\let\origsection=\section \def\section{\@ifstar{\origsection*}{\mysection}} 
\def\mysection{\@startsection{section}{1}\z@{.7\linespacing\@plus\linespacing}{.5\linespacing}{\normalfont\scshape\centering\S}}
\renewcommand{\PrintDOI}[1]{\doi{#1}}
\numberwithin{equation}{section}
\numberwithin{figure}{section}
\let\polishlcross=\l
\def\l{\ifmmode\ell\else\polishlcross\fi}
\def\paragraph#1{%
  \noindent\textbf{#1.}\enspace}
\let\emptyset=\varnothing
\let\setminus=\smallsetminus
\def\moverlay{\mathpalette\mov@rlay}
\def\mov@rlay#1#2{\leavevmode\vtop{   \baselineskip\z@skip \lineskiplimit-\maxdimen
   \ialign{\hfil$\m@th#1##$\hfil\cr#2\crcr}}}
\newcommand{\charfusion}[3][\mathord]{
    #1{\ifx#1\mathop\vphantom{#2}\fi
        \mathpalette\mov@rlay{#2\cr#3}
      }
    \ifx#1\mathop\expandafter\displaylimits\fi}
\DeclareFontFamily{U}  {MnSymbolC}{}
\DeclareSymbolFont{MnSyC}         {U}  {MnSymbolC}{m}{n}
\DeclareFontShape{U}{MnSymbolC}{m}{n}{
    <-6>  MnSymbolC5
   <6-7>  MnSymbolC6
   <7-8>  MnSymbolC7
   <8-9>  MnSymbolC8
   <9-10> MnSymbolC9
  <10-12> MnSymbolC10
  <12->   MnSymbolC12}{}
\DeclareMathSymbol{\powerset}{\mathord}{MnSyC}{180}
\let\epsilon=\varepsilon
\let\rho=\varrho
\let\theta=\vartheta
\theoremstyle{plain}
\newtheorem{thm}{Theorem}[section]
\newtheorem{theorem}[thm]{Theorem}
\newtheorem{lemma}[thm]{Lemma}
\newtheorem{corollary}[thm]{Corollary}
\newtheorem{proposition}[thm]{Proposition}
\newtheorem{problem}[thm]{Problem}
\newtheorem{conjecture}[thm]{Conjecture}
\newtheorem{question}[thm]{Question}
\newtheorem{thm-intro}{Theorem}[]
\newtheorem*{claim*}{Claim}
\theoremstyle{definition}
\newtheorem{remark}[thm]{Remark}
\newtheorem{example}{Example}
\newtheorem*{example*}{Example}
\let\phi=\varphi
\newcommand{\restricted}{\ensuremath{{\upharpoonright}}}
\newcommand{\no}[1]{}
\newcommand{\abs}[1]{\ensuremath{{\lvert {#1} \rvert}}}
\DeclareMathOperator{\Un}{Un}
\DeclareMathOperator{\insh}{in}
\DeclareMathOperator{\outsh}{out}
\begin{document}

\author{J. Pascal Gollin and Karl Heuer}
\address{J. Pascal Gollin, Discrete Mathematics Group, Institute for Basic Science (IBS), 55, Expo-ro, Yuseong-gu, 34126 Daejeon, Republic of Korea}
\email{\tt pascalgollin@ibs.re.kr}
\address{Karl Heuer, Institute of Software Engineering and Theoretical Computer Science, Technische Universit\"{a}t Berlin, Ernst-Reuter-Platz 7, 10587 Berlin, Germany}
\email{\tt karl.heuer@tu-berlin.de}

\title[]{On the Infinite Lucchesi-Younger Conjecture I}
\subjclass[2010]{05C63, 05C20, 05C70; 05C65}
\keywords{Infinite Lucchesi-Younger Conjecture, infinite digraphs, directed cuts, (finitary) dijoin, optimal pair}

\begin{abstract}
    A \emph{dicut} in a directed graph is a cut for which all of its edges are directed to a common side of the cut. 
    A famous theorem of Lucchesi and Younger states that in every finite digraph the least size of an edge set meeting every dicut equals the maximum number of disjoint dicuts in that digraph.

    In this first paper out of a series of two papers, we conjecture a version of this theorem using a more structural description of this min-max property for finite dicuts in infinite digraphs. 
    We show that this conjecture can be reduced to countable digraphs where the underlying undirected graph is $2$-connected, and we prove several special cases of the conjecture.
\end{abstract}

\maketitle

\setcounter{footnote}{1}

\section{Introduction}

In finite structural graph theory there are a lot of theorems which illustrate the dual nature of certain objects 
by relating the maximum number of disjoint objects of a certain type in a graph with the minimal size of an object of another type in that graph.
More precisely, the size of the latter object trivially bounds the number of disjoint objects of the first type existing in the graph.

This duality aspect of such packing and covering results is closely related to the duality of linear programs appearing in combinatorial optimisation.
However, a purely graph theoretic interpretation requires integral solutions of both linear programs, which are hard to detect, if they even exist.

Probably the most well-known example of such a min-max result is Menger's theorem for finite undirected graphs. 
It states that for any two vertex sets~${A, B}$ in a finite graph 
the maximum number of disjoint paths between~$A$ and~$B$  
equals the minimum size of a vertex set separating~$A$ from~$B$. 
In fact, there is a structural reformulation of this quantitative description of this dual nature of connectivity: 
for any two vertex sets~${A, B}$ in a finite graph 
there exists a set of disjoint paths between~$A$ and~$B$ 
together with a vertex set separating~$A$ from~$B$ 
that consists of precisely one vertex from each of the paths. 

While for finite graphs this is an easy corollary from the quantitative version, in infinite graphs it turns out that such a structural version is much more meaningful. 
While Erd{\H{o}}s observed that a version of Menger's theorem based on the equality of infinite cardinals is quite trivial, he conjectured that the analogue of the structural version is the better way to interpret this dual nature of connectivity. 
Such a version has been established by Aharoni and Berger~\cite{inf-menger}. 
Their theorem restored many of the uses of connectivity duality that the trivial cardinality version could not provide, 
and hence it influenced much of the development of infinite connectivity theory and matching theory.

Another such min-max theorem was established by Lucchesi and Younger~\cite{lucc-young_paper} for directed graphs. 
To state that theorem we have to give some definitions first. 

In a weakly connected directed graph~$D$ we call a cut of~$D$ \emph{directed}, or a \emph{dicut} of~$D$, if all of its edges have their head in a common side of the cut. 
We call a set of edges a \emph{dijoin} of~$D$ if it meets every non-empty dicut of~$D$. 
Now we can state the mentioned theorem. 

\begin{theorem}
    \label{thm:LY-fin}
    \cite{lucc-young_paper}
    In every weakly connected finite digraph, the maximum number of disjoint dicuts equals the minimum size of a dijoin.
\end{theorem}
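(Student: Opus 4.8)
The easy direction --- every dijoin has size at least the number of dicuts in any pairwise disjoint family, since it must contain an edge of each of them --- is immediate, so the plan is to prove the reverse inequality by realising both quantities as the optima of a pair of dual linear programs and establishing total dual integrality of the underlying system; this is the Edmonds--Giles / submodular-flow circle of ideas.

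First I would fix the polyhedral model. Orienting each dicut so that its edges all have their head on a distinguished side, one sees that the non-empty dicuts of the weakly connected digraph $D$ are precisely the sets $\delta^-(S)$ of edges entering $S$, as $S$ ranges over the proper non-empty vertex sets with no outgoing edge; call such $S$ \emph{closed}. A routine check gives that the closed sets are closed under intersection and union, and in fact that $\mathbf{1}_{\delta^-(S)}+\mathbf{1}_{\delta^-(T)}=\mathbf{1}_{\delta^-(S\cap T)}+\mathbf{1}_{\delta^-(S\cup T)}$ for all closed $S,T$. A minimum dijoin is then an integral point of $P:=\{x\in\RR^{E(D)}_{\ge 0}:x(\delta^-(S))\ge 1\text{ for every closed }S\}$ minimising $\sum_e x_e$, whose LP dual asks for $y\ge 0$ indexed by the closed sets with $\sum_{S:\,e\in\delta^-(S)}y_S\le 1$ for each edge $e$, maximising $\sum_S y_S$. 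The point of the reformulation is that an \emph{integral} optimal $y$ is forced to take values in $\{0,1\}$ (every non-empty dicut has an edge that is covered at most once), so it is exactly a family of pairwise disjoint dicuts of maximum size.

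The crux is to show the system defining $P$ is totally dual integral: for every integral $c\ge 0$, the program $\max\{\sum_S y_S:\ \sum_{S:\,e\in\delta^-(S)}y_S\le c_e,\ y\ge 0\}$ has an integral optimum. I would choose an optimal $y$ minimising the potential $\sum_S y_S\abs{S}^2$ over the optimal face (a polytope, so a minimiser exists) and show that its support is laminar. If two closed sets $S,T$ in the support cross, shifting weight $\min(y_S,y_T)$ from $\{S,T\}$ to $\{S\cap T,S\cup T\}$ leaves $y$ dual-feasible --- this is exactly where the edgewise identity above is used --- leaves $\sum_S y_S$ unchanged, and strictly decreases the potential, because $\abs{S\cap T}+\abs{S\cup T}=\abs{S}+\abs{T}$ while $\abs{S\cap T}^2+\abs{S\cup T}^2>\abs{S}^2+\abs{T}^2$ by strict convexity of $t\mapsto t^2$; this contradicts the choice of $y$. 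Finally, for a laminar family $\mathcal{F}$ of closed sets the dicut/edge incidence matrix is totally unimodular: the column of an edge $(x,y)$ is supported on the sets of $\mathcal{F}$ that contain $y$ but not $x$, which form a downward-closed segment of the chain of members of $\mathcal{F}$ containing $y$, so (after suitably rooting the forest that encodes $\mathcal{F}$) the matrix is a network matrix. Hence the dual restricted to this laminar support has an integral optimal solution, which is globally optimal; this proves total dual integrality.

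Applying this with right-hand side $\mathbf{1}$ then gives $\nu(D)$ pairwise disjoint dicuts attaining the LP value $\nu^{\ast}(D)$, while total dual integrality together with integrality of the right-hand side forces $P$ itself to be an integral polyhedron (Hoffman--Edmonds--Giles), so $\tau(D)=\tau^{\ast}(D)$; LP strong duality $\tau^{\ast}(D)=\nu^{\ast}(D)$ closes the chain $\tau(D)=\tau^{\ast}(D)=\nu^{\ast}(D)=\nu(D)$. I expect the genuine difficulty to lie in this uncrossing step --- getting the potential-minimising dual solution to be laminar and then seeing that a laminar family of dicuts carries a totally unimodular (network) incidence matrix; by contrast, weak duality, the ring-family structure of closed sets, LP duality, and the implication ``total dual integrality with integral right-hand side $\Rightarrow$ integral polyhedron'' are all standard.
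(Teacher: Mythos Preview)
Your argument is sound: the edgewise identity for closed sets, the potential-minimising uncrossing to a laminar support, total unimodularity of the laminar dicut--edge incidence matrix, and the Edmonds--Giles implication from TDI with integral right-hand side to an integral polyhedron all combine correctly to yield~${\tau(D)=\nu(D)}$.

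Note, however, that the paper does not itself prove Theorem~\ref{thm:LY-fin}; it quotes the result from the literature, pointing to the original combinatorial proof of Lucchesi and Younger, Lov\'asz's inductive proof, and Frank's algorithmic proof via the negative-circuit method. Your polyhedral route through total dual integrality is yet another well-known proof (essentially the Edmonds--Giles submodular-flow argument specialised to dicuts). It has the advantage of sitting inside a broad framework and of being short once the machinery is in place; the cited proofs are more self-contained and combinatorial, and they directly produce a \emph{nested} family of dicuts, i.e.\ Theorem~\ref{thm:LY-fin-nested}, which is what the paper actually invokes in Section~\ref{sec:cases}. Your proof does this too, though you do not say so explicitly: the uncrossing step forces the support of the optimal dual to be a laminar family of closed sets, and the integral optimum obtained from the totally unimodular restricted system has its support inside that same laminar family, hence the resulting disjoint dicuts are pairwise nested.
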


Beside the original proof of Theorem~\ref{thm:LY-fin} due to 
Lucchesi and Younger~\cite{lucc-young_paper}, further ones appeared. 
Among them are 
an inductive proof by Lov\'asz~\cite{lovasz}*{Thm.~2} and 
an algorithmic proof of Frank~\cite{frank_book}*{Section~9.7.2}. 
As for Menger's Theorem, we now state a structural reformulation of Theorem~\ref{thm:LY-fin}, which for finite digraphs is easily seen to be equivalent.

\begin{theorem}
    \label{thm:LY-fin-structural}
    Let~$D$ be a finite weakly connected digraph. 
    Then there exists a tuple~${(F, \mathcal{B})}$ such that the following statements hold.
    \begin{enumerate}[label=(\roman*)]
        \item {${\mathcal{B}}$ is a set of disjoint dicuts of~$D$.}
        \item {${F \subseteq E(D)}$ is a dijoin of~$D$.}
        \item {${F \subseteq \bigcup \mathcal{B}}$.}
        \item {${\abs{F \cap B} = 1}$ for every~${B \in \mathcal{B}}$.}
    \end{enumerate}
\end{theorem}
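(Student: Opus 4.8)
The plan is to derive the structural statement directly from the quantitative version, Theorem~\ref{thm:LY-fin}, by a short pigeonhole argument; no genuinely new combinatorial input is required. Throughout I would adopt the convention (under which Theorem~\ref{thm:LY-fin} is stated) that ``dicut'' means non-empty dicut, so that the two quantities compared there are finite and well defined — recall that in a weakly connected digraph any cut with both sides non-empty contains an edge.

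First I would fix a family~$\mathcal{B}$ of pairwise edge-disjoint dicuts of~$D$ of maximum cardinality, and write $k := \abs{\mathcal{B}}$; such a family exists and is finite because $E(D)$ is finite. By Theorem~\ref{thm:LY-fin}, this same number~$k$ equals the minimum size of a dijoin of~$D$, so I would next fix a dijoin $F \subseteq E(D)$ with $\abs{F} = k$. With these choices, items~\rmlabel\ and the analogue for~$F$ of the statement hold by construction, i.e.\ (i) and (ii) are immediate, and it remains only to verify (iii) and (iv).

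For this, note that since $F$ is a dijoin it meets every (non-empty) dicut, so in particular $F \cap B \neq \emptyset$ for each $B \in \mathcal{B}$. As the members of~$\mathcal{B}$ are pairwise disjoint subsets of~$E(D)$, this yields
\[
    k \;=\; \abs{F} \;\geq\; \Bigl\lvert F \cap \bigcup \mathcal{B} \Bigr\rvert \;=\; \sum_{B \in \mathcal{B}} \abs{F \cap B} \;\geq\; \abs{\mathcal{B}} \;=\; k ,
\]
so equality holds everywhere. From $\sum_{B \in \mathcal{B}} \abs{F \cap B} = \abs{\mathcal{B}}$ together with $\abs{F \cap B} \geq 1$ for every~$B$ one gets $\abs{F \cap B} = 1$ for all $B \in \mathcal{B}$, which is~(iv); and from $\abs{F} = \bigl\lvert F \cap \bigcup \mathcal{B}\bigr\rvert$, together with $F \cap \bigcup\mathcal{B} \subseteq F$ and finiteness, one gets $F \subseteq \bigcup \mathcal{B}$, which is~(iii). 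Then $(F, \mathcal{B})$ is the required tuple.

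I do not expect a real obstacle here; the only points needing a little care are the degenerate cases (for instance $D$ a single vertex, where $k = 0$ and one simply takes $F = \emptyset$ and $\mathcal{B} = \emptyset$) and the non-emptiness convention for dicuts, which is already built into Theorem~\ref{thm:LY-fin}. It is worth adding the reverse implication to justify the claim of equivalence for finite digraphs: given any tuple $(F,\mathcal{B})$ as in the statement, (i) and (ii) force the maximum number of disjoint dicuts to be at least the minimum dijoin size is at least~$\abs{\mathcal{B}}$ — more precisely, $\abs{\mathcal{B}}$ disjoint dicuts give a lower bound on the max, while (iii) and (iv) show $\abs{F} = \abs{\mathcal{B}}$, so $F$ is a dijoin whose size matches, pinning both quantities to~$\abs{\mathcal{B}}$ and recovering Theorem~\ref{thm:LY-fin}.
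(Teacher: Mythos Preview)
Your argument is correct and is precisely the ``easily seen to be equivalent'' step the paper alludes to without spelling out: take a maximum packing~$\mathcal{B}$ and a minimum dijoin~$F$, use Theorem~\ref{thm:LY-fin} to get $\abs{F}=\abs{\mathcal{B}}$, and then the pigeonhole inequality chain forces~(iii) and~(iv). The paper gives no further proof of this statement, so there is nothing to compare beyond noting that your write-up is exactly the intended derivation; your added remark on the reverse implication (showing the equivalence, not just the one-way deduction) is a welcome completeness check, though the sentence ``force the maximum number of disjoint dicuts to be at least the minimum dijoin size is at least~$\abs{\mathcal{B}}$'' is garbled and should be rewritten.
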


In this paper 
we consider the question whether Theorem~\ref{thm:LY-fin-structural} extends to infinite digraphs. 
Let us first show that a direct extension of this formulation to arbitrary infinite digraphs fails. 
To do this we define a \emph{double ray} to be an undirected two-way infinite path.
Now consider the digraph depicted in Figure~\ref{fig:counterexample}. 
Its underlying graph is the Cartesian product of a double ray with an edge. 
Then we consistently orient all edges corresponding to one copy of the double ray in one direction and all edges of the other copy in the different direction. 
Finally, we direct all remaining edges such that they have their tail in the same copy of the double ray. 
This digraph contains no finite dicut, but it does contain infinite ones. 
Note that every dicut of this digraph contains at most one horizontal edge, which corresponds to an oriented one of some copy of the double ray, and all vertical edges to the left of some vertical edge. 
Hence, we cannot even find two disjoint dicuts. 
However, a dijoin of this digraph cannot be finite, as we can easily find a dicut avoiding any finite set of edges by considering a horizontal edge to the left of the finite set. 
So we obtain that each dijoin hits every dicut infinitely often in this digraph.
Therefore, neither the statement of Theorem~\ref{thm:LY-fin-structural} nor the statement of Theorem~\ref{thm:LY-fin} remain true if we consider arbitrary dicuts in infinite digraphs. 

\begin{figure}[htbp]
    \centering
    \includegraphics{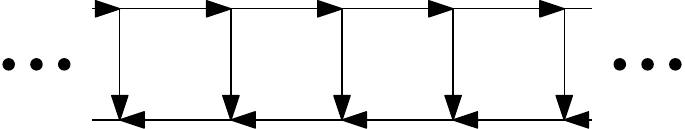}
    \caption{A counterexample to an extension of Theorem~\ref{thm:LY-fin-structural} to infinite digraphs where infinite dicuts are considered too.}
    \label{fig:counterexample}
\end{figure}

Another counterexample for these naive extensions is the infinite transitive tournament with vertex set~$\mathbb{N}$ and with an edge directed from~$m$ to~$n$ if and only if~$m$ is smaller than~$n$ for all~${m, n \in \mathbb{N}}$.
We leave the verification of this fact to the reader. 

In order to overcome the problem of this example let us again consider the situation in Menger's theorem. 
There, even in the infinite version, we are only considering finite paths for those objects that we want to pack.
Together with the example in Figure~\ref{fig:counterexample}, this suggests that we might need to restrict our attention to finite dicuts when extending Theorem~\ref{thm:LY-fin-structural} to infinite digraphs. 
Hence, we make the following definitions.

In a weakly connected digraph~$D$ we call an edge set~${F \subseteq E(D)}$ a \emph{finitary dijoin} of~$D$ if it intersects every non-empty finite dicut of~$D$. 
Building up on this definition, we call a tuple~${(F, \mathcal{B})}$ as in Theorem~\ref{thm:LY-fin-structural} but where~$F$ is now a finitary dijoin and~$\mathcal{B}$ is a set of disjoint finite dicuts of~$D$, an \emph{optimal pair} for~$D$. 

Not in contradiction to the example given above, we now state the following conjecture raised by the second author of this article, which we call the Infinite Lucchesi-Younger Conjecture.

\begin{conjecture}
    \label{conj:LY-inf}
    Every weakly connected digraph admits an optimal pair.
\end{conjecture}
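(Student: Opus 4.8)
The plan is to reduce Conjecture~\ref{conj:LY-inf} in two steps to the case of a countable digraph whose underlying undirected graph is $2$-connected, and then to attack that core case directly, while meanwhile establishing it for several natural subclasses of digraphs.

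The finite case is precisely Theorem~\ref{thm:LY-fin-structural}, so the first task is to reduce to digraphs with $2$-connected underlying graph. If $e$ is a cut edge of $D$, then $\{e\}$ is a finite dicut, so $e$ lies in every finitary dijoin; and singleton dicuts arising from distinct cut edges are automatically disjoint, so such edges can be placed in both coordinates of an optimal pair for free. More generally, every bond of a weakly connected graph lies inside a single block, and this persists for dicuts; conversely a dicut of a block extends to a dicut of $D$ of the same size by assigning every other block to the side of the cut containing its attaching cut vertex. Hence, given an optimal pair $(F_i,\mathcal B_i)$ for each block $D_i$ of $D$ (the $K_2$-blocks being handled as above), the pair $\big(\bigcup_i F_i,\ \bigcup_i \mathcal B_i\big)$ is an optimal pair for $D$: properties~(iii) and~(iv) are inherited blockwise since distinct blocks share no edges, property~(i) holds because the $\mathcal B_i$ live on pairwise disjoint edge sets, and property~(ii) holds because every non-empty finite dicut of $D$ restricts to a non-empty finite dicut of some block $D_i$, hence is met by $F_i$. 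This reduces the conjecture to $2$-connected $D$.

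For the second reduction, to countable $D$, I would use an elementary submodel (L\"owenheim--Skolem) argument: take a countable elementary submodel $M$ of a sufficiently rich fragment of set theory with $D\in M$, let $D'$ be the subdigraph of $D$ induced on the vertices lying in $M$, and verify that $D'$ is again weakly connected with $2$-connected underlying graph and that its finite dicuts are exactly the traces on $D'$ of finite dicuts of $D$. An optimal pair of $D'$, supplied by the countable case, would then be lifted to $D$; the content of the lift is that $M$, by elementarity, contains a representative edge for each relevant finite dicut. The point requiring care is that the chosen finitary dijoin of $D'$ must meet \emph{every} finite dicut of $D$, not only those reflected by $M$; iterating the construction along an $\in$-chain of submodels, or arranging $M$ to additionally reflect the collection of all finite dicuts, should close this gap.

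The heart of the matter --- and the step I expect to be the main obstacle --- is the countable $2$-connected case, which carries the full strength of the conjecture. Here the natural strategy is an exhaustion: enumerate the countably many finite dicuts of $D$ and build $(F,\mathcal B)$ by recursion, at each stage either adding a new dicut to $\mathcal B$ with a representative edge into $F$ or certifying that the current $F$ already meets the dicut under consideration, using a compactness argument on finite subdigraphs --- fed by Theorem~\ref{thm:LY-fin-structural} --- to guarantee that the recursion never gets stuck. The genuine difficulty is the tension between the packing and covering halves: one must keep the members of $\mathcal B$ pairwise disjoint and every $F$-representative confined to $\bigcup\mathcal B$, yet also ensure that $F$ eventually meets all finite dicuts, including those that only become relevant at later stages. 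Failing a resolution of this tension in general, I would settle the conjecture for tractable classes --- for instance digraphs with only finitely many finite dicuts (where the recursion terminates), locally finite digraphs, and digraphs admitting a normal spanning tree (where the tree structure restricts how finite dicuts can overlap) --- which also serve as a check that the two reductions above are set up correctly.
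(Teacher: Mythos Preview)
First, note that this statement is a conjecture which the paper does not prove; what the paper establishes is the reduction Theorem~\ref{thm:reduction} and the special cases of Theorem~\ref{thm:LY-inf-cases}. Your proposal is likewise a strategy rather than a proof, and you correctly flag the countable $2$-connected case as the open core. Your block decomposition is essentially the paper's Lemma~\ref{lem:2-con-L-Y}.

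The genuine gap is in your countability step. A countable elementary submodel $M$ only reflects countably many finite dicuts of $D$, and an optimal pair for $D' = D \cap M$ has no reason to meet the (possibly uncountably many) finite dicuts of $D$ invisible to $M$; iterating along a chain of submodels does not help, since a countable chain still misses dicuts while an uncountable chain returns you to an uncountable gluing problem with no coherence between stages. Concretely, take vertices $a,b$ joined by uncountably many edges in each direction, together with $\aleph_1$ many sinks $c_\alpha$ each receiving one edge from $a$ and one from $b$: this is $2$-connected, its finite dibonds are the pairwise disjoint sets $\{ac_\alpha, bc_\alpha\}$, and any countable $D'$ produces a dijoin missing almost all of them. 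The paper's route avoids this by first passing to the quotient $D/{\sim}$, where $v \sim w$ iff no finite dicut separates them. This quotient has exactly the same finite dibonds as $D$ (Proposition~\ref{prop:quotient}), so optimal pairs transfer both ways (Lemma~\ref{lem:fin-disep}); and $D/{\sim}$ is finitely diseparable, hence finitely separable, whence a star--comb argument (Lemma~\ref{lem:2-block}) forces every $2$-block to be countable. Countability is thus a \emph{consequence} of finite diseparability after quotienting, not an independent reduction, and this quotient is the idea your plan is missing.
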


Apparently, an extension of Theorem~\ref{thm:LY-fin} as in Conjecture~\ref{conj:LY-inf} turns out to be very similar to a more general problem about infinite hypergraphs independently raised by Aharoni~\cite{aharoni-conj}*{Prob.~6.7}. 
We will discuss this connection further in Section~\ref{sec:hypergraphs}.

\vspace{0.2cm}

The three mentioned proofs 
\cite{lucc-young_paper}
\cite{lovasz}*{Thm.~2} 
\cite{frank_book}*{Thm.~9.7.2} 
of Theorem~\ref{thm:LY-fin} even show a slightly stronger result.
We call an optimal pair \emph{nested} if the elements of~$\mathcal{B}$ are pairwise nested, 
i.e.~any two finite dicuts~${E(X_1, X_2), E(Y_1, Y_2) \in \mathcal{B}}$ satisfy one of the following conditions: 
${X_1 \subseteq Y_1}$, 
${Y_1 \subseteq X_1}$, 
${X_1 \subseteq Y_2}$, or 
${Y_2 \subseteq X_1}$.

\begin{theorem}
    \label{thm:LY-fin-nested}
    \cite{lucc-young_paper}
    Every weakly connected finite digraph admits a nested optimal pair.
\end{theorem}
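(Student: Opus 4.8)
The plan is to derive this from Theorem~\ref{thm:LY-fin} by a standard uncrossing argument applied to the packing side. Write $n := \abs{V(D)}$ and let $\tau$ be the minimum size of a dijoin of $D$; by Theorem~\ref{thm:LY-fin} there exist $\tau$ pairwise disjoint dicuts, and no more. Each nonempty dicut I would encode by its \emph{head side}: the unique set $\emptyset \neq X \subsetneq V(D)$ with $d^{+}(X) = 0$ whose set $\partial^{-}(X)$ of entering edges is that dicut (weak connectedness forces $\partial^{-}(X) \neq \emptyset$ whenever $d^{+}(X) = 0$ and $\emptyset \neq X \neq V(D)$, and one checks that $X$ is determined by the dicut). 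Unwinding the definition, a set $\cB$ of pairwise disjoint dicuts is nested exactly when the family $\{X_{B} : B \in \cB\}$ of head sides is cross-free, two sets $X, Y$ being said to \emph{cross} when all four of $X \cap Y$, $X \sm Y$, $Y \sm X$ and $V(D)\sm(X\cup Y)$ are nonempty.

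Among the finitely many sets of $\tau$ pairwise disjoint dicuts of $D$, I would fix one, $\cB$, maximising $\Phi(\cB) := \sum_{B \in \cB} \abs{X_{B}}^{2}$, and argue $\cB$ must be nested. The tool is an uncrossing identity: whenever $d^{+}(X) = d^{+}(Y) = 0$, inspecting how edges can run among the four parts $X\cap Y$, $X\sm Y$, $Y\sm X$, $V(D)\sm(X\cup Y)$ (no edge leaves $X$, none leaves $Y$) shows $d^{+}(X\cap Y) = d^{+}(X\cup Y) = 0$ and, as multisets of edges,
\[
  \partial^{-}(X) \uplus \partial^{-}(Y) \ =\ \partial^{-}(X\cap Y) \uplus \partial^{-}(X\cup Y).
\]
In particular, if $\partial^{-}(X)$ and $\partial^{-}(Y)$ are disjoint then so are $\partial^{-}(X\cap Y)$ and $\partial^{-}(X\cup Y)$, with union $\partial^{-}(X) \cup \partial^{-}(Y)$; and if $X, Y$ cross, then $X\cap Y$ and $X\cup Y$ are proper nonempty subsets of $V(D)$, so these two sets are again head sides of (nonempty) dicuts.

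Now suppose $\cB$ were not nested, say $B_{1}, B_{2} \in \cB$ have head sides $X, Y$ that cross. As $B_{1}$ and $B_{2}$ are disjoint, the previous paragraph lets me replace $\{B_{1}, B_{2}\}$ by the two disjoint dicuts with head sides $X\cap Y$ and $X\cup Y$, yielding a set $\cB'$ of $\tau$ pairwise disjoint dicuts: the two new dicuts are distinct, and since their union equals $B_{1}\cup B_{2}$ they are also disjoint from every other member of $\cB$. Putting $p := \abs{X}$, $q := \abs{Y}$, $a := \abs{X\cap Y}$ and $\abs{X\cup Y} = p+q-a$, a short computation gives $\Phi(\cB') - \Phi(\cB) = 2(p-a)(q-a) > 0$, the strict inequality because $X\sm Y$ and $Y\sm X$ are nonempty; this contradicts the maximality of $\Phi(\cB)$. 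Hence $\cB$ is nested, and then for any minimum dijoin $F$ we have $\abs{F} = \tau$ and $\abs{F\cap B} \geq 1$ for each of the $\tau$ pairwise disjoint $B \in \cB$, so $\sum_{B\in\cB}\abs{F\cap B} \leq \abs{F} = \tau = \abs{\cB}$ forces $\abs{F\cap B} = 1$ for all $B$ and $F \subseteq \bigcup\cB$. Since every dicut of a finite digraph is finite and every dijoin is in particular a finitary dijoin, $(F,\cB)$ is a nested optimal pair.

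The step I expect to be the main obstacle is the bookkeeping in the uncrossing move: one must verify that the two replacement sets really are head sides of nonempty dicuts (this is where weak connectedness enters a second time), that disjointness survives not only within the new pair but also against every untouched member of $\cB$, and that the potential $\Phi$ is chosen so that a crossing pair forces a strict increase. The multiset identity above — proved by a short case analysis of which edges can exist between the four parts of $V(D)$ — is the technical heart that makes all of this work. (Equivalently, this is the statement that the dicut-covering linear system is totally dual integral and admits a cross-free integral optimal dual solution, which is exactly what the cited proofs of Theorem~\ref{thm:LY-fin} already yield.)
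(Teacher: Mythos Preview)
The paper does not give its own proof of this theorem; it is stated with a citation to~\cite{lucc-young_paper}, and the surrounding text merely remarks that the three cited proofs of Theorem~\ref{thm:LY-fin} already yield the nested version. The closest the paper comes to an argument is the comment in Section~\ref{sec:examples} that in the finite case one can ``iteratively `uncross' all dicuts of~$\mathcal{B}$'', pointing to the proof of Lemma~\ref{lem:fin_parameter_equi} for an illustration of the technique.

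Your proof is correct and is precisely such an uncrossing argument. The key ingredients --- that corners of dicuts are again dicuts, and the multiset identity $\partial^-(X) \uplus \partial^-(Y) = \partial^-(X\cap Y) \uplus \partial^-(X\cup Y)$ --- are exactly Remarks~\ref{rem:corners1} and~\ref{rem:corners2} in the paper. Your claim that the head side is uniquely determined by the dicut is correct in a weakly connected digraph (components of $D-B$ are forced to one side by the direction of the incident edges of $B$), and the potential $\Phi$ does the job. One small point you leave implicit: the two replacement dicuts are not only disjoint from but also distinct from every other member of $\mathcal{B}$, but this is immediate since they are nonempty and contained in $B_1 \cup B_2$.

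The only methodological difference worth noting is that the paper's own uncrossing illustration (in the proof of Lemma~\ref{lem:fin_parameter_equi}) chooses a crossing pair so as to minimise the number of remaining crossings, whereas you use the sum-of-squares potential. Both are standard and equally valid; your version has the virtue of being slightly cleaner to verify, while the paper's version generalises more directly to the corner-closed classes $\mathfrak{B}$ treated there.
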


Hence, we also make the following conjecture.

\begin{conjecture}
    \label{conj:LY-inf-nested}
    Every weakly connected digraph admits a nested optimal pair.
\end{conjecture}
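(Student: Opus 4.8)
Since Conjecture~\ref{conj:LY-inf-nested} is stated as a conjecture, the realistic target — and presumably what is carried out — is the chain of reductions announced in the abstract together with several unconditional special cases, so the plan I describe is a program rather than a complete argument, built around the two natural reduction steps and an uncrossing argument.

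First I would reduce to countable digraphs whose underlying undirected graph is $2$-connected. For countability: fix a sufficiently large $H(\theta)$ and a continuous increasing chain $(M_\alpha)_{\alpha<\kappa}$ of countable elementary submodels with $D \in M_0$ and $\bigcup_\alpha M_\alpha \supseteq V(D) \cup E(D)$, and put $D_\alpha := D \restricted M_\alpha$ (the subdigraph on the vertices and edges in $M_\alpha$). One then builds a nested optimal pair of $D$ by transfinite recursion on $\alpha$: at successors, extend a nested optimal pair of $D_\alpha$ to one of $D_{\alpha+1}$ using the countable case, and at limits take unions. Elementarity is what makes this work: every finite dicut of $D$ is already a finite dicut of some $D_\alpha$ (its shores are determined by the finitely many edges it meets), so being a finitary dijoin is an ``internal'' property, and disjointness and nestedness of the dicuts are preserved under unions. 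For $2$-connectedness: decompose $D$ along its cut vertices into blocks; a finite dicut of $D$ breaks up into finite dicuts of the blocks it meets and, conversely, compatible finite dicuts of the blocks reassemble into one of $D$, so a nested optimal pair of $D$ can be glued together from nested optimal pairs of the blocks, with a little care at the cut vertices to keep the ``one $F$-edge per dicut'' condition and nestedness intact.

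Second, within the reduced setting I would aim to derive nestedness from a (not necessarily nested) optimal pair by uncrossing, exactly as in the finite proofs of Theorem~\ref{thm:LY-fin-nested}. If two members of $\mathcal{B}$ are crossing finite dicuts $E(X_1,X_2)$ and $E(Y_1,Y_2)$, oriented so that their heads lie on the $X_1$- resp.\ $Y_1$-side, replace them by the finite dicuts supported on the edge boundaries of $X_1 \cap Y_1$ and of $X_1 \cup Y_1$; a short check (using that $\mathcal{B}$ is disjoint) shows these are again dicuts with the correct orientation, that their union is contained in the union of the original two, that they are still disjoint from each other and from the rest of $\mathcal{B}$, and that the two chosen $F$-edges can be redistributed, one to each new dicut. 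Iterating this over all crossing pairs — transfinitely, using a well-founded potential that bounds how often a fixed edge can be displaced — should converge to a nested family while $F$ stays a finitary dijoin meeting each member exactly once.

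The principal obstacle, I expect, is the interaction of this transfinite construction with the dijoin condition. There is no compactness for finitary dijoins — the example of Figure~\ref{fig:counterexample} shows that allowing infinite dicuts destroys every version of the theorem, and even with only finite dicuts a finitary dijoin can be forced to be infinite with no bound on its size — so one cannot simply take a limit of optimal pairs and expect the limit to be an optimal pair: at a limit stage $F$ may miss a finite dicut that only becomes visible later, a later uncrossing may undo an earlier one, or it may strand an $F$-edge outside $\bigcup\mathcal{B}$. Consequently I would first secure the cases where this difficulty evaporates: digraphs with only finitely many finite dicuts (which contract to a finite instance of Theorem~\ref{thm:LY-fin-nested}), digraphs admitting a \emph{finite} finitary dijoin, and digraphs whose finite dicuts are all controlled by a common finite separator — in each of these the whole construction becomes essentially finitary, the uncrossing terminates after finitely many steps, and Theorem~\ref{thm:LY-fin-nested} applies more or less directly.
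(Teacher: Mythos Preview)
You are right that the paper does not prove the conjecture and instead establishes reductions and special cases; but both of your main steps diverge from what the paper does, and each contains a genuine gap.

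Your countability reduction via an elementary-submodel chain hinges on the successor step ``extend a nested optimal pair of $D_\alpha$ to one of $D_{\alpha+1}$ using the countable case''. This is a relative extension problem that is strictly stronger than the countable instance of the conjecture you are assuming: the conjecture gives existence, not extendibility, and there is no reason a nested optimal pair for $D_\alpha$ should extend at all. (There is also the issue that finite dicuts of the induced subdigraph $D_\alpha$ need not be dicuts of $D$, so the family $\mathcal{B}$ you assemble along the chain may not consist of dicuts of the limit.) The paper avoids this entirely: it quotients $D$ by the relation ``no finite dibond separates $v$ from $w$'', obtaining a finitely diseparable digraph with the same finite dibonds as $D$, so that (nested) optimal pairs transfer verbatim in both directions (Lemma~\ref{lem:fin-disep}); every $2$-block of a finitely diseparable digraph is then automatically countable (Lemma~\ref{lem:2-block}), and the $2$-block decomposition (Lemma~\ref{lem:2-con-L-Y}) finishes the reduction.

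Your second step --- deriving nestedness from a non-nested optimal pair by transfinite uncrossing while keeping the dijoin $F$ fixed --- is blocked by a concrete obstruction that the paper exhibits. Section~\ref{sec:examples} (Examples~\ref{ex:zackenstrahl} and~\ref{ex:stufengitter}) constructs finitary dijoins $F$ that belong to some optimal pair but to \emph{no} nested optimal pair whatsoever; hence no rearrangement of $\mathcal{B}$ alone can succeed, and the dijoin itself must change. This is exactly why the paper treats Conjecture~\ref{conj:LY-inf-nested} as potentially strictly stronger than Conjecture~\ref{conj:LY-inf}. Accordingly, the special cases in Theorem~\ref{thm:LY-inf-cases} are not obtained by uncrossing an existing pair but by direct compactness arguments (Lemmas~\ref{lem:fin-parameter-compactness} and~\ref{L-Y-fin_many_fin_dibonds}) that manufacture a nested pair from Theorem~\ref{thm:LY-fin-nested} applied to finite contraction minors.
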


In weakly connected infinite digraphs there are indications that, in contrast to the finite case, Conjecture~\ref{conj:LY-inf-nested} may be strictly stronger than Conjecture~\ref{conj:LY-inf}. 
In Section~\ref{sec:examples} we will illustrate examples of digraphs with a finitary dijoin which is part of an optimal pair, but not of any nested one. 

\vspace{0.2cm}

One of the main results of this paper is the reduction of Conjectures~\ref{conj:LY-inf} and~\ref{conj:LY-inf-nested} to countable digraphs with a certain separability property and whose underlying multigraphs are $2$-connected. 
We call a digraph~$D$ \emph{finitely diseparable} if for any two vertices~${v, w \in V(D)}$ there is a finite dicut of~$D$ such that~$v$ and~$w$ lie in different sides of that finite dicut. 

\begin{theorem}
    \label{thm:reduction}
    If Conjecture~\ref{conj:LY-inf} (or Conjecture~\ref{conj:LY-inf-nested}, respectively) holds for all countable finitely diseparable digraphs whose underlying multigraphs are $2$-connected, then Conjecture~\ref{conj:LY-inf} (or Conjecture~\ref{conj:LY-inf-nested}, respectively) holds for all weakly connected digraphs. 
\end{theorem}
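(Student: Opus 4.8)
The plan is to prove Theorem~\ref{thm:reduction} by composing three reductions, each of which, read backwards, converts an optimal pair (a nested optimal pair, respectively) of the reduced digraph into one of the original digraph: a reduction to \emph{finitely diseparable} digraphs, a reduction to \emph{countable} digraphs, and a reduction to digraphs whose underlying multigraph is \emph{$2$-connected}. Since contracting vertex sets, passing to suitable subdigraphs, and passing to blocks all carry nested families of finite dicuts to nested families, every step preserves nestedness, so Conjectures~\ref{conj:LY-inf} and~\ref{conj:LY-inf-nested} are handled in parallel; weak connectivity is preserved throughout, so I always restrict attention to weakly connected digraphs.

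\textbf{Step 1: reduction to finitely diseparable digraphs.} On $V(D)$ declare $v\sim w$ when no finite dicut of $D$ separates $v$ from $w$; this is an equivalence relation, since a finite dicut separating $u$ from $w$ must also separate $v$ from $u$ or from $w$. Let $D'$ be obtained from $D$ by contracting each $\sim$-class. Then $D'$ is weakly connected and finitely diseparable, and the identity on edge sets is a bijection between the nonempty finite dicuts of $D$ and those of $D'$: every finite dicut of $D$ separates the ends of each of its edges, hence avoids all classes and its sides project to a bipartition of $V(D')$, while finite dicuts of $D'$ pull back. Consequently finitary dijoins, disjoint families of finite dicuts, and conditions (iii)--(iv) of Theorem~\ref{thm:LY-fin-structural} all transfer, so an optimal pair of $D'$ yields one of $D$. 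It therefore suffices to treat weakly connected finitely diseparable digraphs.

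\textbf{Step 2: reduction to countable digraphs.} This is the technical core, and what I expect to be the main obstacle. Assuming the conjectures for all countable finitely diseparable digraphs, let $D$ be weakly connected and finitely diseparable; we may assume $\abs{V(D)}$ uncountable and induct on $\abs{V(D)}$. Fix a large regular $\theta$ and a continuous increasing chain $(M_\alpha)_{\alpha<\cf(\abs{V(D)})}$ of elementary submodels of $(H_\theta,\in)$ with $D\in M_0$, $\abs{M_\alpha}<\abs{V(D)}$, and $\bigcup_\alpha(M_\alpha\cap V(D))=V(D)$; put $D_\alpha:=D[M_\alpha\cap V(D)]$. By elementarity each $D_\alpha$ is weakly connected and finitely diseparable, both being witnessed by finite configurations that reflect into $M_\alpha$; a more delicate point is to understand the finite dicuts of $D_\alpha$ in terms of those of $D$, since restricting to $D_\alpha$ could a priori introduce finite dicuts with no counterpart in $D$, and ruling this out (or working around it) uses elementarity more seriously. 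By the inductive hypothesis each $D_\alpha$ has an optimal pair; the crux is to choose these \emph{coherently}, so that the restriction maps send the pair for $D_\beta$ to that for $D_\alpha$ whenever $\alpha<\beta$, after which the union over $\alpha$ is an optimal pair of $D$. Securing this coherence — turning a purely existential countable hypothesis into a family of compatible optimal pairs along an uncountable chain — is where the real work lies.

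\textbf{Step 3: reduction to $2$-connected underlying multigraphs.} Let $D$ now be weakly connected, countable, and finitely diseparable, and decompose its underlying multigraph into blocks. A single-edge block $\{e\}$ is immediate: $e$ is a finite dicut of $D$, and $(\{e\},\{\{e\}\})$ is a nested optimal pair of that block. Every other block $B$ is weakly connected, countable, $2$-connected, and finitely diseparable — for the last property, a finite dicut of $D$ separating two vertices of $B$ restricts to the nonempty finite dicut $C\cap E(B)$ of $B$, which still separates them. The key lemma is a dictionary between the finite dicuts of $B$ and of $D$: via the ``gate'' map that sends each vertex of $D$ to the nearest vertex of $B$ in the block-cut tree, every finite dicut of $B$ extends, with the same edge set, to a finite dicut of $D$; conversely, for every edge $e$ of $B$, each finite dicut of $D$ containing $e$ restricts to a nonempty finite dicut of $B$ contained in it. Granting the conjectures for all blocks, choose optimal pairs $(F_B,\mathcal{B}_B)$ and set $F:=\bigcup_B F_B$ and $\mathcal{B}:=\bigcup_B\mathcal{B}_B$. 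The dictionary then shows that $F$ meets every nonempty finite dicut of $D$, that $\mathcal{B}$ is a family of pairwise disjoint finite dicuts of $D$ (blocks being edge-disjoint), and that conditions (iii)--(iv) of Theorem~\ref{thm:LY-fin-structural} hold; moreover dicuts from distinct blocks are automatically nested, so $\mathcal{B}$ is nested. Composing Steps~1--3 proves Theorem~\ref{thm:reduction}: Steps~1 and~3 are essentially bookkeeping once the dicut correspondences are set up, while the weight of the argument rests on Step~2.
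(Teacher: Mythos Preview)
Your Steps~1 and~3 are essentially the paper's approach: quotient by the relation ``not separated by any finite dicut'' to obtain a finitely diseparable digraph (this is the paper's Lemma~\ref{lem:fin-disep} together with Proposition~\ref{prop:quotient}), then decompose into $2$-blocks and piece optimal pairs back together (the paper's Lemma~\ref{lem:2-con-L-Y} and Remark~\ref{rem:bonds-in-blocks}). Those parts are fine.

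The genuine gap is Step~2. You have identified it as the ``technical core'' and the ``main obstacle'', and you propose an elementary-submodel/L\"owenheim--Skolem style argument along a continuous chain, while conceding that the coherence of optimal pairs along the chain ``is where the real work lies'' and that controlling the finite dicuts of $D_\alpha$ versus those of $D$ is delicate. None of this is needed: countability is \emph{automatic} once Steps~1 and~3 are in place. The missing lemma is that \emph{every $2$-block of a finitely separable multigraph is countable} (Lemma~\ref{lem:2-block}\ref{item:2-block-countable} in the paper). The proof is short: if a $2$-block $X$ had $\aleph_1$ many vertices, apply the Star--Comb Lemma to an $\aleph_1$-sized set $U\subseteq V(X)$ to obtain a subdivided star $S_1$ with centre $c_1$ and $\aleph_1$ leaves in $U$ (a comb is impossible at regular cardinality $\aleph_1$ in the uncountable case, and is ruled out by rayless considerations otherwise); by $2$-connectivity apply it again in $X-c_1$ to the leaf set of $S_1$ to get a second star $S_2$ with centre $c_2$ sharing infinitely many leaves with $S_1$. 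Then $c_1$ and $c_2$ are joined by infinitely many internally disjoint paths, contradicting finite separability.

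So the correct architecture is: quotient (your Step~1), decompose into $2$-blocks (your Step~3), and \emph{observe} that each $2$-block, being a $2$-block of a finitely (di)separable digraph, is already countable. No transfinite induction, no elementary submodels, no coherence problem. What you flagged as the heart of the proof is in fact a one-paragraph structural lemma; the weight of the argument lies exactly in the bookkeeping you called Steps~1 and~3.
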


Moreover, we verify Conjecture~\ref{conj:LY-inf-nested} for several classes of digraphs. 
We gather all these results in the following theorem.  
Before we can state the theorem we have to give some further definitions.
We call a minimal non-empty dicut of a digraph a \emph{dibond}. 
Furthermore, we call an undirected one-way infinite path a \emph{ray}. 
We say a digraph is \emph{rayless} if its underlying multigraph does not contain a ray.

\begin{theorem}
    \label{thm:LY-inf-cases}
    Conjecture~\ref{conj:LY-inf-nested} holds for a weakly connected digraph~$D$ if it has any of the following properties:
    \begin{enumerate}[label=(\roman*)]
        \item \label{item:LY-fin-dijoin}
            There exists a finitary dijoin of~$D$ of finite size. 
        \item \label{item:LY-fin-disj-dicuts}
            The maximal number of disjoint finite dicuts of~$D$ is finite. 
        \item \label{item:LY-fin-nested-disj-dicuts}
            The maximal number of disjoint and pairwise nested finite dicuts of~$D$ is finite. 
        \item \label{item:LY-edge-fin-dibonds}
            Every edge of~$D$ lies in only finitely many finite dibonds of~$D$. 
        \item \label{item:LY-fin-dibonds}
            $D$ has no infinite dibond. 
        \item \label{item:LY-rayless}
            $D$ is rayless. 
    \end{enumerate}
\end{theorem}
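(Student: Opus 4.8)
I would first note that the first three cases form a chain. A finitary dijoin of finite size~$m$ picks pairwise distinct edges, one from each member of a family of pairwise disjoint finite dicuts, so such a family has at most~$m$ members; and requiring the dicuts to be pairwise nested in addition can only decrease the maximum. Hence \ref{item:LY-fin-dijoin} implies \ref{item:LY-fin-disj-dicuts} implies \ref{item:LY-fin-nested-disj-dicuts}, and it remains to treat \ref{item:LY-fin-nested-disj-dicuts}, \ref{item:LY-edge-fin-dibonds}, \ref{item:LY-fin-dibonds} and \ref{item:LY-rayless}. Two facts are used throughout (neither needing Theorem~\ref{thm:reduction}). First, an edge set is a finitary dijoin if and only if it meets every finite \emph{dibond}, since every non-empty finite dicut contains a dibond, which is then automatically finite. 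Second, there is an \emph{uncrossing lemma}: identify a dicut with the side~$X$ avoiding the heads of its edges, i.e.\ with an \emph{in-closed} set ($N^{-}(X)\subseteq X$); then $X\mapsto\abs{E(X,V(D)\setminus X)}$ is submodular on in-closed sets, so two disjoint but crossing finite dicuts with sides $X_{1},X_{2}$ can be replaced by the dicuts with sides $X_{1}\cap X_{2}$ and $X_{1}\cup X_{2}$, which are disjoint, nested, contained in the union of the original two, and of total size no larger.

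For case \ref{item:LY-fin-nested-disj-dicuts}, let $\nu<\infty$ be the maximum. The crucial claim is that the union $E^{*}$ of all finite dibonds of~$D$ is a \emph{finite} edge set: were it infinite, the uncrossing lemma should allow one to ``spread out'' an infinite family of pairwise disjoint, pairwise nested finite dibonds, contradicting $\nu<\infty$. Granting this, let $D'$ arise from~$D$ by contracting every edge outside~$E^{*}$. Then $D'$ is finite, since deleting $\abs{E^{*}}$ edges from the connected underlying multigraph of~$D$ leaves at most $\abs{E^{*}}+1$ components; and the non-empty dicuts of~$D'$ are precisely the non-empty finite dicuts of~$D$, since on the one hand a dicut of~$D'$ is a dicut of~$D$ lying inside~$E^{*}$, hence finite, and on the other hand a finite dicut of~$D$ is an edge-disjoint union of finite dibonds, hence lies inside~$E^{*}$ and respects the components of $D-E^{*}$, so it descends to a dicut of~$D'$. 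Applying Theorem~\ref{thm:LY-fin-nested} to~$D'$ and lifting the resulting nested optimal pair along the contraction gives a nested optimal pair of~$D$. Making the spreading-out argument rigorous --- that is, proving $E^{*}$ finite --- is the main obstacle here.

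For cases \ref{item:LY-edge-fin-dibonds} and \ref{item:LY-fin-dibonds} the plan is a compactness argument. Under \ref{item:LY-edge-fin-dibonds}, the hypergraph whose hyperedges are the finite dibonds of~$D$ is locally finite. Fixing an enumeration of its hyperedges, let $D_{n}$ be obtained from~$D$ by contracting all edges outside the union of the first~$n$ finite dibonds; each $D_{n}$ is finite and carries a nested optimal pair by Theorem~\ref{thm:LY-fin-nested}. Choosing these pairs coherently --- for instance minimally with respect to a fixed well-order of edges --- one makes their traces on each finite set $B_{1}\cup\dots\cup B_{k}$ stabilise, and then a compactness principle (K\H{o}nig's lemma if $D$ is countable, Rado's selection lemma in general) assembles a nested optimal pair of~$D$. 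Under \ref{item:LY-fin-dibonds} I would either run the same scheme after first showing that the absence of an infinite dibond already forces local finiteness of the finite-dibond hypergraph --- an infinite family of finite dibonds through a fixed edge should, by a compactness argument on their in-closed sides, converge to an infinite dibond --- or organise the compactness along a canonical tree-decomposition of~$D$ over its (necessarily finite) dibonds. The delicate points in this part are making the partial solutions cohere in \ref{item:LY-edge-fin-dibonds}, and, in \ref{item:LY-fin-dibonds}, checking that the limiting dicut really is a dibond, i.e.\ that connectivity of both sides and infiniteness survive the limit.

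Finally, for case \ref{item:LY-rayless} I would use transfinite induction on the rayless rank of~$D$. If the rank is~$0$, then $D$ is finite and Theorem~\ref{thm:LY-fin-nested} applies. For the inductive step, pick by the structure theory of rayless graphs a finite set $S\subseteq V(D)$ with every component of $D-S$ of strictly smaller rank. For each component~$C$, the digraph obtained from $D[V(C)\cup S]$ by contracting~$S$ to one vertex has strictly smaller rank, hence admits a nested optimal pair by the inductive hypothesis. One then merges these pairs with a nested optimal pair of a finite contraction of~$D$ in which~$S$ and the finitely many components sending an edge to~$S$ remain visible --- furnished by case \ref{item:LY-fin-disj-dicuts}, or directly by Theorem~\ref{thm:LY-fin-nested} --- into a single nested optimal pair of~$D$. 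The delicate part is the bookkeeping: a finite dicut of~$D$ may involve~$S$ together with several components at once, and one must verify both that every such dicut meets the merged finitary dijoin and that the merged family of finite dicuts stays pairwise disjoint and pairwise nested. Across all of these cases, the recurring difficulty is certifying that a finite (or finitely branching) approximation of~$D$ faithfully reflects all of the finite dicuts of~$D$.
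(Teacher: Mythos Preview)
Your plan for cases~\ref{item:LY-edge-fin-dibonds} and~\ref{item:LY-fin-dibonds} matches the paper's: a compactness argument for~\ref{item:LY-edge-fin-dibonds}, and for~\ref{item:LY-fin-dibonds} the reduction to~\ref{item:LY-edge-fin-dibonds} via the observation that an edge lying in infinitely many finite dibonds forces an infinite dibond through it.

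Your approach to case~\ref{item:LY-fin-nested-disj-dicuts}, however, rests on a false claim: the union~$E^{*}$ of all finite dibonds need \emph{not} be finite when the maximum number of disjoint nested finite dicuts is finite. Take the digraph on~${\{u,v\}\cup\{c_i : i\in\NN\}}$ with edges~${f=uv}$, ${g_i=uc_i}$, ${h_i=vc_i}$ and~${k_i=c_{i+1}c_i}$. One checks that the finite dibonds are precisely~${B_n=\delta^-(\{c_0,\dots,c_n\})=\{g_0,\dots,g_n,h_0,\dots,h_n,k_n\}}$; any two share~$g_0$ and~$h_0$, so the maximum number of disjoint finite dicuts is~$1$ (and~$\{g_0\}$ is a finitary dijoin), yet~${E^{*}=\{g_i,h_i,k_i:i\in\NN\}}$ is infinite. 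Uncrossing cannot help, since~${B_n\wedge B_m=B_{\min(n,m)}}$ and~${B_n\vee B_m=B_{\max(n,m)}}$. The paper's route is a direct compactness argument instead: for every finite~${N\subseteq E(D)}$ the finite digraph~$D.N$ has a nested optimal pair of size at most~$\nu$ by Theorem~\ref{thm:LY-fin-nested}; a fixed maximal nested disjoint family~${\mathcal{B}\subseteq\mathfrak{B}_{\textnormal{fin}}}$ (of size at most~$\nu$) can serve as the dibond side of every such pair once~${N\supseteq\bigcup\mathcal{B}}$, and compactness then selects a compatible~${F\subseteq\bigcup\mathcal{B}}$. No finiteness of~$E^{*}$ is needed or available.

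For case~\ref{item:LY-rayless} your induction on rayless rank is quite different from the paper's route, which avoids induction entirely: one shows that the finitely-diseparable quotient~${D{/}{\equiv_{\mathfrak{B}_\textnormal{fin}}}}$ of a rayless~$D$ is again rayless, hence (being finitely separable) has no infinite bond by Corollary~\ref{cor:fin-sep+rayless=no_inf-bond}, so case~\ref{item:LY-fin-dibonds} applies and the resulting pair lifts back to~$D$. Your merging step across components of~${D-S}$ is left unspecified, and since a finite dibond of~$D$ need not restrict to a dibond of any single piece, it is not clear that disjointness and nestedness of the merged family can actually be arranged.
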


The structure of this paper is as follows.
In Section~\ref{sec:prelims} we introduce our needed notation and prove some basic tools that we will need throughout the paper. 
In Section~\ref{sec:examples} we will discuss some examples which shall illustrate the difficulties of relating Conjecture~\ref{conj:LY-inf} to Conjecture~\ref{conj:LY-inf-nested}. 
Section~\ref{sec:reduce} is dedicated to the proof of Theorem~\ref{thm:reduction}.
In Section~\ref{sec:cases} we shall deduce the items of Theorem~\ref{thm:LY-inf-cases} via several lemmas by lifting Theorem~\ref{thm:LY-fin-nested} to infinite digraphs via the compactness principle. 
Section~\ref{sec:hypergraphs} is dedicated to a short discussion of the connection between Conjecture~\ref{conj:LY-inf} and the more general problem from Aharoni about matchings in infinite hypergraphs. 

\vspace{0.2cm}

In a second paper~\cite{inf-LY:2} on the Infinite Lucchesi-Younger Conjecture we will extend several parts of the algorithmic proof of Frank~\cite{frank_book}*{Section~9.7.2} for Theorem~\ref{thm:LY-fin} to infinite digraphs. 
This proof is based on the ideas of the negative circuit method developed for more general submodular frameworks by Fujishige~\cite{fujishige} and Zimmermann~\cite{zimmermann}. 
Instead of just starting with a dijoin of minimum size, the idea of Frank's proof is to start with any dijoin and algorithmically ``improve'' it with the help of cycles of negative cost in an auxiliary digraph whose definition depends on the dijoin. 
Once the dijoin can no longer be ``improved'' some structural properties of the auxiliary graph help in fining the desired set of dibonds which together with the dijoin form a nested optimal pair.

\section{Basic notions and tools}
\label{sec:prelims}

For basic facts about finite and infinite graphs we refer the reader to \cite{diestel_buch}. 
Several proofs, especially in Section~\ref{sec:cases}, rely on the compactness principle in combinatorics.
We omit stating it here but refer to~\cite{diestel_buch}*{Appendix~A}. 
Especially for facts about directed graphs we refer to~\cite{bang-jensen}. 

In general, we allow our digraphs to have parallel edges, but no loops unless we explicitly mention them. 
Similarly, all undirected multigraphs we consider do not have loops if nothing else is explicitly stated. 

\vspace{0.2cm}

Throughout this section let~$D$ denote a digraph with vertex set~$V(D)$ and edge set~$E(D)$. 
We view the edges of~$D$ as ordered pairs~${(u,v)}$ of vertices~${u, v \in V(D)}$ and shall write~${uv}$ instead of~${(u, v)}$, although this might not uniquely determine an edge.
In parts where a finer distinction becomes important we shall clarify the situation.
For an edge~${uv \in E(D)}$ we furthermore call the vertex~$u$ as the \emph{tail} of~${uv}$ and~$v$ as the \emph{head} of~$uv$.
We denote the underlying undirected multigraph of~$D$ by~$\Un(D)$.

In an undirected non-trivial path we call the vertices incident with just one edge the \emph{endvertices} of that path.
For the trivial path consisting just of one vertex, we call that vertex also an \emph{endvertex} of that path.
If~$P$ is an undirected path with endvertices~$v$ and~$w$, we call~$P$ a \emph{$v$--$w$~path}.
For a path~$P$ containing two vertices~${x, y \in V(P)}$ we write~${xPu}$ for the $x$--$u$ subpath contained in~$P$.
Should~$P$ additionally be a directed path where~$v$ has out-degree~$1$, then we call~$P$ a \emph{directed $v$--$w$~path}.
We also allow to call the trivial path with endvertex~$v$ a \emph{directed $v$--$v$~path}.
For two vertex sets~${A, B \subseteq V(D)}$ we call an undirected path~${P \subseteq D}$ an \emph{$A$--$B$~path} if~$P$ is an $a$--$b$~path for some~${a \in A}$ and~${b \in B}$ but is disjoint from~${A \cup B}$ except from its endvertices.
Similarly, we call a directed path that is an $A$--$B$~path a \emph{directed $A$--$B$~path}.

We call an undirected graph a \emph{star} if it is isomorphic to the complete bipartite graph~$K_{1, \kappa}$ for some cardinal~$\kappa$, where the vertices of degree~$1$ are its \emph{leaves} and the vertex of degree~$\kappa$ is its \emph{centre}.

We define a \emph{ray} to be an undirected one-way infinite path. 
Any subgraph of a ray~$R$ that is itself a ray is called a \emph{tail} of~$R$. 
The unique vertex of~$R$ of degree~$1$ is the \emph{start vertex of~$R$}.

An undirected multigraph that does not contain a ray is called \emph{rayless}. 

A \emph{comb~$C$ with teeth~$U$} is an undirected graph~$C$ with subset~${U \subseteq V(C)}$ of its vertices 
such that~$C$ is the union of a ray~$R$ together with infinitely many disjoint undirected finite (possibly trivial) $U$--$V(R)$ paths. 
The ray~$R$ is called the \textit{spine} of~$C$.

The following lemma is a fundamental tool in infinite graph theory. 
A basic version of this lemma which does not take different infinite cardinalities into account can be found in~\cite{diestel_buch}*{Lemma~8.2.2}. 
We shall only apply this more general version for vertex sets of cardinality~$\aleph_0$ and~$\aleph_1$ in this paper.

\begin{lemma}
    \label{lem:star-comb}
    \cite{star-comb}*{Lemma~2.5}
    Let~$G$ be an infinite connected undirected multigraph and let~${U \subseteq V(G)}$ be such that~${\abs{U} = \kappa}$ for some infinite regular cardinal~$\kappa$.
    Then there exists a set~${U' \subseteq U}$ with~${\abs{U'} = \abs{U}}$ such that~$G$ either contains a comb with teeth~$U'$ or a subdivided star whose set of leaves is~$U'$. 
\end{lemma}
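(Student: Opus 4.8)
The plan is to prove the lemma by a single spanning-tree argument that handles the countable case $\kappa = \aleph_0$ (the classical Star-Comb Lemma) and the uncountable regular case in parallel, the point in the latter being that a comb has only countably many teeth, so the conclusion there must always be the subdivided-star alternative.

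First I would fix a spanning tree $T$ of $G$ --- which exists by a standard application of Zorn's lemma even when $\abs{V(G)}$ is uncountable, the parallel edges of the multigraph being irrelevant here --- root it at an arbitrary vertex $r$, write $T_x$ for the subtree of $T$ hanging below $x$, and call $x$ \emph{heavy} if $\abs{U \cap V(T_x)} = \kappa$. Since $U \subseteq V(T_r)$ the root is heavy, and the set $S$ of heavy vertices is closed under taking ancestors and so induces a subtree of $T$ containing $r$. The elementary fact driving everything is that two paths obtained by descending in $T$ from a common vertex $v$ into two distinct children of $v$ meet only in $v$ and lie in the disjoint subtrees below those children; this is what turns ``$v$ has many children meeting $U$'' into a subdivided star centred at $v$, and a descending ray with side-branches into $U$ into a comb.

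The main dichotomy is: either some heavy vertex $v$ has at least $\kappa$ children $c$ with $U \cap V(T_c) \neq \emptyset$ --- then, choosing one vertex of $U$ inside each such $T_c$ together with the $T$-path from $v$ to it, one gets a subdivided star with at least $\kappa$ leaves in $U$, which we thin down to a leaf set $U' \subseteq U$ with $\abs{U'} = \kappa$ --- or every heavy vertex has fewer than $\kappa$ children meeting $U$. In the second case, since for heavy $v$ the number $\abs{U \cap V(T_v)} = \abs{U \cap \{v\}} + \sum_{c} \abs{U \cap V(T_c)}$ equals $\kappa$ and is a sum of fewer than $\kappa$ terms, regularity of $\kappa$ forces some child of $v$ to be heavy again, so iterating from $r$ produces a heavy ray $R = v_0 v_1 \dotsb$ in $T$. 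When $\kappa$ is uncountable this branch cannot actually occur: the same regularity bound, applied level by level, shows that $S$ then has fewer than $\kappa$ vertices on each of its (finite) levels and hence, $\kappa$ being uncountable, $\abs{S} < \kappa$; then sorting each $u \in U$ by the last heavy vertex on its $r$--$u$ path in $T$ (so that $u$ lies either in $S$ or in $U \cap V(T_c)$ for a non-heavy child $c$ of a heavy vertex, a set of size $< \kappa$) yields $\abs{U} < \kappa$ by regularity, a contradiction. So for uncountable regular $\kappa$ the first alternative always holds.

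It remains to finish the countable case along the heavy ray $R$. Here a further split does it: if infinitely many $v_i$ have a child $c_i \neq v_{i+1}$ with $U \cap V(T_{c_i}) \neq \emptyset$, then choosing $u_i \in U \cap V(T_{c_i})$ and the $T$-path from $v_i$ to $u_i$ --- which lies in $\{v_i\} \cup V(T_{c_i})$, hence meets $R$ only in $v_i$ and is disjoint from the analogous paths at other indices --- exhibits, after passing to an infinite subset of the indices, a comb whose teeth form a set $U' \subseteq U$ with $\abs{U'} = \aleph_0$; otherwise, for all large $i$ the only child of $v_i$ meeting $U$ is $v_{i+1}$, which forces $U \cap V(T_{v_i}) \subseteq V(R)$, an infinite set, so the tail of $R$ with trivial teeth at its vertices in $U$ is the desired comb. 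The genuinely fiddly point --- and the step I expect to be the main obstacle to a fully rigorous write-up --- is exactly this last bookkeeping in the countable case: verifying that the side-paths are pairwise disjoint and meet $R$ only at the intended vertex, that the chosen teeth genuinely lie off the spine, and, throughout, that every use of cardinal arithmetic is only the statement that a union of fewer than $\kappa$ sets each of size less than $\kappa$ has size less than $\kappa$, i.e.\ the regularity of $\kappa$.
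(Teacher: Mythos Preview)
The paper does not prove this lemma at all: it is quoted verbatim from \cite{star-comb}*{Lemma~2.5} and used as a black box, so there is no ``paper's own proof'' to compare against. Your argument, however, is correct and is the standard route to this result. The spanning-tree set-up, the heavy/light dichotomy, the regularity count showing that for uncountable~$\kappa$ the heavy subtree~$S$ has size~$< \kappa$ level by level (hence the star alternative is forced), and the ray-plus-side-branches construction in the countable case are all sound. The one place to tighten in a formal write-up is exactly the bookkeeping you flag: when building the comb in Case~A you should pick each~$u_i$ as a vertex of~$U \cap V(T_{c_i})$ \emph{closest} to~$v_i$ in~$T$, so that the resulting $v_i$--$u_i$ path is internally disjoint from~$U$ and hence a genuine $U$--$V(R)$ path in the paper's sense; and if~$v_i \in U$ you may simply take the trivial path there. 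With that adjustment the side-paths are pairwise disjoint and meet~$R$ only at the~$v_i$, as required.
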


\subsection{Cuts and dicuts}

Throughout this subsection let~$D$ denote a weakly connected digraph.
For two vertex sets~${X, Y \subseteq V(D)}$ we define~${E_D(X, Y) \subseteq E(D)}$ as the set of those edges 
that have their head in~${X \setminus Y}$ and their tail in~${Y \setminus X}$, 
or their head in~${Y \setminus X}$ and their tail in~${X \setminus Y}$. 
Furthermore, we define 
\[
    {\overrightarrow{E}_D(X, Y) := \{ uv \in E(X, Y) \, | \, u \in X \textnormal{ and } v \in Y \}}.
\]
We will usually omit the subscript if the graph we are talking about is clear from the context. 

If~${X \cup Y = V(D)}$ and~${X \cap Y = \emptyset}$, we call~${E(X, Y)}$ a \emph{cut} of~$D$ and refer to~$X$ and~$Y$ as the \emph{sides} of the cut. 
Moreover, by writing~${E(M, N)}$ and calling it a cut of~$D$ we implicitly assume~$M$ and~$N$ to be the sides of that cut, 
and by calling an edge set~$B$ a cut we implicitly assume that~$B$ is of the form~${E(M,N)}$ for suitable sets~$M$ and~$N$. 

We call two cuts~${E(X_1, Y_1)}$ and~${E(X_2, Y_2)}$ of~$D$ \emph{nested} 
if one of~${X_1, Y_1}$ is $\subseteq$-comparable with one of~${X_2, Y_2}$.
Moreover, we call a set or sequence of cuts of~$D$ nested if its elements are pairwise nested. 
If two cuts of~$D$ are not nested, we call them \emph{crossing} (or say that they \emph{cross}). 

A cut is said to \emph{separate} two vertices~${v, w \in V}$ if~$v$ and~$w$ lie on different sides of that cut. 

A minimal non-empty cut is called a \emph{bond}. 
Note that a cut~${E(X, Y)}$ is a bond, if and only if the induced subdigraphs~${D[X]}$ and~${D[Y]}$ are weakly connected digraphs. 

We call a cut~${E(X, Y)}$ \emph{directed}, or briefly a \emph{dicut}, if all edges of~${E(X, Y)}$ have their head in one common side of the cut. 
A bond that is also a dicut is called a \emph{dibond}.

We call~$D$ \emph{finitely separable} if for any two different vertices~${v, w \in V}$ there exists a finite cut of~$D$ such that~$v$ and~$w$ are separated by that cut. 
Note that if two vertices are separated by some finite cut, then they are separated by some finite bond as well. 
If furthermore any two different vertices~${v, w \in V(D)}$ can even be separated by a finite dicut, or equivalently a finite dibond, of~$D$, we call~$D$ \emph{finitely diseparable}.

For a vertex set~${X \subseteq V(D)}$ we define 
\begin{itemize}
    \item ${\delta_D^-(X) := \overrightarrow{E}(V(D) \setminus X, X)}$, the set of in-going edges of~$X$;
    \item ${\delta_D^+(X) := \overrightarrow{E}(X, V(D) \setminus X)}$, the set of out-going edges of~$X$;
    \item ${\delta_D(X) := \delta^-(X) \cup \delta^+(X)}$, the set of incident edges of~$X$.
\end{itemize}
As before, we will usually omit the subscript if the graph we are talking about is clear from the context. 

Given a dicut~${B = \overrightarrow{E}(X, Y)}$ we call~$Y$ the \emph{in-shore} of~$B$ and~$X$ the \emph{out-shore} of~$B$.
We shall also write~$\insh(B)$ for the in-shore of the dicut~$B$ and~$\outsh(B)$ for the out-shore of~$B$.

For undirected multigraphs \emph{cuts}, \emph{bonds}, \emph{sides}, the notion of being \emph{nested} and the notion of \emph{separating} two vertices are analogously defined.
Hence, we call an undirected multigraph \emph{finitely separable} if any two vertices can be separated by a finite cut of the multigraph.
Furthermore, in an undirected multigraph~$G$ with~${X, Y \subseteq V(G)}$ we write~${E(X, Y)}$ for the set of those edges of~$G$ that have one endvertex in~${X \setminus Y}$ and the other in~${Y \setminus X}$.

\vspace{0.2cm}

Given a set~${\mathcal{B} = \{ B_i \, | \, i \in I\}}$ of dicuts of~$D$, we write
\begin{itemize}
    \item ${\bigwedge \mathcal{B} := \delta^{-} \left( \bigcap \{ \insh(B) \, | \, B \in \mathcal{B} \} \right)}$, 
        or simply~${B_1 \wedge B_2}$ for~${\bigwedge \{ B_1, B_2 \}}$; and
    \item ${\bigvee \mathcal{B} := \delta^{-} \left( \bigcup \{ \insh(B) \, | \, B \in \mathcal{B} \} \right)}$, 
        or simply~${B_1 \vee B_2}$ for~${\bigvee \{ B_1, B_2 \}}$.
\end{itemize}

Note that since~$D$ is weakly-connected, 
${\bigwedge \mathcal{B}}$ is empty if and only if~${\bigcap \{ \insh(B) \, | \, B \in \mathcal{B} \}}$ is empty, and 
${\bigvee \mathcal{B}}$ is empty if and only if~${\bigcup \{ \insh(B) \, | \, B \in \mathcal{B} \}}$ equals~$V(D)$. 

\begin{remark}
    \label{rem:corners1}
    Let~${\mathcal{B}}$ be a set of dicuts of~$D$.
    \begin{enumerate}
        \item ${\bigwedge \mathcal{B}}$ is a (possibly empty) dicut of~$D$.
        \item ${\bigvee \mathcal{B}}$ is a (possibly empty) dicut of~$D$. \qed
    \end{enumerate}
\end{remark}

Note that~${\bigwedge \mathcal{B}}$ and~${\bigvee \mathcal{B}}$ might be infinite dicuts of~$D$, even if each~${B \in \mathcal{B}}$ is finite. 
Furthermore, note that if~$B_1$ and~$B_2$ are dibonds then~${B_1 \wedge B_2}$ does not need to be a dibond, even if it is non-empty. 
A simple double-counting argument yields the following. 

\begin{remark}
    \label{rem:corners2}
    Let~$B_1$ and~$B_2$ be dicuts of~$D$, and let~${F \subseteq E(D)}$. 
    Then 
    \begin{enumerate}
        \item ${(B_1 \cap F) \cup (B_2 \cap F) = ((B_1 \wedge B_2) \cap F) \cup ((B_1 \vee B_2) \cap F)}$; and
        \item ${\abs{B_1 \cap F} + \abs{B_2 \cap F} = \abs{(B_1 \wedge B_2) \cap F} + \abs{(B_1 \vee B_2) \cap F}}$.
    \end{enumerate}

    Moreover, if~${B_1}$ and~${B_2}$ are disjoint, then~${B_1 \wedge B_2}$ and~${B_1 \vee B_2}$ are disjoint as well. \qed
\end{remark}

\vspace{0.2cm}

Let~$B$ be a dicut.
We call a set~${\mathcal{B} = \{ B_i \, | \, i \in I \}}$ a \emph{decomposition of~$B$} if for each~${i, j \in I}$
\begin{itemize}
    \item ${B_i \subseteq B}$ is a (possibly empty) dicut; 
    \item ${B_i \cap B_j = \emptyset}$ for~${i \neq j}$; 
    \item ${\bigcup_{k \in I} B_k = B}$.
\end{itemize}
We write~${B = \bigoplus \mathcal{B}}$ if~${\mathcal{B}}$ is a decomposition of~$B$.

\subsection{Dijoins and optimal pairs for classes of finite dibonds}

Throughout this subsection let~$D$ denote a weakly connected digraph.
We call an edge set~${F \subseteq E(D)}$ a \emph{dijoin} of~$D$ if~${F \cap B \neq \emptyset}$ holds for every 
dicut~$B$ of~$D$.
Similarly, we call an edge set~${F \subseteq E(D)}$ a \emph{finitary dijoin} of~$D$ if~${F \cap B \neq \emptyset}$ holds for every 
finite dicut~$B$ of~$D$. 
Note that an edge set~${F \subseteq E(D)}$ is already a (finitary) dijoin if~${F \cap B \neq \emptyset}$ holds for every (finite) dibond of~$D$ since every (finite) dicut is a disjoint union of (finite) dibonds.

\vspace{0.2cm}

Let~$\mathfrak{B}$ be a 
class of finite dibonds of~$D$. 
Then we call an edge set~${F \subseteq E(D)}$ a \emph{$\mathfrak{B}$-dijoin} of~$D$ 
if~${F \cap B \neq \emptyset}$ holds for every~${B \in \mathfrak{B}}$. 
Note that for the class~${\mathfrak{B}_{\textnormal{fin}}}$ of all finite dibonds of~$D$ we immediately get that the finitary dijoins of~$D$ are precisely the ${\mathfrak{B}_{\textnormal{fin}}}$-dijoins of~$D$. 

We call a tuple~${(F,\mathcal{B})}$ a \emph{$\mathfrak{B}$-optimal pair} for~$D$ if 
\begin{enumerate}[label=(\roman*)]
    \item ${F \subseteq E(D)}$ is a $\mathfrak{B}$-dijoin of~$D$;
    \item ${\mathcal{B} \subseteq \mathfrak{B}}$ is a set of disjoint dibonds in~$\mathfrak{B}$;
    \item ${F \subseteq \bigcup \mathcal{B}}$; and
    \item ${\abs{F \cap B} = 1}$ for every~${B \in \mathcal{B}}$.
\end{enumerate}

We call a $\mathfrak{B}$-optimal pair~${(F,\mathcal{B})}$ for~$D$ \emph{nested} if the elements of~$\mathcal{B}$ are pairwise nested.

Note that the (nested) optimal pairs as defined in the introduction are precisely the (nested) $\mathfrak{B}_\textnormal{fin}$-optimal pairs.

Using the introduced notation we state the following question, which is the general main topic of our studies in this paper and the second paper~\cite{inf-LY:2} of this series.

\begin{question}
    \label{q:general-LY}
    Which weakly connected digraphs and classes~$\mathfrak{B}$ of finite dibonds admit a (nested) $\mathfrak{B}$-optimal pair?
\end{question}

\noindent Note that this question is more general and flexible than Conjecture~\ref{conj:LY-inf} or Conjecture~\ref{conj:LY-inf-nested} but encompasses them by setting~${\mathfrak{B} = \mathfrak{B}_{\textnormal{fin}}}$.

\vspace{0.2cm}

Let~$\mathfrak{B}$ be a class of dicuts of~$D$. 
Then let~$\mathfrak{B}^\oplus$ denote the class of dicuts~$B$ of~$D$ 
which have a partition~${B = \bigoplus \mathcal{B}}$ for some~${\mathcal{B} \subseteq \mathfrak{B}}$.

We say~$\mathfrak{B}$ is \emph{finitely corner-closed} if 
\begin{enumerate}
    \item If~${B_1, B_2 \in \mathfrak{B}}$ then 
    ${B_1 \wedge B_2 \in \mathfrak{B}^\oplus}$.
    \item If~${B_1, B_2 \in \mathfrak{B}}$ then 
    ${B_1 \vee B_2 \in \mathcal{B}^\oplus}$.
\end{enumerate}

Note that~$(\mathfrak{B}^\oplus)^\oplus = \mathfrak{B}^\oplus$.
 
Throughout this paper we will mostly consider classes of finite dibonds of~$D$ which are finitely corner-closed, for example~$\mathfrak{B}_\textnormal{fin}$, the class of finite dibonds of~$D$.

\subsection{Finitely separable multigraphs}

In this section we prove certain size related properties of finitely separable multigraphs using Lemma~\ref{lem:star-comb}.

For a multigraph~$G$ we call a subgraph~${X \subseteq G}$ a \emph{$2$-block} of~$G$ if~$X$ is a maximal connected subgraph without a cutvertex. 
Hence a $2$-block of a connected multigraph either consists of a set of pairwise parallel edges in~$G$ or is a maximal $2$-connected subgraph of~$G$. 
In a digraph~$D$ we call a subdigraph~$X$ a \emph{$2$-block} of~$D$ if~$\Un(X)$ is a $2$-block of~$\Un(D)$.

One tool we will use in this paper is the \emph{$2$-block-cutvertex-tree} (cf.~\cite{diestel_buch}*{Lemma~3.1.4}). 
Let~$\mathcal{X}$ denote the set of all $2$-blocks in~$G$, and~$C$ the set of all cutvertices in~$G$. 
Then the bipartite graph with vertex set~${\mathcal{X} \cup C}$ with edge set~${\{ c X \, | \, c \in C, X \in \mathcal{X}, c \in X \}}$ is a tree, the \emph{$2$-block-cutvertex-tree}. 

We immediately get the following remark.

\begin{remark}
    \label{rem:bonds-in-blocks}
    Let~$G$ be a multigraph or a digraph.
    \begin{enumerate}
        [label=(\roman*)]
        \item \label{item:bonds-in-blocks1}
            Every bond of~$G$ is contained in a unique $2$-block.
        \item \label{item:bonds-in-blocks2}
            Bonds of~$G$ that are contained in different $2$-blocks are nested. 
    \end{enumerate}
\end{remark}

\begin{lemma}
    \label{lem:2-block}
    \begin{enumerate}
        [label=\textit{(\roman*)}]
        \item \label{item:2-block-countable}
            Every $2$-block of a finitely separable multigraph or digraph is countable.
        \item \label{item:2-block-rayless}
            Every $2$-block of a finitely separable rayless multigraph or digraph is finite.
    \end{enumerate}
\end{lemma}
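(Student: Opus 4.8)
The plan is to pass to the underlying multigraph and then to trap an infinite $2$-block between two applications of the star-comb lemma (Lemma~\ref{lem:star-comb}). First I would note that a cut of a digraph $D$ and the corresponding cut of $\Un(D)$ have the same edge set, so $D$ is finitely separable (resp.\ rayless) exactly when $\Un(D)$ is, and their $2$-blocks coincide; hence it suffices to prove both statements for a finitely separable multigraph $G$ and a $2$-block $X$ of $G$. If $X$ has at most two vertices it consists of parallel edges between them, and a finite cut of $G$ separating those two vertices must contain all of these edges, so $X$ is finite; thus I may assume $X$ is $2$-connected with at least three vertices, which is exactly what lets me use that $X-v$ is connected for every $v\in V(X)$.

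\textbf{Main argument.} Suppose $X$ is infinite; for part~(i) assume in addition that $X$ is uncountable and put $\kappa:=\aleph_1$, while for part~(ii) (with $G$ rayless) keep $X$ merely infinite and put $\kappa:=\aleph_0$. In both cases $\kappa$ is an infinite regular cardinal and $|V(X)|\ge\kappa$, so I would apply Lemma~\ref{lem:star-comb} to $X$ with some $U_0\subseteq V(X)$ of size $\kappa$. The comb alternative is impossible: a comb is countable, since its spine is a countable ray and it carries only countably many pairwise disjoint finite paths down to that spine --- this contradicts $\kappa=\aleph_1$ in case~(i); and a comb contains a ray, contradicting raylessness of $G$ in case~(ii). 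So $X$ contains a subdivided star with centre $c$ and leaf set $U_1$, $|U_1|=\kappa$; letting $q_\ell$ be the neighbour of $c$ on the $c$--$\ell$ path of this star, the $q_\ell$ ($\ell\in U_1$) are pairwise distinct and the edges $cq_\ell$ are pairwise distinct, because the star's paths are internally disjoint. Next I would run the same argument inside the connected infinite multigraph $X-c$ with the vertex set $Q:=\{q_\ell\mid \ell\in U_1\}$ of size $\kappa$ (the comb alternative being excluded for the same reasons), obtaining a subdivided star in $X-c$ with some centre $c'\neq c$, a leaf set $Q'\subseteq Q$ of size $\kappa$, and pairwise internally disjoint $c'$--$q$ paths $R_q$ ($q\in Q'$), none meeting $c$. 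Concatenating each $R_q$ with the edge $qc$ and extracting a $c'$--$c$ subpath yields $|Q'|=\kappa\ge\aleph_0$ pairwise edge-disjoint $c'$--$c$ paths in $G$; hence any cut of $G$ separating $c'$ from $c$ uses an edge of each of them and is infinite, contradicting finite separability of $G$ (note $c'\neq c$). This contradiction proves~(i), and with the extra raylessness hypothesis it proves~(ii).

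\textbf{Main obstacle.} The step I expect to carry the weight is the second application of Lemma~\ref{lem:star-comb}: it requires $X-c$ to be connected, which is the sole use of $2$-connectedness of the $2$-block --- and it is essential, since $K_{1,\aleph_1}$ is finitely separable and uncountable yet has a cutvertex. The remaining care is purely combinatorial bookkeeping: checking that the concatenated walks across distinct $q\in Q'$ are genuinely pairwise edge-disjoint (true because the $R_q$ are, the edges $qc$ are distinct, and no $R_q$ touches $c$), and recalling only the easy direction of edge-Menger, that a finite edge cut bounds the number of edge-disjoint paths crossing it --- so no infinite Menger theorem is needed.
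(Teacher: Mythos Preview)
Your argument is correct and follows essentially the same double application of the star--comb lemma as the paper's proof (the paper takes the second star with leaves in~$L_1$ rather than in the neighbour set~$Q$, but the concatenation step is the same idea). One small point the paper handles explicitly that you assert without justification: the claim ${|V(X)|\ge\kappa}$ requires the pigeonhole observation that if~$|V(X)|<\kappa$ while~$X$ has size at least~$\kappa$, then some pair of vertices carries infinitely many parallel edges, contradicting finite separability---your two-vertex case is a special instance of this, but the general case (finitely many vertices with infinitely many edges for~(ii), or countably many vertices with uncountably many edges for~(i)) also needs to be dismissed.
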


\begin{proof}
   Let~$G$ be a finitely separable multigraph and let~$X$ be a $2$-block of~$G$. 
   Assume for a contradiction that either~$X$ is infinite and rayless, or~$X$ is uncountable. 
   Let~$U$ be a subset of~$V(X)$ with~${\abs{U} = \min \{ \abs{X}, \aleph_1 \}}$. 
    Applying Lemma~\ref{lem:star-comb} to~$U$ in~$X$, we obtain a subdivided star~$S_1$ in~$X$ whose set of leaves~$L_1$ satisfies~${\abs{L_1} = \abs{U}}$. 
    Let~$c_1$ be the centre of~$S_1$. 
    Using that~$X$ is $2$-connected, we now apply Lemma~\ref{lem:star-comb} to~$L_1$ in~${G-c_1}$, which is still connected. 
    Hence, we obtain a subdivided star~$S_2$ in~${G-c_1}$ whose set of leaves~$L_2$ satisfies~${\abs{L_2} = \abs{L_1}}$ and~${L_2 \subseteq L_1}$. 
    Let~$c_2$ denote the centre of~$S_2$.
    Now we get a contradiction to~$G$ being finitely separable because~$S_1$ and~$S_2$ have infinitely many common leaves in~$L_2$. 
    So~${G[V(S_1) \cup V(S_2)]}$ contains infinitely many internally disjoint $c_1$--$c_2$~paths, witnessing that~$c_1$ and~$c_2$ cannot be separated by a finite cut of~$G$.
    
    To complete the proof we still need to consider for a contradiction a $2$-block~$X$ of~$G$ whose vertex set is countable (in case~\ref{item:2-block-countable}) or finite (in case~\ref{item:2-block-rayless}) but whose edge set is uncountable (in case~\ref{item:2-block-countable}) or infinite (in case~\ref{item:2-block-countable}). 
    A contradiction to the fact that~$X$ is finitely separable arises by an easy application of the pigeonhole principle to the two-element subsets of~${V(X)}$. 
\end{proof}

Together with Remark~\ref{rem:bonds-in-blocks} we obtain the following immediate corollary. 

\begin{corollary}
    \label{cor:fin-sep+rayless=no_inf-bond}
    A finitely separable rayless multigraph has no infinite bond. 
    \qed
\end{corollary}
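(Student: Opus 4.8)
The plan is to read the statement off the two facts just established: that every bond sits inside a single $2$-block (Remark~\ref{rem:bonds-in-blocks}\ref{item:bonds-in-blocks1}), and that in a finitely separable rayless multigraph every $2$-block is finite (Lemma~\ref{lem:2-block}\ref{item:2-block-rayless}). So the corollary is genuinely a one-line consequence, and the proof is mostly bookkeeping about what ``a bond contained in a $2$-block'' means.

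Concretely, I would let $G$ be a finitely separable rayless multigraph and let $B$ be an arbitrary bond of $G$; the aim is to bound $\abs{B}$. By Remark~\ref{rem:bonds-in-blocks}\ref{item:bonds-in-blocks1} there is a unique $2$-block $X$ of $G$ with $B \subseteq E(X)$. Since the hypotheses ``finitely separable'' and ``rayless'' are properties of $G$ itself, Lemma~\ref{lem:2-block}\ref{item:2-block-rayless} applies and yields that $X$ is finite, hence $E(X)$ is finite, hence $B$ is finite. As $B$ was an arbitrary bond, $G$ has no infinite bond. (The same argument, word for word, covers the digraph version via $\Un(D)$, should one want it.)

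I do not expect any real obstacle here, since all the work was absorbed into Lemma~\ref{lem:2-block}. The only points to be careful about are purely definitional: that ``$B$ is contained in the $2$-block $X$'' is to be read as ``every edge of $B$ is an edge of the subgraph $X$'', so that the finiteness of $X$ does bound $\abs{B}$; and that we may invoke Lemma~\ref{lem:2-block}\ref{item:2-block-rayless} directly rather than having to re-verify any inherited property of $X$. Given that, the statement follows immediately, which is why it is phrased as a corollary.
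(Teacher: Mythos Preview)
Your proposal is correct and matches the paper's approach exactly: the paper states the corollary with a \qed and only the sentence ``Together with Remark~\ref{rem:bonds-in-blocks} we obtain the following immediate corollary'', i.e.\ the combination of Remark~\ref{rem:bonds-in-blocks}\ref{item:bonds-in-blocks1} and Lemma~\ref{lem:2-block}\ref{item:2-block-rayless} that you spell out.
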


\subsection{Quotients}
\label{subsec:quotients}

Throughout this subsection let~$G$ denote a digraph or a multigraph. For a set~${N \subseteq E(G)}$ let~${G{/}N}$ denote the contraction minor of~$G$ which is obtained by contracting inside~$G$ all edges of~$N$ and deleting all loops that might occur. 
Similarly, we define~${G{.}N := G{/}(E(G) \setminus N)}$.
For a vertex~${v \in V(G)}$ and any contraction minor~${G{.}N}$ with~${N \subseteq E(G)}$ let~$\dot{v}$ denote the vertex in~${G{.}N}$ which corresponds to the contracted, possibly trivial, (weak) component of~${G - N}$ containing~$v$.

We state the following basic lemma without proof.

\begin{lemma}
    \label{lem:contr-cuts}
    Let~${B, N \subseteq E(G)}$ with~${B \subseteq N}$ and 
    let~${v, w \in V(G)}$. 
    Then~$B$ is a cut (or dicut/bond/dibond, respectively) of~$G$ that separates~$v$ and~$w$ if and only if~$B$ is a cut (or dicut/bond/dibond, respectively) of~$G{.}N$ that separates~$\dot{v}$ and~$\dot{w}$. 
    
    Moreover, two cuts~${B_1, B_2 \subseteq N}$ are nested as cuts of~$G$ if and only if they are nested as cuts of~$G{.}N$.
    \qed
\end{lemma}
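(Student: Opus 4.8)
The plan is to build an explicit bijection between the cuts of $G$ whose edge set is contained in $N$ and the cuts of $G{.}N$, arrange that this bijection is the identity on edge sets, and then verify that it preserves directedness, separation of any fixed pair of vertices, bond-ness, and nestedness; the dibond case then follows by combining the dicut and bond cases.

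First I would record the key observation linking the two sides. If $E_G(X,Y)$ is a cut of $G$ with $E_G(X,Y)\subseteq N$, then no edge of $E(G)\setminus N$ has its two endvertices on opposite sides of $\{X,Y\}$, since such an edge would belong to $E_G(X,Y)\subseteq N$. Hence every weak component of $G-N$ is contained entirely in $X$ or entirely in $Y$, so $X$ and $Y$ are unions of weak components of $G-N$, and therefore $\dot X:=\{\dot u\mid u\in X\}$ and $\dot Y:=\{\dot u\mid u\in Y\}$ partition $V(G{.}N)$. Conversely, any partition $\{X',Y'\}$ of $V(G{.}N)$ lifts: letting $X$ and $Y$ be the unions of the weak components of $G-N$ indexed by the vertices in $X'$ and in $Y'$ gives a partition $\{X,Y\}$ of $V(G)$ with $\dot X=X'$, $\dot Y=Y'$, and each weak component of $G-N$ on a single side. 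In either direction one checks $E_G(X,Y)=E_{G{.}N}(\dot X,\dot Y)$: an edge of $N$ joins $X$ to $Y$ if and only if, after contracting $E(G)\setminus N$, it joins $\dot X$ to $\dot Y$ --- in particular it is then not turned into a loop, so it survives in $G{.}N$ --- whereas every edge of $E(G)\setminus N$ has both endvertices on one side, and every edge of $G{.}N$ descends from an edge of $N$. This simultaneously yields the desired bijection and shows that a cut of $G{.}N$, regarded inside $G$, automatically has edge set contained in $N$.

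Next I would check that the bijection respects the remaining structure. Contracting edges leaves the head and tail of every surviving edge unchanged, so $E_G(X,Y)$ is a dicut of $G$ if and only if $E_{G{.}N}(\dot X,\dot Y)$ is a dicut of $G{.}N$. For separation, $v\in X$ and $w\in Y$ holds if and only if $\dot v\in\dot X$ and $\dot w\in\dot Y$ (because $X$ and $Y$ are unions of whole weak components of $G-N$), and in that case $\dot v\neq\dot w$ automatically, since $v$ and $w$ then lie in different weak components. For bonds, observe that the bijection restricts to one between the cuts of $G$ with edge set contained in $B$ and the cuts of $G{.}N$ with edge set contained in $B$ (as $B\subseteq N$ and the bijection is the identity on edge sets); hence $B$ is a minimal non-empty cut of $G$ if and only if it is a minimal non-empty cut of $G{.}N$, and together with the dicut case this settles dibonds. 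Finally, for the ``moreover'' part, the map $v\mapsto\dot v$ carries each side, being a union of whole weak components of $G-N$, onto the corresponding subset of $V(G{.}N)$, so $X_1\subseteq X_2$ if and only if $\dot X_1\subseteq\dot X_2$, and similarly for every comparability between a side of $B_1$ and a side of $B_2$; hence $B_1$ and $B_2$ are nested in $G$ if and only if they are nested in $G{.}N$.

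I expect no genuinely hard step here: the lemma is essentially bookkeeping about edge contraction. The point requiring the most care is the first one --- verifying that ``the edge set is contained in $N$'' is exactly the condition ``both sides are unions of weak components of $G-N$'', and that the edges of such a cut are never collapsed to loops by contracting $E(G)\setminus N$ --- because this is what makes the correspondence a genuine bijection that is the identity on edge sets. Once this is pinned down, directedness, separation, minimality and nestedness each reduce immediately to an elementary property of edge contraction.
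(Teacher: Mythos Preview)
Your argument is correct. The paper does not actually prove this lemma: it is stated explicitly as a ``basic lemma without proof'' and closed with a bare \qed. Your proposal supplies exactly the routine verification the authors omitted --- the key point being that a cut of~$G$ has edge set contained in~$N$ if and only if each side is a union of weak components of~$G-N$, which is precisely what makes the correspondence~$X\mapsto\dot X$ a side-preserving, edge-set-preserving bijection between the relevant cuts of~$G$ and all cuts of~$G{.}N$. Everything else (directedness, separation, minimality, nestedness) then follows as you indicate.
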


Given a set~$\mathcal{B}$ of cuts of~$G$, we define an equivalence relation on~$V(G)$ 
by setting~${v \equiv_\mathcal{B} w}$ if and only if 
we cannot separate~$v$ from~$w$ by a cut in~$\mathcal{B}$. 
It is easy to check that~$\equiv_\mathcal{B}$ is indeed an equivalence relation.
For~${v \in V(G)}$ we shall write~${[v]_{\equiv_\mathcal{B}}}$ for the equivalence class with respect to~$\equiv_\mathcal{B}$ containing~$v$. 

Let~${G{/}{\equiv_\mathcal{B}}}$ denote the digraph, or multigraph respectively, 
which is obtained from~$G$ by identifying the vertices in the same equivalence class of~$\equiv_\mathcal{B}$ and deleting loops. 
Furthermore, let~${\hat{X} := \{ [x]_{\equiv_\mathcal{B}} \, | \, x \in X \}}$ for every set~${X \subseteq V(D)}$, 
as well as~${\tilde{X} := \{ y \in x \, | \, x \in X \}}$ for every set~${X \subseteq V(G){/}\equiv_\mathfrak{B}}$.

\begin{proposition}
    \label{prop:quotient}
    Let~$G$ be a digraph or a multigraph 
    and let~$\mathcal{B}$ be a set of cuts of~$G$. 
    Then the following statements hold.
    \begin{enumerate}[label=\textit{(\roman*)}]
        \item \label{item:quo-weakly-con} 
            {${G{/}{\equiv_\mathcal{B}}}$ is (weakly) connected if~$G$ is (weakly) connected.}
        \item \label{item:quo-dicut-down} 
            {Every cut (or dicut/bond/dibond, respectively)~${E(X, Y) \in \mathcal{B}}$ of~$G$ 
            is also a cut (or dicut/bond/dibond, respectively) of~${G{/}{\equiv_\mathcal{B}}}$, 
            and~${E(X, Y) = E(\hat{X}, \hat{Y})}$.}
        \item \label{item:quo-dicut-up} 
            {Every cut (or dicut, respectively)~${E(X, Y)}$ of~${G{/}{\equiv_\mathcal{B}}}$ 
            is also a cut (or dicut, respectively) of~$G$, 
            and~${E(X,Y) = E(\tilde{X},\tilde{Y})}$.}
        \item \label{item:quo-dicut-nested} 
            {Two cuts in~$\mathcal{B}$ are nested as cuts of~$G$ if and only if they are nested as cuts of~$G{/}{\equiv_\mathcal{B}}$.}
        \item \label{item:quo-fin-disep} 
            {${G{/}{\equiv_\mathcal{B}}}$ is $\mathcal{B}$-separable.}
    \end{enumerate}
\end{proposition}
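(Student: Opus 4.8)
The whole proposition rests on one elementary observation about the equivalence relation: for every cut $E(X,Y)\in\mathcal{B}$, the partition of $V(G)$ into $\equiv_\mathcal{B}$-classes refines the partition $\{X,Y\}$, i.e.\ each $\equiv_\mathcal{B}$-class lies entirely in one side of $E(X,Y)$. Indeed, if some class met both $X$ and $Y$, it would contain vertices $v\in X$ and $w\in Y$, and then $E(X,Y)$ would be a cut in $\mathcal{B}$ separating $v$ from $w$, contradicting $v\equiv_\mathcal{B}w$. Thus $X\mapsto\hat X$ sends the sides of a cut of $\mathcal{B}$ to a genuine partition $\{\hat X,\hat Y\}$ of $V(G{/}{\equiv_\mathcal{B}})$, and $\tilde{\hat X}=X$ for such a side $X$. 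With this in hand, item \ref{item:quo-weakly-con} is the standard fact that a quotient of a (weakly) connected (multi- or di-)graph stays (weakly) connected, since the image of a walk is a walk.

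For item \ref{item:quo-dicut-down}, fix $B=E(X,Y)\in\mathcal{B}$. By the observation, no edge of $B$ joins two vertices of a common class, so no edge of $B$ is contracted or turned into a loop; hence $B\subseteq E(G{/}{\equiv_\mathcal{B}})$, and as subsets of $E(G)$ one checks $B=E_{G{/}{\equiv_\mathcal{B}}}(\hat X,\hat Y)$: an edge of $G$ from $X$ to $Y$ has endpoint-classes in $\hat X$ and $\hat Y$; conversely, any edge of the quotient between $\hat X$ and $\hat Y$ lifts to an edge $uv$ of $G$ whose endpoint-classes $[u]\in\hat X$, $[v]\in\hat Y$ then meet $X$ resp.\ $Y$, hence (being contained in one side of $B$) lie in $X$ resp.\ $Y$. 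So $B$ is a cut of $G{/}{\equiv_\mathcal{B}}$ with the stated sides; directedness is inherited because the quotient map preserves tails and heads, giving the dicut case. For the bond case, the same identification of edges shows $(G{/}{\equiv_\mathcal{B}})[\hat X]$ is exactly the quotient of $G[X]$ by the restriction of $\equiv_\mathcal{B}$ to $X$, hence (weakly) connected since $G[X]$ is, and likewise for $\hat Y$; combined with the dicut case this also settles dibonds. (Throughout, ``connected'' is read as ``weakly connected'' and $\Un(\cdot)$ inserted in the digraph case, and the directed notions are simply ignored in the multigraph case.)

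Item \ref{item:quo-dicut-up} is the dual and is easier: a cut $E(X,Y)$ of $G{/}{\equiv_\mathcal{B}}$ pulls back to a partition $\{\tilde X,\tilde Y\}$ of $V(G)$, and an edge $uv$ of $G$ lies between $\tilde X$ and $\tilde Y$ iff $[u]\in X$, $[v]\in Y$, which (as $X\cap Y=\emptyset$) forces $uv$ not to be a loop of the quotient; hence $E_{G{/}{\equiv_\mathcal{B}}}(X,Y)=E_G(\tilde X,\tilde Y)$ is a cut of $G$, with directedness again inherited — bonds need not lift, which is why only cuts and dicuts are claimed here. For item \ref{item:quo-dicut-nested}, using \ref{item:quo-dicut-down} it suffices to show that for sides $X_1,X_2$ of cuts in $\mathcal{B}$ one has $X_1\subseteq X_2$ iff $\hat X_1\subseteq\hat X_2$ (and the symmetric variants involving the $Y_i$); ``$\Rightarrow$'' is immediate, and ``$\Leftarrow$'' follows once more from the observation: if $\hat X_1\subseteq\hat X_2$ and $v\in X_1$, then $[v]\in\hat X_2$, so $[v]$ meets $X_2$, hence $[v]\subseteq X_2$ and $v\in X_2$. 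Finally, item \ref{item:quo-fin-disep} is immediate from the definition of $\equiv_\mathcal{B}$ and \ref{item:quo-dicut-down}: if two vertices of $G{/}{\equiv_\mathcal{B}}$ are distinct, their preimages $v,w$ satisfy $v\not\equiv_\mathcal{B}w$, so some $B=E(X,Y)\in\mathcal{B}$ separates $v$ from $w$ in $G$, and by \ref{item:quo-dicut-down} the cut $B=E(\hat X,\hat Y)\in\mathcal{B}$ separates the two vertices in $G{/}{\equiv_\mathcal{B}}$.

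I expect the only genuinely delicate step to be item \ref{item:quo-dicut-down}: one must check with care that the quotient neither contracts nor deletes an edge of a cut in $\mathcal{B}$, that the induced subgraphs on the two sides remain (weakly) connected after quotienting, and that the edge-set identification $B=E(\hat X,\hat Y)$ holds on the nose. Everything else is bookkeeping with the operators $\hat{(\cdot)}$ and $\tilde{(\cdot)}$, driven by the single observation stated at the outset.
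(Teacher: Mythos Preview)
Your proof is correct and follows essentially the same approach as the paper's: both hinge on the observation that each $\equiv_\mathcal{B}$-class lies entirely in one side of every cut in~$\mathcal{B}$, then verify the items by routine bookkeeping with~$\hat{(\cdot)}$ and~$\tilde{(\cdot)}$. Your argument is in fact more explicit and carefully written than the paper's terse version, particularly for item~\ref{item:quo-dicut-down} and the bond case, but the underlying ideas are identical.
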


\begin{proof}
    For the sake of readability we will phrase the proof just for cuts and bonds. 
    The arguments for dicuts and dibonds are analogous. 
    
    Note that if~${G[X]}$ is (weakly) connected for some~${X \subseteq V(G)}$, then~${G{/}\equiv_\mathcal{B}[\hat{X}]}$ is (weakly) connected as well. 
    Hence statement~\ref{item:quo-weakly-con} is immediate. 

    If~${E(X, Y) \in \mathcal{B}}$, then for every~${x \in X}$ all vertices in~${[x]_{\equiv_\mathcal{B}}}$ are contained in~$X$ by definition of~$\equiv_\mathcal{B}$.
    Analogously, all vertices in~${[y]_{\equiv_\mathcal{B}}}$ lie in~$Y$ for each~${y \in Y}$.
    Hence,~${E(\hat{X}, \hat{Y}) = E(X, Y)}$ and is a cut of~${D{/}{\equiv_\mathcal{B}}}$. 
    If~${E(X,Y)}$ is a bond of~$G$, then so it is as a bond of~$G{/}\equiv_\mathcal{B}$ by the observation on connectivity of the sides from above. 
    This proves statement~\ref{item:quo-dicut-down}. 

    For statement~\ref{item:quo-dicut-up} 
    let~${E(X, Y)}$ be a cut of~${G{/}{\equiv_\mathcal{B}}}$.
    By definition of~$\equiv_\mathcal{B}$ we obtain that~${E(X, Y)}$ is a cut of~$D$ as well as~${M = \tilde{X}}$ and~${N = \tilde{Y}}$ yielding~${E(X, Y) = E(\tilde{X}, \tilde{Y})}$. 
    
    For any subsets~${X, Y \subseteq V(G)}$ if~${X \subseteq Y}$, then~${\hat{X} \subseteq \hat{Y}}$. 
    For any subsets~${X, Y \subseteq V(G){/}{\equiv_\mathcal{B}}}$ if~${X \subseteq Y}$, then~${\tilde{X} \subseteq \tilde{Y}}$.
    With these observations, statement~\ref{item:quo-dicut-nested} is immediate. 

    In order to show statement~\ref{item:quo-fin-disep}, let~$[v]_{\equiv_\mathcal{B}}$ and~$[w]_{\equiv_\mathcal{B}}$ be two different vertices of~${V(G{/}{\equiv_\mathcal{B}})}$. 
    Since~$v$ and~$w$ are not contained in the same equivalence class, there must exist a cut~${E(X, Y)} \in \mathcal{B}$ separating them. 
    By statement~\ref{item:quo-dicut-down} we get that~${E(\tilde{X}, \tilde{Y})}$ is a cut of~$G{/}{\equiv_\mathcal{B}}$ and it separates~$[v]_{\equiv_\mathcal{B}}$ from~$[w]_{\equiv_\mathcal{B}}$ by definition of~$\equiv_\mathcal{B}$.
\end{proof}

We will apply this proposition mostly with the set of all finite bonds of a multigraph~$G$, or the set~$\mathfrak{B}_\textnormal{fin}$ of all finite dibonds of a digraph~$D$, yielding a multigraph which is finitely separable or a digraph which is finitely diseparable.

Let~$D$ be a weakly connected digraph 
and let~$\mathfrak{B}_\textnormal{fin}$ be the set of finite dibonds of~$D$. 
For ease of notation let~$\sim$ denote the relation~${\equiv_{\mathfrak{B}_\textnormal{fin}}}$.

Next we characterise the relation~${v \sim w}$ for any two vertices~${v, w}$. 
An edge set~$W$ is a \emph{witness} for~${v \sim w}$, if it meets every finite cut that separates~$v$ and~$w$ in both directions, i.e.~${W \cap \overrightarrow{E}(X,Y) \neq \emptyset \neq W \cap \overrightarrow{E}(Y,X)}$. 
Hence the existence of a witness for~${v \sim w}$ is an obvious obstruction for the existence of a finite dicut separating~$v$ and~$w$. 
The whole edge set is trivially a witness for~${v \sim w}$. 
Note that there exists always an inclusion-minimal witness for~${v \sim w}$ by Zorn's Lemma.

The following lemmas tell us that given a minimal witness~$W$ for~${v \sim w}$, all vertices incident with an edge of~$W$ are also equivalent to~$v$ with respect to~$\sim$.

\begin{lemma}
    \label{lem:common-witness}
    Let~${v \sim w}$ for two vertices~${v, w \in V(D)}$.
    Then a minimal witness~$W$ for~${v \sim w}$ also witnesses~${v \sim y}$ for any~${y \in V(D[W])}$.
\end{lemma}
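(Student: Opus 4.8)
The plan is to argue by contradiction: suppose $W$ is a minimal witness for $v \sim w$ but there is some vertex $y \in V(D[W])$ with $y \not\sim v$, meaning some finite dicut $B = \overrightarrow{E}(X,Y)$ separates $y$ and $v$. Since $v \sim w$, the edge set $W$ meets both $\overrightarrow{E}(X,Y)$ and $\overrightarrow{E}(Y,X)$ whenever $E(X,Y)$ is a finite cut separating $v$ and $w$; the idea is that the finite dicut separating $y$ from $v$ lets us peel off the edges of $W$ incident to $y$ (or more precisely to the $y$-side of $B$) without destroying the witness property, contradicting minimality of $W$.

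First I would fix such a $y$ and a finite dicut $B$ separating $v$ and $y$; without loss of generality say $v \in \outsh(B)$ and $y \in \insh(B)$ (the other case is symmetric, swapping the roles of in- and out-shore). I would then consider $W' := W \setminus \delta(\insh(B))$, i.e.\ remove from $W$ all edges incident with the side of $B$ containing $y$; since $B$ is a finite dicut, $\delta(\insh(B)) = B \cup \delta^-(\insh(B))\setminus\!B$ — wait, more carefully: $B$ being a dicut means $\delta^+(\insh(B)) = \emptyset$, so $\delta(\insh(B)) = B$ is itself finite, hence we only remove finitely many edges. The key claim to verify is that $W'$ is still a witness for $v \sim w$: given any finite cut $E(Z_1, Z_2)$ separating $v$ and $w$, I need $W' \cap \overrightarrow{E}(Z_1,Z_2) \neq \emptyset \neq W' \cap \overrightarrow{E}(Z_2,Z_1)$. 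For this I would combine the cut $E(Z_1,Z_2)$ with the dicut $B$ using the corner operations $\wedge$ and $\vee$ from Remark~\ref{rem:corners2}: working with the side of $E(Z_1,Z_2)$ containing $w$ intersected with $\insh(B)$ (and its union), one gets new finite cuts that still separate $v$ and $w$ and whose relevant directed edge sets lie outside $\insh(B)$, so the witness property of $W$ transfers to $W'$ on the original cut. The point is that an edge of $W$ certifying the witness property for $E(Z_1,Z_2)$ that happens to lie in $\delta(\insh(B))$ can be traded, via this corner manipulation, for an edge of $W$ lying on the appropriate corner cut and outside $\delta(\insh(B))$.

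The main obstacle I anticipate is making the corner-cut bookkeeping precise: one must check that the corner cuts obtained are genuinely cuts separating $v$ and $w$, that they are finite (which needs $B$ finite and $E(Z_1,Z_2)$ finite), and that the directed edges witnessing the crossing on $E(Z_1,Z_2)$ can always be found outside $\insh(B)$ — this requires a short case analysis according to where $v$, $w$, $y$ sit relative to the four corners $\outsh(B)\cap Z_i$, $\insh(B)\cap Z_i$. Once $W'$ is shown to be a witness, it is a proper subset of $W$ (it omits at least the edges of $W$ incident with $y$, of which there is at least one since $y \in V(D[W])$), contradicting minimality of $W$; hence no such $y$ exists, which is exactly the statement. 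I would also note in passing that this lemma is presumably paired with a companion handling the edges on the other side, and that the $\wedge$/$\vee$ disjointness clause of Remark~\ref{rem:corners2} is what keeps the edge-counting clean.
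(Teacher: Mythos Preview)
Your overall strategy---reduce $W$ by peeling off edges on the $y$-side and use corner cuts to verify the smaller set is still a witness---is exactly the paper's approach. However, there is a genuine gap in how you set up the contradiction.

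You negate the wrong statement. The lemma asserts that $W$ \emph{witnesses} $v \sim y$, i.e.\ that $W$ meets every finite cut separating $v$ and $y$ in both directions. You instead assume $y \not\sim v$, i.e.\ that some finite \emph{dicut} separates $v$ from $y$. That is a strictly stronger hypothesis for the contradiction, so even a flawless argument from it would only yield $v \sim y$, not that $W$ is a witness for it. The correct negation is: there is a finite cut $E(X,Y)$ separating $v \in X$ from $y \in Y$ such that $W$ misses one direction, say $W \cap \overrightarrow{E}(X,Y) = \emptyset$ (the other case is symmetric). This cut need not be a dicut. From this assumption, since $W$ witnesses $v \sim w$, the cut cannot separate $v$ and $w$, so $w \in X$ as well, and your corner argument then goes through essentially as you sketch it.

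There is also a bookkeeping slip in your choice of $W'$. You write $W' := W \setminus \delta(\insh(B))$, but $\delta(\insh(B))$ consists only of the cut edges, so $W'$ retains any edge of $W$ lying entirely inside $Y$. In particular, the edge of $W$ incident with $y$ might have both endpoints in $Y$ and thus survive in $W'$, so $W' = W$ is possible and you get no contradiction. The right move (which your informal description ``remove all edges incident with the $y$-side'' actually suggests) is to set $W' := W \cap E(D[X])$; then any edge of $W$ touching $y \in Y$ is automatically discarded, so $W' \subsetneq W$. The corner-cut verification that $W'$ still witnesses $v \sim w$ uses precisely the hypothesis $W \cap \overrightarrow{E}(X,Y) = \emptyset$ to force the witnessing edges on the corner cuts $E(X \cap M, Y \cup N)$ and $E(X \cap N, Y \cup M)$ to land inside $E(D[X])$.
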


\begin{proof}
    Let~$W$ be a minimal witness for~${v \sim w}$.
    Now suppose for a contradiction that there is a~${y \in V(D[W])}$ which is separated from~$v$ by a finite cut~${B = E(X, Y)}$ of~$D$ with~${y \in Y}$ which~$W$ only meets in at most one direction. 
    We consider the case that~${W \cap \overrightarrow{E}(X,Y) = \emptyset}$, the other case is analogous. 
    Since~$W$ witnesses~${v \sim w}$, both vertices~$v$ and~$w$ have to lie on the same side of~$B$, namely~$X$. 
    We claim that~${W' := W \cap E(D[X])}$ also witnesses~${v \sim w}$. 
    This would be a contradiction to the minimality of~$W$ as~$y$ is incident with an edge of~${W \setminus W'}$. 
    
    Let~${E(M, N)}$ be a finite cut of~$D$ separating~$v$ and~$w$, say with~${v \in M}$ and~${w \in N}$. 
    Since~${E(X \cap M, Y \cup N)}$ is a finite cut separating~$v$ and~$w$ but~${W \cap \overrightarrow{E}(X \cap M, Y) = \emptyset}$, we obtain~${W' \cap \overrightarrow{E}(M, N) \neq \emptyset}$. 
    Similarly~${E(X \cap N, Y \cup M)}$ is a finite cut separating~$v$ and~$w$ but~${W \cap \overrightarrow{E}(X \cap N, Y) = \emptyset}$, and hence~${W' \cap \overrightarrow{E}(N, M) \neq \emptyset}$. 
    Thus~$W'$ meets~$E(M,N)$ in both directions, as desired. 
\end{proof}

\begin{corollary}
    \label{cor:min-witness-strongly-connected}
    Let~$D$ be a weakly connected and finitely separable digraph and let~$v$ and~$w$ be two distinct vertices of~$D$ such that~${v \sim w}$. 
    If~$W$ is a finite minimal witness for~${v \sim w}$, then~${D[W]}$ induces a strongly connected digraph. 
\end{corollary}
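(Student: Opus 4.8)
The plan is to argue by contradiction: assuming that $D[W]$ is not strongly connected, I will produce a proper subset of $W$ that is still a witness for ${v\sim w}$, which contradicts the minimality of $W$.

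The one genuinely non-formal ingredient, and the only place where finite separability is used, is the following observation: in a finitely separable digraph, any two disjoint finite vertex sets $A$ and $B$ lie on opposite sides of some finite cut. To obtain it, choose for each pair ${(a,b)\in A\times B}$ a finite cut ${E(X_{ab},Y_{ab})}$ with ${a\in X_{ab}}$ and ${b\in Y_{ab}}$, and put ${X:=\bigcap_{b\in B}\bigcup_{a\in A}X_{ab}}$. Then ${A\subseteq X}$ while ${B\cap X=\emptyset}$, and since the edge boundary of a union, respectively an intersection, of vertex sets is contained in the union of their edge boundaries, ${\delta_D(X)}$ is contained in a finite union of the finite sets ${\delta_D(X_{ab})}$ and hence is a finite cut.

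Now suppose $D[W]$ is not strongly connected. First, ${W\neq\emptyset}$ because $D$ is finitely separable and ${v\neq w}$. Since $W$ is finite, $D[W]$ has only finitely many strongly connected components, and at least two; pick a component $P$ of $D[W]$ receiving no edge of $W$ from outside (a source component) and let $N$ be the union of the remaining components, so that ${V(D[W])=P\cup N}$ with ${P\cap N=\emptyset}$, both non-empty, and no edge of $W$ has its tail in $N$ and its head in $P$. By the observation above there is a finite cut ${E(X,Y)}$ of $D$ with ${P\subseteq X}$ and ${N\subseteq Y}$. Every edge of $W$ has both ends in ${V(D[W])=P\cup N}$, so a $W$-edge crossing ${E(X,Y)}$ joins $P$ and $N$; as none runs from $N$ to $P$, this gives ${W\cap\overrightarrow{E}(Y,X)=\emptyset}$.

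To conclude, note that $W$ is a witness for ${v\sim w}$ but meets the finite cut ${E(X,Y)}$ in at most one direction, so ${E(X,Y)}$ does not separate $v$ from $w$; let ${Z\in\{X,Y\}}$ be the side containing both. The other side of ${E(X,Y)}$ contains one of the non-empty sets $P$ or $N$, hence contains a vertex ${y\in V(D[W])}$, which is incident with an edge of $W$ and is separated from $v$ by ${E(X,Y)}$. Now the argument used in the proof of Lemma~\ref{lem:common-witness} — applied to the cut ${E(X,Y)}$, after interchanging the names of its two sides if necessary so that the empty direction and the side carrying $v$ and $w$ are as in that proof — shows that ${W\cap E(D[Z])}$ is again a witness for ${v\sim w}$; but the $W$-edge at $y$ has an end outside $Z$, so ${W\cap E(D[Z])\subsetneq W}$, contradicting the minimality of $W$. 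The main thing to get right is this alignment with the two symmetric cases of Lemma~\ref{lem:common-witness}; obtaining a \emph{finite} separating cut, which the observation above takes care of, is the other point where something actually has to be checked.
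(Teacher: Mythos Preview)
Your proof is correct and follows the same strategy as the paper: find a dicut of $D[W]$, use finite separability to extend it to a finite cut $E(X,Y)$ of $D$ that $W$ meets in only one direction, and derive a contradiction via Lemma~\ref{lem:common-witness}. The one difference is in how that lemma is invoked: you re-enter its proof to exhibit the proper sub-witness ${W\cap E(D[Z])}$ of $v\sim w$, whereas the paper applies the lemma's \emph{statement} directly --- since $W$ is a minimal witness for $v\sim w$, it is also a witness for $w_1\sim w_2$ for any $w_1,w_2\in V(D[W])$, so $W$ must meet the finite cut $E(X,Y)$ separating them in both directions, an immediate contradiction. This shortcut spares you the case analysis on which side $Z$ contains $v$ and $w$ and the alignment with the two symmetric cases of Lemma~\ref{lem:common-witness}.
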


\begin{proof}
    Since~$v$ and~$w$ are distinct and~$D$ is finitely separable, we know that~$W$ is not the empty set. 
    Assume for a contradiction that there is a dicut~${\overrightarrow{E}(X_W,Y_W)}$ of~${D[W]}$ separating some vertices~${w_1, w_2 \in W}$. 
    Since~$W$ is finite and~$D$ is finitely separable, there exists a finite cut~${E(X,Y)}$ of~$D$ such that~${X_W \subseteq X}$ and~${Y_W \subseteq Y}$.
    By Lemma~\ref{lem:common-witness}, $W$ is also a witness for~${w_1 \sim w_2}$. 
    Hence,~${\overrightarrow{E}(Y,X) \cap W \neq \emptyset}$, contradicting that~${\overrightarrow{E}(X_W,Y_W)}$ is a dicut of~${D[W]}$. 
\end{proof}

We close this subsection with the following corollary of Proposition~\ref{prop:quotient} and Lemma~\ref{lem:2-block}\ref{item:2-block-countable}.

\begin{corollary}
    \label{cor:countable_fin-disep_blocks}
    If~$\mathcal{B}$ is a set of finite cuts of~$G$, then each $2$-block of~${G{/}{\equiv_\mathcal{B}}}$ is countable. 
    \qed
\end{corollary}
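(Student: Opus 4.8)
The plan is to deduce the statement directly from the two results it is advertised as a corollary of, with essentially no extra work. First I would invoke Proposition~\ref{prop:quotient}\ref{item:quo-fin-disep}, which says that the quotient $G{/}{\equiv_\mathcal{B}}$ is $\mathcal{B}$-separable; that is, any two distinct vertices $[v]_{\equiv_\mathcal{B}} \neq [w]_{\equiv_\mathcal{B}}$ of $G{/}{\equiv_\mathcal{B}}$ are separated by some cut of $G{/}{\equiv_\mathcal{B}}$ arising from a member of $\mathcal{B}$.

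Next I would observe that such a separating cut is finite. By hypothesis every $B = E(X,Y) \in \mathcal{B}$ is a finite cut of $G$, and by Proposition~\ref{prop:quotient}\ref{item:quo-dicut-down} its image $E(\hat{X},\hat{Y})$ in $G{/}{\equiv_\mathcal{B}}$ has the same edge set $E(X,Y)$, hence is again finite. Therefore any two distinct vertices of $G{/}{\equiv_\mathcal{B}}$ can be separated by a finite cut, i.e.\ $G{/}{\equiv_\mathcal{B}}$ is finitely separable.

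Finally I would apply Lemma~\ref{lem:2-block}\ref{item:2-block-countable} to the finitely separable multigraph (or digraph) $G{/}{\equiv_\mathcal{B}}$, which yields at once that each of its $2$-blocks is countable. There is no real obstacle here: the only point worth a moment's attention is the bookkeeping that ``$\mathcal{B}$-separable, with every member of $\mathcal{B}$ finite'' matches the definition of ``finitely separable'' once one knows the cuts of $\mathcal{B}$ stay finite after passing to the quotient, and this is precisely what Proposition~\ref{prop:quotient}\ref{item:quo-dicut-down} provides.
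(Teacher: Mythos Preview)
Your proposal is correct and is exactly the argument the paper intends: the corollary is stated without proof (marked \qed) as an immediate consequence of Proposition~\ref{prop:quotient} and Lemma~\ref{lem:2-block}\ref{item:2-block-countable}, and you have spelled out precisely that deduction.
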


\subsection{Quotients of rayless digraphs}

Throughout this subsection let~$D$ be a weakly connected digraph, let~$\mathfrak{B}_\textnormal{fin}$ be the set of finite dibonds of~$D$, and
let~$\mathfrak{B}^{\ast}_\textnormal{fin}$ be the set of finite bonds of~$D$. 
As in the previous subsection, we denote for the sake of readability the relation~$\equiv_{\mathfrak{B}_\textnormal{fin}}$ by~$\sim$.
Moreover, we denote the relation~$\equiv_{\mathfrak{B}^{\ast}_\textnormal{fin}}$ by~$\approx$. 

Note that since~${v \sim w}$ implies that~${v \approx w}$ for all~${v, w \in V(D)}$, we obtain that~$\sim$ induces an equivalence relation on~${V(D{/}{\approx})}$. 
Since moreover the set of finite dibonds of~$D{/}{\approx}$ equals the set of finite dibonds of~$D$ by Proposition~\ref{prop:quotient}, we obtain the following remark.

\begin{remark}
   \label{rem:rayless-double-quotient}
    ${(D{/}{\approx}){/}{\sim} = D{/}{\sim}}$
    \qed
\end{remark}

The aim of this subsection is to show that if~$D$ is rayless, then so is~${D{/}{\sim}}$. 
The analogous statement for the relation~$\approx$ is proven by an easy construction. 

\begin{remark}
   \label{rem:rayless-quotient-finsep}
    If~$D$ is rayless, then~${D{/}{\approx}}$ is rayless as well.
\end{remark}

\begin{proof}
    Suppose for a contradiction that~$D$ is rayless but~${R = [v_0]_\approx [v_1]_\approx \ldots}$ is a ray in~${D{/}{\approx}}$. 
    For each~${i \in \mathbb{N}}$ let~${v'_i \in [v_i]_\approx}$ and~${v''_{i+1} \in [v_{i+1}]_\approx}$ be the endvertices of the edge~${[v_{i}]_\approx[v_{i+1}]_\approx}$ of~$R$ seen in~$D$. 
    To arrive at a contradiction, we will construct a ray in~$D$ inductively. 
    Let~$P_0$ be the trivial path containing just~$v'_0$. 
    Assume for~${i > 0}$ that there is a~${j \geq i}$ such that~$P_i$ is a ${[v_0]_\approx}$--${[v_j]_\approx}$-path which contains~$P_{i-1}$ and is internally disjoint to~$[v_k]_\approx$ for all~${k \geq j}$. 
    Let~$v'''_j$ be the endvertex of~$P_i$ in~$[v_j]_\approx$. 
    By the definition of~$\approx$ and Menger's Theorem there exist infinitely many edge-disjoint $v'''_j$--$v'_j$ paths if~${v'''_j \neq v'_j}$.
    Every~${[v_k]_\approx}$ for~${k \neq j}$ intersects only finitely many of these paths as otherwise~${v_j \approx v_k}$.
    Hence, we can find a $v'''_j$--$v'_j$ path~$P$ which is disjoint from~$P_i$, unless~${v'''_j = v'_j}$ where we set~$P$ to be the trivial path. 
    If~$P$ is disjoint from~${[v_k]_\approx}$ for all~${k > j}$, then let~$P_{i+1}$ be the concatenation of the paths~$P_i$,~$P$ and the edge~${v'_{j} v''_{j+1}}$. 
    Otherwise let~$w$ be the first vertex of~$P$ in~$[v_k]_\approx$ for some~${k > j}$ and let~$P_{i+1}$ be the concatenation of~$P_i$ with~${v'''_j P w}$. 
    In both cases~$P_{i+1}$ satisfies the desired properties and~${\bigcup_{i \in \mathbb{N}} P_i}$ is the desired ray in~$D$.
\end{proof}

Before we can prove the analogue regarding the relation~$\sim$, we have to prepare some lemmas. 
The first is about inclusion-minimal edge sets witnessing the equivalence of two vertices with respect to~$\sim$ in digraphs whose underlying multigraph is rayless.

\begin{lemma}
    \label{lem:rayless-finite}
    Let~$v$ and~$w$ be two distinct vertices of~$D$ with~${v \sim w}$. 
    If~$D$ is rayless and finitely separable,  
    then any minimal edge set of~$D$ witnessing~${v \sim w}$ is finite and non-empty. 
\end{lemma}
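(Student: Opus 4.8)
The plan is to prove the statement by contradiction: suppose $D$ is rayless and finitely separable, $v \sim w$ are distinct, and let $W$ be a minimal witness for $v \sim w$ that is infinite. Since $v$ and $w$ are distinct and $D$ is finitely separable, $W$ is non-empty, so it only remains to derive a contradiction from infinitude. By Corollary~\ref{cor:min-witness-strongly-connected}, the subdigraph $D[W]$ induced by a \emph{finite} minimal witness would be strongly connected; here I want to extract structural information from an infinite $W$ instead. The first step is to understand $\Un(D[W])$: since $W$ is infinite and $D$ is rayless, $\Un(D[W])$ is an infinite rayless multigraph, but it need not be connected, so I would first reduce to understanding its components and the way they sit inside $D$.

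The key idea is to apply the star--comb lemma (Lemma~\ref{lem:star-comb}) inside $\Un(D[W])$, or rather inside a suitable connected supergraph, to the vertex set $V(D[W])$. If $\Un(D[W])$ is infinite and rayless, then by Corollary~\ref{cor:fin-sep+rayless=no_inf-bond} combined with Lemma~\ref{lem:2-block} every $2$-block is finite, hence $\Un(D[W])$ must have infinitely many $2$-blocks arranged in an infinite $2$-block-cutvertex-tree; since there is no ray, this tree has a vertex of infinite degree, which produces a vertex $c$ lying in infinitely many $2$-blocks, i.e.\ infinitely many internally disjoint paths in $D[W]$ emanating from $c$ to distinct "ends" of the block structure. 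More directly: applying Lemma~\ref{lem:star-comb} to $V(D[W])$ inside $\Un(D[W])$ (after passing to one infinite component) and using raylessness to exclude the comb, we obtain a subdivided star $S$ in $\Un(D[W])$ with infinitely many leaves and centre $c$. The edges of $S$ all lie in $W$.

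Now I would use minimality of $W$ to derive the contradiction. The point is that a subdivided star with infinitely many leaves is "wasteful": most of its edges can be removed without destroying the witness property, contradicting minimality. Concretely, for each leaf $\ell$ of $S$, let $W_\ell$ be $W$ minus the edges of the $c$--$\ell$ arm of $S$ except possibly one. By Lemma~\ref{lem:common-witness}, $W$ witnesses $v \sim c$ and $v \sim \ell$ for all these leaves. I would argue that removing all but finitely many arms still leaves a witness: any finite cut $E(M,N)$ separating $v$ and $w$ is met in both directions by $W$, and because the cut is finite it can intersect only finitely many of the (internally disjoint) arms of $S$; so all but finitely many arms are entirely contained in one side of $E(M,N)$ and their edges are irrelevant to this particular cut — but since $W$ witnesses all the equivalences $v \sim \ell$, one shows the surviving portion $W'$ still meets $E(M,N)$ in both directions, the bookkeeping being very much in the spirit of the proof of Lemma~\ref{lem:common-witness} (push the cut into the relevant side via $E(X \cap M, Y \cup N)$-type modifications). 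This contradicts the minimality of $W$, since $W'$ is a proper subset omitting, say, some edge incident with a discarded leaf.

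The main obstacle I anticipate is the last step: making precise that discarding infinitely many arms of the subdivided star still yields a witness. One has to rule out the possibility that the deleted edges are collectively essential even though each finite cut only "sees" finitely many of them; the resolution is that witnessing is a conjunction of conditions indexed by finite cuts, each of which is satisfied by the truncated set, so one can safely delete all arms at once — but care is needed because the arms share the centre $c$ and because $D[W]$ need not be connected, so one may first have to restrict attention to the infinite component of $\Un(D[W])$ containing the star and argue that the other components (each being rayless hence, after the same analysis, not producing new obstructions) can be handled in parallel or are already finite. A clean way to package this is to first show $\Un(D[W])$ has only finitely many components and each is rayless with finite $2$-blocks, so that an infinite $W$ forces an infinite $2$-block-cutvertex-tree in some component, then run the star argument there.
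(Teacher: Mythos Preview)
Your approach has a genuine gap at exactly the point you flag, and I do not see how to close it with the star--comb argument you outline. You want to show that after deleting infinitely many arms of the subdivided star~$S$ from~$W$ the resulting set~$W'$ is still a witness; your argument is that any single finite cut~$C$ separating~$v$ and~$w$ meets only finitely many arms, so arms disjoint from~$C$ are irrelevant \emph{for that cut}. But this only yields, for each~$C$, a (cut-dependent) cofinite set of removable arms; to obtain a single~$W' \subsetneq W$ you would need one arm that is disjoint from \emph{every} finite cut separating~$v$ and~$w$. Minimality of~$W$, however, says precisely that no edge of~$W$ --- in particular no edge on any arm --- can be removed, i.e.\ every edge of~$W$ lies in some finite cut separating~$v$ and~$w$ for which it is the unique~$W$-edge in one direction. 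So every arm is essential for some cut, and the ``remove arms'' step cannot succeed as stated. The vague appeal to Lemma~\ref{lem:common-witness}-style bookkeeping does not repair this: that lemma lets you push a single cut to one side, but gives no uniform control over the infinitely many cuts that collectively pin down every edge of~$W$.

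The paper's proof avoids all of this by working in~$D$ rather than in~$D[W]$. It looks at the $2$-block--cutvertex tree~$T$ of~$D$ and the finite path~$P$ in~$T$ between a block containing~$v$ and a block containing~$w$. Every finite bond of~$D$ separating~$v$ and~$w$ lies in one of the finitely many $2$-blocks on~$P$ (Remark~\ref{rem:bonds-in-blocks}); from minimality of~$W$ one deduces that every edge of~$W$ lies in one of those blocks as well --- an edge outside them can be removed because any finite cut separating~$v$ and~$w$ can be modified, by absorbing the offending component of~$D-c$ for the relevant cutvertex~$c$, into a smaller such cut avoiding that edge. Since~$D$ is rayless and finitely separable, each of those finitely many $2$-blocks is finite by Lemma~\ref{lem:2-block}\ref{item:2-block-rayless}, and hence~$W$ is finite. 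This is a two-line containment argument once you see where to look; the star--comb machinery and the analysis of components of~$\Un(D[W])$ are unnecessary.
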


\begin{proof}
    Let~${W \subseteq E(D)}$ be an inclusion-minimal witness for~${v \sim w}$. 
    Note that since~$D$ is finitely separable,~$W$ is non-empty. 
    Let us consider the $2$-block-cutvertex tree~$T$ of~$D$. 
    Let~$P$ denote the finite path in~$T$ whose endvertices are the $2$-blocks of~$D$ containing~$v$ and~$w$, respectively. 
    By Remark~\ref{rem:bonds-in-blocks}, each bond of~$D$ separating~$v$ and~$w$ is a bond of the finitely many $2$-blocks corresponding to the vertices of~$P$.
    This implies that all edges in~$W$ are contained in the finitely many $2$-blocks which correspond to vertices of~$P$.
    However, each $2$-block of~$D$ is finite since~$D$ is finitely separable and rayless 
    and such multigraphs do not have infinite $2$-blocks by Lemma~\ref{lem:2-block}\ref{item:2-block-rayless}. 
    So~$W$ is contained in a finite set and thus finite itself. 
\end{proof}

\begin{proposition}
    \label{prop:rayless-quotient}
    If~$D$ is rayless, then so is~${D{/}{\sim}}$.
\end{proposition}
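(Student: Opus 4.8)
The plan is to first reduce to the finitely separable case, then prove a single combinatorial lemma about $\sim$-classes, and finally lift a hypothetical ray of $D{/}{\sim}$ to $D$.

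\emph{Reduction.} If $D$ is rayless, then by Remark~\ref{rem:rayless-quotient-finsep} so is $D{/}{\approx}$, which is moreover weakly connected by Proposition~\ref{prop:quotient}\ref{item:quo-weakly-con} and finitely separable by Proposition~\ref{prop:quotient}\ref{item:quo-fin-disep} (applied to the set of finite bonds). Since $(D{/}{\approx}){/}{\sim} = D{/}{\sim}$ by Remark~\ref{rem:rayless-double-quotient}, it suffices to prove that $D{/}{\sim}$ is rayless under the extra assumption that $D$ is finitely separable, which I assume from now on.

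\emph{Key lemma.} I would show that any two vertices $x,y$ in a common $\sim$-class are joined in $D$ by a \emph{finite} path all of whose vertices lie in that class. If $x=y$ this is trivial, so assume $x\neq y$ and pick an inclusion-minimal witness $W$ for $x\sim y$. By Lemma~\ref{lem:rayless-finite} the set $W$ is finite and non-empty; by Lemma~\ref{lem:common-witness} every vertex of $D[W]$ is $\sim$-equivalent to $x$; and by Corollary~\ref{cor:min-witness-strongly-connected} the digraph $D[W]$ is strongly connected, so its underlying graph is connected. It remains to check that $x,y\in V(D[W])$. Suppose, say, $x\notin V(D[W])$. Since the set $S:=V(D[W])\cup\{y\}$ is finite and does not contain $x$, finite separability gives for each $s\in S$ a finite cut $E(X_s,Y_s)$ with $x\in X_s$ and $s\in Y_s$; setting $X:=\bigcap_{s\in S}X_s$ and $Y:=V(D)\setminus X$ one gets $x\in X$, $S\subseteq Y$, and $E(X,Y)\subseteq\bigcup_{s\in S}E(X_s,Y_s)$, so $E(X,Y)$ is a finite cut separating $x$ and $y$. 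As all edges of $W$ have both endvertices in $V(D[W])\subseteq Y$, the set $W$ is disjoint from $E(X,Y)$, contradicting that $W$ witnesses $x\sim y$. Hence $x\in V(D[W])$, and by the symmetric argument $y\in V(D[W])$ too; now any $x$--$y$ path inside the finite connected graph $D[W]$ proves the lemma.

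\emph{Lifting the ray.} Assume for contradiction that $D{/}{\sim}$ contains a ray $[v_0]_\sim[v_1]_\sim\dotsb$. For each $i$, let $a_ib_{i+1}$ be the edge of this ray between $[v_i]_\sim$ and $[v_{i+1}]_\sim$ viewed in $D$, with $a_i\in[v_i]_\sim$ and $b_{i+1}\in[v_{i+1}]_\sim$. Then $a_i\sim b_i$ for every $i\geq 1$, so the key lemma yields a finite $b_i$--$a_i$ path $Q_i$ with $V(Q_i)\subseteq[v_i]_\sim$; let $Q_0$ be the trivial path at $a_0$. Concatenating $Q_0$, the edge $a_0b_1$, $Q_1$, the edge $a_1b_2$, $Q_2,\dotsc$ gives a one-way infinite walk in $D$: since the classes $[v_i]_\sim$ are pairwise distinct and hence pairwise disjoint, and since each $Q_i$ is a simple path contained in its own class while the connecting edges run strictly between consecutive classes, no vertex is repeated. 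Thus this walk is a ray in $D$, contradicting that $D$ is rayless, and the proposition follows.

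\emph{Main obstacle.} The only non-routine point is the step $x,y\in V(D[W])$ for a minimal witness $W$: the definition of a witness only restricts how $W$ meets finite cuts separating $x$ and $y$, so establishing that $x$ and $y$ are actually incident with edges of $W$ requires the finite-cut-intersection argument above, using both the finiteness of $W$ (Lemma~\ref{lem:rayless-finite}) and finite separability. Everything else is bookkeeping with the lemmas already available.
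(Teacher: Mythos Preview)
Your proof is correct and follows the same line as the paper's: reduce to the finitely separable case via Remarks~\ref{rem:rayless-double-quotient} and~\ref{rem:rayless-quotient-finsep}, take a minimal witness inside each $\sim$-class along a hypothetical ray, invoke Lemma~\ref{lem:rayless-finite}, Lemma~\ref{lem:common-witness} and Corollary~\ref{cor:min-witness-strongly-connected} to see these witnesses are finite, strongly connected and confined to their respective classes, and then concatenate paths through them with the connecting edges to produce a ray in~$D$. Your argument is in fact slightly more careful than the paper's: you explicitly verify via the finite-cut intersection argument that the two designated vertices $x,y$ actually lie in $V(D[W])$, a point the paper uses without comment when it selects a path in $W_i$ joining them.
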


\begin{proof}
    By Remarks~\ref{rem:rayless-double-quotient} and~\ref{rem:rayless-quotient-finsep} we may assume without loss of generality that~$D$ is finitely separable. 
    Suppose for a contradiction that~$D$ is rayless but~${R = [v_0]_\sim[v_1]_\sim \ldots}$ is a ray in~${D{/}{\sim}}$. 
    For each~${i \in \mathbb{N}}$ let~${v'_i \in [v_i]_\sim}$ and~${v''_{i+1} \in [v_{i+1}]_\sim}$ be the endvertices of the edge~${[v_{i}]_\sim [v_{i+1}]_\sim \in E(R)}$ seen in~$D$.
    Furthermore, let~$W_i$ be an inclusion-minimal witness for~${v''_{i} \sim v'_{i+1}}$ for every~${i \in \mathbb{N}}$ with~${i \geq 1}$.
    We know by Lemma~\ref{lem:common-witness} that each~$W_i$ is completely contained in~$[v_i]_\sim$.
    By Lemma~\ref{lem:rayless-finite} and Corollary~\ref{cor:min-witness-strongly-connected} each~$W_i$ is finite, non-empty, and strongly connected. 
    Since each~$W_i$ is completely contained in~${[v_i]_\sim}$, we get that 
   ~${W_i \cap W_j = \emptyset}$ holds for all~${i, j \in \mathbb{N}}$ with~${i \neq j}$.
    Let~${P_i}$ be a directed $v''_{i}$--$v'_{i+1}$~path that is contained in~$W_i$ for every~${i \in \mathbb{N}}$ with~${i \geq 1}$. 
    Now the union of these paths together with the edges between~$v'_i$ and~$v''_{i+1}$ 
    is a ray in~$D$, a contradiction. 
\end{proof}

\section{Comparing Conjecture~\ref{conj:LY-inf} with Conjecture~\ref{conj:LY-inf-nested}}
\label{sec:examples}

In this section we shall compare Conjecture~\ref{conj:LY-inf} with  Conjecture~\ref{conj:LY-inf-nested} more closely by looking at two examples. 
In both examples we will see an indication why Conjecture~\ref{conj:LY-inf-nested} might be properly stronger than Conjecture~\ref{conj:LY-inf}. 
To put it straight, both examples show the following: 

\vspace{5pt}

\begin{center}
    \emph{There exist finitary dijoins that are part of an optimal pair, but of no nested optimal pair.}
\end{center}

\vspace{5pt}

\noindent 
This is severely different from finite digraphs. 
There, we could always keep the dijoin~$F$ of any optimal pair~${(F, \mathcal{B})}$ and just iteratively `uncross' all dicuts of~$\mathcal{B}$,  yielding a set~$\mathcal{B}'$ of nested disjoint dicuts such that~${(F, \mathcal{B}')}$ is a nested optimal pair. 
We illustrate this uncrossing process in the proof of Lemma~\ref{lem:fin_parameter_equi}. 

Let us now describe the first example. 

\begin{example}
    \label{ex:zackenstrahl}
    Consider the infinite weakly connected digraph~$D_1$ depicted twice in Figure~\ref{fig:zackenstrahl}. 
    Before we analyse~$D_1$ in detail, let us define~$D_1$ properly.
    
    Let~${A = \{ a_i \, | \, i \in \mathbb{N} \}}$ and~${B = \{ b_i \, | \, i \in \mathbb{N} \}}$ be two disjoint countably infinite sets.
    Additionally, let~$r$ be some set which is neither contained in~$A$ nor in~$B$.
    Now we set
    \[
        {V(D_1) := A \cup B \cup \{ r \} }.
    \]
    Next we define the 
    sets~${E_1 := \{ a_i b_i  \, | \, i \in \mathbb{N} \}}$, 
    ${E_2 := \{ a_i b_{i+1}  \, | \, i \in \mathbb{N} \}}$ 
    and~${E_3 := \{ b_i r \, | \, i \in \mathbb{N} \}}$.
    We complete the definition of~$D_1$ by setting 
    \[
        {E(D_1) := E_1 \cup E_2 \cup E_3}.
    \]
    
    \begin{figure}[htbp]
        \centering
        \includegraphics{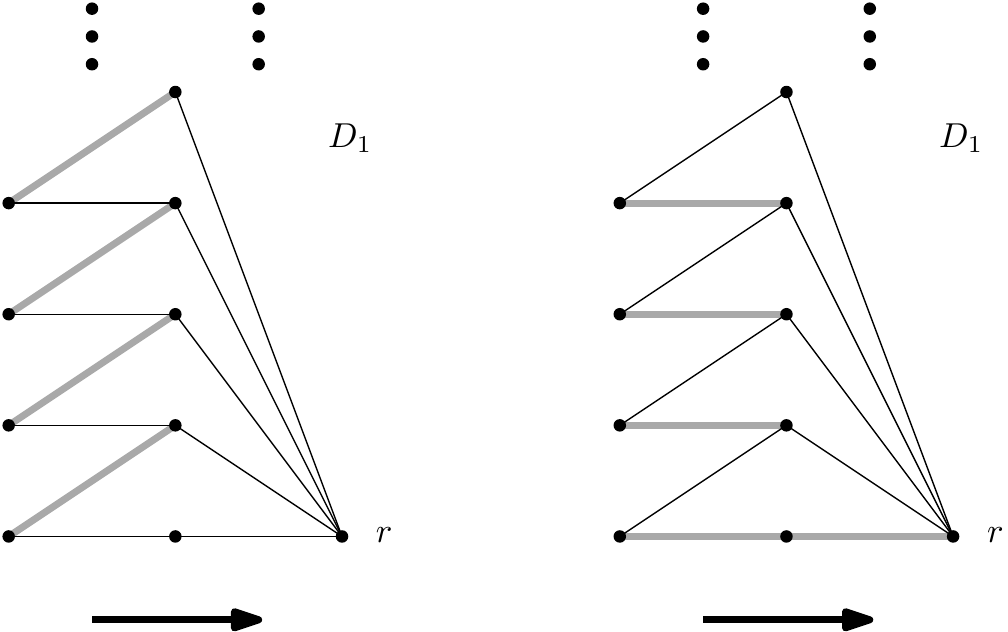}
        \caption{
            Two instances of the digraph~$D_1$. 
            All edges are meant to be directed from left to right. 
            The grey edges in the left instance of~$D_1$ form a finitary dijoin featuring in a nested optimal pair for~$D_1$. 
            The grey edges in the right instance form a finitary dijoin featuring in an optimal pair for~$D_1$, but not in any nested optimal pair for~$D_1$. }
        \label{fig:zackenstrahl}
    \end{figure}
    
    Next consider the set~$E_2$ of grey edges in the left instance of~$D_1$ depicted in Figure~\ref{fig:zackenstrahl}, call it~$F_L$. 
    It is easy to check that~$F_L$ forms a finitary dijoin of~$D_1$. 
    Furthermore, we can easily find a nested optimal pair for~$D_1$ in which~$F_L$ features.
    Hence,~$D_1$ is not a counterexample to Conjecture~\ref{conj:LY-inf-nested}.
    
    In the right instance of~$D_1$ depicted in Figure~\ref{fig:zackenstrahl}, the set of grey edges~${E_1 \cup \{ b_0 r \}}$, call it~$F_R$, also forms a finitary dijoin. 
    And again we can easily find an optimal pair for~$D_1$ in which~$F_R$ features.
    However, no matter which finite dicut we choose which contains the grey edge adjacent to~$r$, it cannot be nested with all the finite dicuts we choose for all the other edges of~$F_R$.
    Therefore,~$F_R$ does not feature in any nested optimal pair for~$D_1$.
\end{example}

Let us now consider another example, witnessing the same behaviour of finitary dijoins as Example~\ref{ex:zackenstrahl} does. 
However, the structure of the digraph~$D_2$ in the following example is rather different from~$D_1$. 
In particular,~$D_2$ is a locally finite digraph, i.e.~every vertex is incident with only finitely many edges.

\begin{example}
    \label{ex:stufengitter}
    Consider the infinite weakly connected digraph~$D_2$ depicted in Figure~\ref{fig:stufengitter}. 
    We first define vertex set of~$D_2$ as 
    \[
        {V(D_2) := \left \{ (x,y) \in \mathbb{Z} \times \mathbb{Z} \, \middle| \, \frac{x}{2} - y \leq 1 \right \}}.
    \]
    Note that for each~${(x,y) \in V(D_2)}$ both~${(x, y+1)}$ and~${(x-1, y)}$ are in~${V(D_2)}$ as well. 
    We define 
    ${E_1 := \{ (x, y+1) (x,y) \, | \, (x,y) \in V(D_2) \}}$ and 
    ${E_2 := \{ (x-1,y) (x,y) \, | \, (x,y) \in V(D_2) \}}$. 
    Finally, we define the edge set of~$D_2$ by 
    \[
        {E(D_2) := E_1 \cup E_2}.
    \]

    \begin{figure}[htbp]
        \centering
        \includegraphics{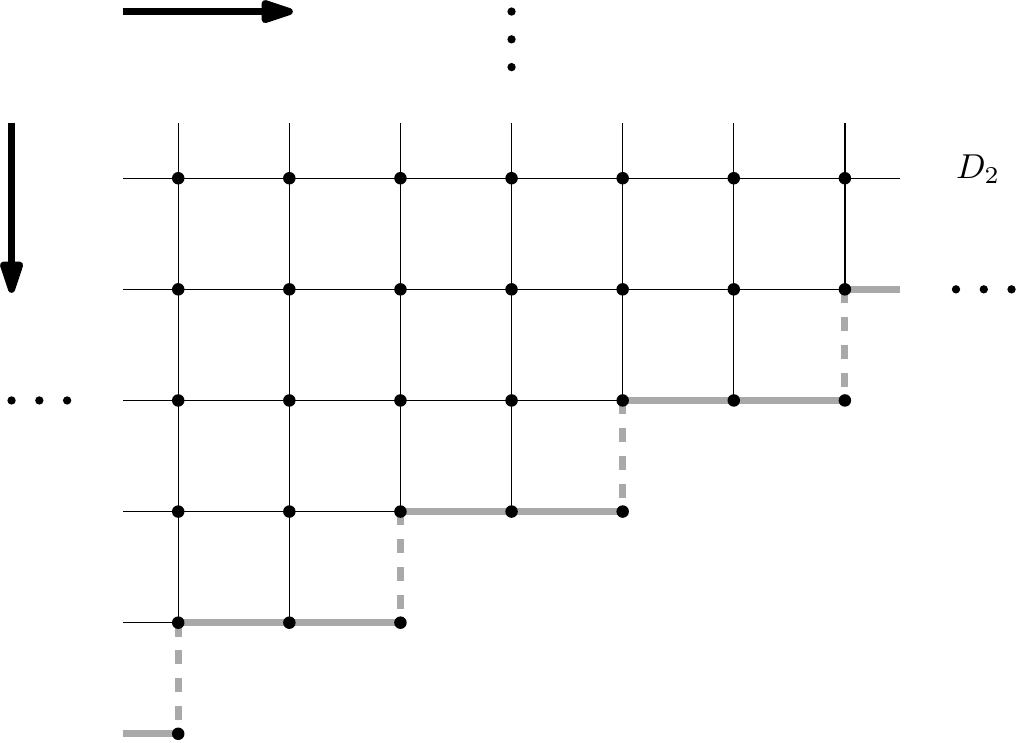}
        \caption{
            The digraph~$D_2$. 
            The edges are meant to be directed from left to right and from top to bottom. 
            The dashed grey edges form a finitary dijoin featuring in a nested optimal pair for~$D_2$. 
            The solid grey edges form a finitary dijoin featuring in an optimal pair for~$D_2$, but not in any nested optimal pair for~$D_2$.}
        \label{fig:stufengitter}
    \end{figure}
    
    Now consider the set of dashed grey edges in Figure~\ref{fig:stufengitter}, \[
        {F_d := \left\{  (x, y+1) (x,y) \, \middle| \, \frac{x}{2} - y = 1 \right\}}.
    \]
    It is an easy exercise to check that~$F_d$ forms a finitary dijoin of~$D_2$ which also features in a nested optimal pair for~$D_2$. 
    Therefore, the digraph~$D_2$ is also no counterexample to Conjecture~\ref{conj:LY-inf-nested}. 
    
    In contrast to this, let us now consider the set of uninterruptedly grey edges in Figure~\ref{fig:stufengitter}, 
    \[
        {F_s := \left\{ (x-1,y) (x,y) \, \middle| \, \frac{x}{2} - y = \frac{1}{2} \, \textnormal{ or } \, \frac{x}{2} - y = 1 \right\}}.
    \]
    Again it is easy to check that~$F_s$ forms a finitary dijoin of~$D_2$. 
    However,~$F_s$ is not part of any nested optimal pair for~$D_2$. 
    This is not difficult to prove using the fact that~$F_d$ is a finitary dijoin of~$D_2$ as well. 
    We leave this proof to the reader.
\end{example}

\section{Reductions for the Infinite Lucchesi-Younger Conjecture}
\label{sec:reduce}

In this section we prove some reductions for Conjecture~\ref{conj:LY-inf} and Conjecture~\ref{conj:LY-inf-nested} in the sense that it suffices to solve these conjectures on a smaller class of digraphs. 
We begin by reducing these conjectures to finitely diseparable digraphs via the following lemma.

\begin{lemma}
    \label{lem:fin-disep}
    Let~$D$ be a weakly connected digraph 
    and~$\mathfrak{B}$ be a 
    class of dibonds of~$D$.
    Then~${(F, \mathcal{B})}$ is a (nested) $\mathfrak{B}$-optimal pair for~$D$ if and only if it is a (nested) $\mathfrak{B}$-optimal pair for~${D{/}\equiv_\mathfrak{B}}$.
\end{lemma}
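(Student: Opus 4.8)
The plan is to exploit the structure established in Proposition~\ref{prop:quotient} together with the fact that $\mathfrak{B}$ consists of dibonds of $D$, so that the quotient by $\equiv_\mathfrak{B}$ does not disturb the relevant combinatorial data. The key observation is that $\mathfrak{B}$ is, by hypothesis, a class of dibonds of $D$, and each such dibond $B = \overrightarrow{E}(X,Y) \in \mathfrak{B}$ separates some pair of vertices; hence by Proposition~\ref{prop:quotient}\ref{item:quo-dicut-down} every $B \in \mathfrak{B}$ remains a dibond of $D{/}\equiv_\mathfrak{B}$ with the \emph{same underlying edge set}, since $E(X,Y) = E(\hat X, \hat Y)$. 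Conversely, I should check that $\mathfrak{B}$ really is the same class whether viewed inside $D$ or inside $D{/}\equiv_\mathfrak{B}$: a dibond of the quotient is a dibond of $D$ by Proposition~\ref{prop:quotient}\ref{item:quo-dicut-up}, but one has to be a little careful, because the statement of the lemma only claims an equivalence about a \emph{fixed} pair $(F,\mathcal{B})$, not that the two classes $\mathfrak{B}$ coincide literally — so the cleanest route is to verify each of the four defining conditions of a $\mathfrak{B}$-optimal pair separately, noting that each condition is phrased purely in terms of edges and set-membership among elements of $\mathfrak{B}$.

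First I would fix $(F,\mathcal{B})$ with $F \subseteq E(D)$ and $\mathcal{B} \subseteq \mathfrak{B}$, and observe that $E(D) = E(D{/}\equiv_\mathfrak{B})$ as sets (contraction of edges whose endpoints are identified deletes only loops, and by definition of $\equiv_\mathfrak{B}$ no edge of $D$ lies inside an equivalence class if it belongs to — wait, more carefully: edges strictly inside an equivalence class become loops and are deleted, so I should instead argue that no edge of any $B \in \mathfrak{B}$ becomes a loop, which is exactly Proposition~\ref{prop:quotient}\ref{item:quo-dicut-down}). To be safe I would phrase the argument so it only ever refers to edges lying in some member of $\mathfrak{B}$, since conditions (i)--(iv) only ever mention such edges. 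Then: condition (ii), that $\mathcal{B}$ is a set of pairwise disjoint dibonds in $\mathfrak{B}$, transfers verbatim because disjointness is an edge-set condition and membership in $\mathfrak{B}$ and being a dibond are preserved in both directions by Proposition~\ref{prop:quotient}\ref{item:quo-dicut-down},\ref{item:quo-dicut-up}. Conditions (iii) $F \subseteq \bigcup\mathcal{B}$ and (iv) $\abs{F \cap B} = 1$ are purely about edge sets and hence invariant. The only substantive point is condition (i): $F$ is a $\mathfrak{B}$-dijoin of $D$ iff $F \cap B \neq \emptyset$ for every $B \in \mathfrak{B}$, and $F$ is a $\mathfrak{B}$-dijoin of $D{/}\equiv_\mathfrak{B}$ iff $F \cap B \neq \emptyset$ for every dibond $B$ of $D{/}\equiv_\mathfrak{B}$ that lies in $\mathfrak{B}$ — so I need that $\mathfrak{B}$, as a class of dibonds of $D$, is identified with a class of dibonds of $D{/}\equiv_\mathfrak{B}$, which is precisely the content of Proposition~\ref{prop:quotient}\ref{item:quo-dicut-down} (every element of $\mathfrak{B}$ survives) — no new dibonds need to be added since $\mathfrak{B}$ is given as a class of dibonds of $D$ in both readings of the lemma.

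Finally, for the ``nested'' refinement I would invoke Proposition~\ref{prop:quotient}\ref{item:quo-dicut-nested}: two cuts in $\mathcal{B} \subseteq \mathfrak{B}$ are nested as cuts of $D$ if and only if they are nested as cuts of $D{/}\equiv_\mathfrak{B}$, so the pairwise-nestedness of $\mathcal{B}$ passes freely in both directions, and the parenthetical claim follows at once. I expect the main (and really the only) obstacle to be a purely bookkeeping one: making sure that the identification $E(D) \supseteq \bigcup\mathfrak{B}$ with the corresponding edges of $D{/}\equiv_\mathfrak{B}$ is stated correctly, i.e.\ that no edge used anywhere in conditions (i)--(iv) is destroyed by the contraction, which is guaranteed because all such edges lie in members of $\mathfrak{B}$ and those are dibonds of $D$, hence preserved by Proposition~\ref{prop:quotient}. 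Once that is pinned down, the proof is a short line-by-line check of the four conditions plus nestedness.
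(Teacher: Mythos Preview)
Your proposal is correct and follows essentially the same approach as the paper: both arguments rest entirely on Proposition~\ref{prop:quotient}, observing that every $B \in \mathfrak{B}$ is a dibond of $D{/}{\equiv_\mathfrak{B}}$ with the same edge set, that nestedness is preserved via part~\ref{item:quo-dicut-nested}, and---crucially---that $F$ survives the quotient because condition~(iii) forces $F \subseteq \bigcup \mathcal{B} \subseteq \bigcup \mathfrak{B}$, so no edge of $F$ becomes a loop. The paper's proof is terser but makes the identical moves; your more explicit line-by-line verification of conditions (i)--(iv) is a perfectly acceptable expansion of it.
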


\begin{proof}
    Note first that by Proposition~\ref{prop:quotient}, $D{/}{\equiv_\mathfrak{B}}$ is weakly connected and that~$\mathfrak{B}$ is also a set of dibonds of~${D{/}{\equiv_\mathfrak{B}}}$. 
    
    Suppose~${(F, \mathcal{B})}$ is a (nested) $\mathfrak{B}$-optimal pair for~$D$. 
    Then~$F$ is still a subset of~${E(D{/}{\equiv_\mathfrak{B}})}$ since each edge of~$F$ lies on some dibond~${B \in \mathcal{B} \subseteq \mathfrak{B}}$. 
    Hence,~$F$ is still a $\mathfrak{B}$-dijoin of~${D{/}{\equiv_\mathfrak{B}}}$, and~${(F, \mathcal{B})}$ is indeed a (nested) $\mathfrak{B}$-optimal pair for~${D{/}{\equiv_\mathfrak{B}}}$, again by Proposition~\ref{prop:quotient}. 
    
    Similarly, if~${(F, \mathcal{B})}$ is a (nested) $\mathfrak{B}$-optimal pair for~${D{/}\equiv_\mathfrak{B}}$, then so it is for~$D$, again by Proposition~\ref{prop:quotient}.
\end{proof}

The next reduction of Conjecture~\ref{conj:LY-inf} and Conjecture~\ref{conj:LY-inf-nested} tells us that we can restrict our attention also to digraphs whose underlying multigraph is $2$-connected.

\begin{lemma}
    \label{lem:2-con-L-Y}
    Let~$D$ be a weakly connected digraph and~$\mathfrak{B}$ be a 
    class of dibonds of~$D$. 
    Let~$\mathcal{X}$ denote the set of all $2$-blocks of~$D$. 
    Then the following statements are true. 
    \begin{enumerate}[label=\textit{(\roman*)}]
        \item \label{item:2-conn-0}
            {For each~${X \in \mathcal{X}}$ the set~${\mathfrak{B}_X := \{ B \in \mathfrak{B} \, | \, B \subseteq E(X) \}}$ is a 
            class of dibonds of~$X$ 
            and~${\mathfrak{B} = \dot{\bigcup}_{X \in \mathcal{X}} \mathfrak{B}_X}$. 
            Moreover, if~$\mathfrak{B}$ is finitely corner-closed, then so is~${\mathfrak{B}_X}$.}
        \item \label{item:2-conn-1} 
            {If~${(F, \mathcal{B})}$ is a (nested) $\mathfrak{B}$-optimal pair for~$D$, 
            then~${(F_X, \mathcal{B}_X)}$ is a (nested) $\mathfrak{B}_X$-optimal pair for every~${X \in \mathcal{X}}$, 
            where~${F_X := F \cap E(X)}$ and~${\mathcal{B}_X := \{ B \in \mathcal{B} \, | \, B \subseteq E(X) \}}$.}
        \item \label{item:2-conn-2} 
            {If~${(F_X, \mathcal{B}_X)}$ is a (nested) $\mathfrak{B}_X$-optimal pair for every~${X \in \mathcal{X}}$, 
            then~${(F, \mathcal{B})}$ is a (nested) $\mathfrak{B}$-optimal pair for~$D$, 
            where~${F := \bigcup_{X \in \mathcal{X}} F_X}$ and~${\mathcal{B} := \bigcup_{X \in \mathcal{X}} \mathcal{B}_X}$.}
    \end{enumerate}
\end{lemma}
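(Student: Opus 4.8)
\textbf{Proof plan for Lemma~\ref{lem:2-con-L-Y}.}

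The plan is to prove the three statements in order, using the $2$-block-cutvertex-tree of $D$ and Remark~\ref{rem:bonds-in-blocks} as the organising principles. The key structural fact that drives everything is Remark~\ref{rem:bonds-in-blocks}: every dibond of $D$, being a bond, lies inside a unique $2$-block, and dibonds living in distinct $2$-blocks are automatically nested. For~\ref{item:2-conn-0}, I would first observe that if $B \in \mathfrak{B}$ then $B \subseteq E(X)$ for exactly one $X \in \mathcal{X}$, so the sets $\mathfrak{B}_X$ partition $\mathfrak{B}$ as claimed; I should also check that a set $B \subseteq E(X)$ which is a dibond of $D$ is still a dibond of $X$ and conversely, which follows because $X$ is an induced subdigraph on a weakly connected vertex set and deleting the rest of $D$ across a cutvertex does not disconnect either shore of $B$ inside $X$ (and dicut-ness is a purely local condition on the edges of $B$). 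For the corner-closed clause I would note that $B_1 \wedge B_2$ and $B_1 \vee B_2$ for $B_1, B_2 \in \mathfrak{B}_X$ are again supported on $E(X)$ (their edges are among those of $B_1 \cup B_2 \subseteq E(X)$, using Remark~\ref{rem:corners2}(1) applied with $F = E(D)$), hence any decomposition of them into members of $\mathfrak{B}$ actually uses only members of $\mathfrak{B}_X$.

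For~\ref{item:2-conn-1}, given a (nested) $\mathfrak{B}$-optimal pair $(F,\mathcal{B})$ I would verify the four defining properties for $(F_X, \mathcal{B}_X)$ one at a time. That $F_X$ is a $\mathfrak{B}_X$-dijoin is immediate since every $B \in \mathfrak{B}_X$ is a member of $\mathfrak{B}$ met by $F$, and $F \cap B = F_X \cap B$ because $B \subseteq E(X)$. That $\mathcal{B}_X$ is a set of disjoint dibonds in $\mathfrak{B}_X$ is inherited from $\mathcal{B}$. For $F_X \subseteq \bigcup \mathcal{B}_X$: an edge $e \in F_X$ lies in some $B \in \mathcal{B}$, and since $e \in E(X)$ and $B$ lies in a unique $2$-block, that $2$-block must be $X$, so $B \in \mathcal{B}_X$. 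The cardinality condition $\abs{F_X \cap B} = 1$ for $B \in \mathcal{B}_X$ is exactly $\abs{F \cap B} = 1$. Nestedness of $\mathcal{B}_X$ is inherited from nestedness of $\mathcal{B}$.

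For~\ref{item:2-conn-2}, the converse, I would again check the four properties for $(F, \mathcal{B})$ built by taking unions over $X \in \mathcal{X}$. The dijoin property uses~\ref{item:2-conn-0}: any $B \in \mathfrak{B}$ lies in some $\mathfrak{B}_X$, hence is met by $F_X \subseteq F$. Disjointness of the dibonds in $\mathcal{B} = \bigcup_X \mathcal{B}_X$ needs a short argument: two dibonds from the same $\mathcal{B}_X$ are disjoint by hypothesis, while two from different blocks $X \neq X'$ share no edges because $E(X) \cap E(X') = \emptyset$ in the $2$-block decomposition (two $2$-blocks meet in at most a cutvertex, no edge). The containment $F \subseteq \bigcup \mathcal{B}$ is the union of the containments $F_X \subseteq \bigcup \mathcal{B}_X$. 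For the cardinality condition, a $B \in \mathcal{B}$ lies in a unique $\mathcal{B}_X$, and $F \cap B = F_X \cap B$ since edges of $F$ from other blocks miss $E(X)$; so $\abs{F \cap B} = \abs{F_X \cap B} = 1$. Finally, nestedness of $\mathcal{B}$ in the nested case: two dibonds in the same $\mathcal{B}_X$ are nested by hypothesis, and two in different $2$-blocks are nested by Remark~\ref{rem:bonds-in-blocks}\ref{item:bonds-in-blocks2}.

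The routine steps are the four-property verifications; the one spot that genuinely needs care --- and which I expect to be the main obstacle --- is the claim in~\ref{item:2-conn-0} that ``is a dibond of $D$ supported in $E(X)$'' coincides with ``is a dibond of $X$''. The subtlety is that a cut of $X$ need not extend to a cut of $D$ in a canonical way, but a \emph{bond} of $X$ does, because attaching the rest of $D$ at cutvertices keeps both shores weakly connected; I would spell this out via the $2$-block-cutvertex-tree, assigning each component of $D - E(X)$ hanging off a cutvertex $c \in V(X)$ to the side of $c$. The dicut condition then transfers for free since it only constrains the orientation of the edges of $B$ themselves.
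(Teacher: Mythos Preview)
Your proposal is correct and follows essentially the same route as the paper's proof, which likewise rests on Remark~\ref{rem:bonds-in-blocks} and then verifies the defining properties of an optimal pair by direct translation. In fact you are more careful than the paper on the one genuinely delicate point --- the identification of dibonds of~$D$ contained in~$E(X)$ with dibonds of~$X$ --- which the paper dispatches with a one-line ``every dibond of~$X$ is a dibond of~$D$ as well'' and ``Statement~\ref{item:2-conn-0} is now easy to check''.
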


\begin{proof}
    Let~$X$ be a $2$-block of~$D$. 
    By Remark~\ref{rem:bonds-in-blocks} every dibond~${B \in \mathfrak{B}}$ is either contained in~$E(X)$ and hence a dibond of~$X$, or disjoint to~$E(X)$. 
    Vice versa, every dibond of~$X$ is a dibond of~$D$ as well. 
    Statement~\ref{item:2-conn-0} is now easy to check. 
    
    For statement~\ref{item:2-conn-1}, 
    let~${X \in \mathcal{X}}$ and let~${(F, \mathcal{B})}$ be a (nested) $\mathfrak{B}$-optimal pair for~$D$. 
    Then by just translating the definitions we obtain that~${(F \cap E(X), \mathcal{B}_X)}$ is a (nested) $\mathfrak{B}_X$-optimal pair for~$D$, as well as for~$X$. 
    
    Now we show that statement~\ref{item:2-conn-2} is true.
    So let us assume that~${(F_X, \mathcal{B}_X)}$ is a (nested) $\mathfrak{B}_X$-optimal pair for every~${X \in \mathcal{X}}$. 
    With statement~\ref{item:2-conn-0} (and Remark~\ref{rem:bonds-in-blocks}\ref{item:bonds-in-blocks2}) we immediately get that with~${(F, \mathcal{B})}$ is a (nested) $\mathfrak{B}$-optimal pair for~$D$. 
\end{proof}

We can now close this section by proving Theorem~\ref{thm:reduction}.
In order to do this we basically only need to combine Lemma~\ref{lem:fin-disep} and Lemma~\ref{lem:2-con-L-Y}.
Let us restate the theorem.

\setcounter{section}{1}
\setcounter{thm}{5}

\begin{theorem}
    If Conjecture~\ref{conj:LY-inf} (or Conjecture~\ref{conj:LY-inf-nested}, respectively) holds for all countable finitely diseparable digraphs whose underlying multigraph is $2$-connected, then Conjecture~\ref{conj:LY-inf} (or Conjecture~\ref{conj:LY-inf-nested}, respectively) holds for all weakly connected digraphs. 
\end{theorem}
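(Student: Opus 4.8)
The idea is to chain together the two reduction lemmas just proved. Let $D$ be an arbitrary weakly connected digraph and take $\mathfrak{B} = \mathfrak{B}_\textnormal{fin}$ to be the class of all finite dibonds of $D$. First I would pass to the quotient $D' := D{/}{\equiv_{\mathfrak{B}_\textnormal{fin}}} = D{/}{\sim}$. By Lemma~\ref{lem:fin-disep}, a (nested) optimal pair for $D'$ is the same thing as a (nested) optimal pair for $D$, so it suffices to produce one for $D'$. By Proposition~\ref{prop:quotient}\ref{item:quo-weakly-con} and~\ref{item:quo-fin-disep}, $D'$ is weakly connected and $\mathfrak{B}_\textnormal{fin}$-separable; since $\mathfrak{B}_\textnormal{fin}$ is exactly the set of finite dibonds, this says $D'$ is finitely diseparable.

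\textbf{Splitting into $2$-blocks.} Next I would apply Lemma~\ref{lem:2-con-L-Y} to $D'$ with the class $\mathfrak{B}_\textnormal{fin}$ of finite dibonds (which is finitely corner-closed, as noted in the excerpt). Let $\mathcal{X}$ be the set of $2$-blocks of $D'$. By part~\ref{item:2-conn-0}, each $\mathfrak{B}_X$ is the class of finite dibonds of $X$, and by parts~\ref{item:2-conn-1} and~\ref{item:2-conn-2} it suffices to find a (nested) optimal pair for each $X \in \mathcal{X}$ separately and then take their union. Each such $X$ is a $2$-block, hence its underlying multigraph is $2$-connected (or, in the degenerate case, a bundle of parallel edges between two vertices — but such a multigraph has no nonempty dibond, so the empty pair works trivially and we may assume genuine $2$-connectedness). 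Moreover $X$ inherits finite diseparability from $D'$: any two vertices of $X$ are separated by a finite dibond of $D'$, which by Remark~\ref{rem:bonds-in-blocks} lies inside the $2$-block containing it; since $X$ is $2$-connected the separating bond must lie in $X$ itself. Finally, by Corollary~\ref{cor:countable_fin-disep_blocks} (applied with $\mathcal{B}$ the set of finite bonds, noting finite dibonds are in particular finite bonds) — or directly by Lemma~\ref{lem:2-block}\ref{item:2-block-countable} since $X$ is finitely separable — each $X$ is countable.

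\textbf{Conclusion.} Thus each $2$-block $X$ of $D'$ is a countable, finitely diseparable digraph whose underlying multigraph is $2$-connected. By hypothesis, Conjecture~\ref{conj:LY-inf} (respectively Conjecture~\ref{conj:LY-inf-nested}) holds for $X$, so $X$ admits a (nested) optimal pair. Taking the union of these pairs over all $X \in \mathcal{X}$ gives a (nested) optimal pair for $D'$ by Lemma~\ref{lem:2-con-L-Y}\ref{item:2-conn-2}, and then transporting back through Lemma~\ref{lem:fin-disep} gives a (nested) optimal pair for $D$.

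\textbf{Main obstacle.} The routine reductions are already packaged into the lemmas, so the only real care needed is the bookkeeping about \emph{which} notion of optimal pair transfers: one must check that the class of finite dibonds of $D$ coincides with that of $D{/}{\sim}$ (established via Proposition~\ref{prop:quotient}) and that taking $2$-blocks does not create new finite dibonds or destroy existing ones (Remark~\ref{rem:bonds-in-blocks} together with Lemma~\ref{lem:2-con-L-Y}\ref{item:2-conn-0}). The potentially delicate point is verifying that $D'$, and hence each of its $2$-blocks, is genuinely \emph{finitely diseparable} rather than merely finitely separable — but this is precisely the content of Proposition~\ref{prop:quotient}\ref{item:quo-fin-disep} applied with $\mathfrak{B} = \mathfrak{B}_\textnormal{fin}$, so no new work is required.
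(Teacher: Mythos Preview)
Your proposal is correct and follows essentially the same route as the paper: pass to the quotient $D{/}{\equiv_{\mathfrak{B}_\textnormal{fin}}}$ via Lemma~\ref{lem:fin-disep} and Proposition~\ref{prop:quotient}, observe its $2$-blocks are countable via Corollary~\ref{cor:countable_fin-disep_blocks}, apply the hypothesis to each block, and reassemble via Lemma~\ref{lem:2-con-L-Y}. One tiny slip: a $2$-block consisting of finitely many parallel edges all oriented the same way \emph{does} have a nonempty finite dibond, so the empty pair does not work there --- but the optimal pair is still trivial (one edge, one dibond), so this does not affect the argument.
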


\setcounter{section}{4}
\setcounter{thm}{2}

\begin{proof}
    Let~$D$ be any weakly connected digraph 
    and let~${\mathfrak{B}_\textnormal{fin}}$ the set of finite dibonds of~$D$.
    We know by Proposition~\ref{prop:quotient} that~${D{/}{\equiv_{\mathfrak{B}_\textnormal{fin}}}}$ is a weakly connected and finitely diseparable digraph, and so is every $2$-block of it.
    Furthermore, Corollary~\ref{cor:countable_fin-disep_blocks} yields that each $2$-block of~${D{/}{\equiv_{\mathfrak{B}_\textnormal{fin}}}}$ is countable.
    By our assumption we know that Conjecture~\ref{conj:LY-inf} (or Conjecture~\ref{conj:LY-inf-nested}, respectively) holds for every countable $2$-block of~${D{/}{\equiv_{\mathfrak{B}_\textnormal{fin}}}}$.
    So using Lemma~\ref{lem:2-con-L-Y} we obtain a (nested) optimal pair for~${D{/}{\equiv_{\mathfrak{B}_\textnormal{fin}}}}$.
    Then we also obtain an optimal pair for~$D$ by Lemma~\ref{lem:fin-disep}.
\end{proof}

\section{Special cases}
\label{sec:cases}

In this section we prove some special cases of Conjecture~\ref{conj:LY-inf-nested}, or more precisely cases of Question~\ref{q:general-LY}. 

\subsection{Finite parameters}

Let~$D$ be a 
weakly connected 
digraph. 
Let~$\mathfrak{B}$ be a 
class of finite dibonds of~$D$.
Before we come to the first special case, we state a basic observation.

\begin{lemma}
    \label{lem:fin_parameter_equi}
    The following statements are equivalent:
    \begin{enumerate}[label=\textit{(\roman*)}]
        \item \label{item:fpe-1} 
            {$D$ admits a $\mathfrak{B}$-dijoin of finite size.}
        \item \label{item:fpe-2} 
            {The maximal number of disjoint dibonds in~$\mathfrak{B}$ is finite.}
    \end{enumerate}
    If~$\mathfrak{B}$ is finitely corner-closed, then~\ref{item:fpe-1} and~\ref{item:fpe-2} are also equivalent with the following statement:
    \begin{enumerate}[label=\textit{(\roman*)}, resume]
        \item \label{item:fpe-3} 
            {The maximal number of disjoint and pairwise nested dibonds in~$\mathfrak{B}$ is finite.}
    \end{enumerate}
\end{lemma}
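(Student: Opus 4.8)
The plan is to prove the cycle of implications in the order (i)$\Rightarrow$(ii), (ii)$\Rightarrow$(i), and then — under the assumption that $\mathfrak{B}$ is finitely corner-closed — (ii)$\Leftrightarrow$(iii). The implication (iii)$\Rightarrow$(ii) is the only nontrivial direction of the last equivalence, since the maximal number of disjoint \emph{and pairwise nested} dibonds is trivially bounded by the maximal number of disjoint ones.

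First I would handle (i)$\Rightarrow$(ii). Suppose $F$ is a $\mathfrak{B}$-dijoin with $\abs{F} = n < \infty$, and suppose $\{B_1, \dots, B_m\}$ is a collection of pairwise disjoint dibonds in $\mathfrak{B}$. Each $B_k$ meets $F$, so we may pick $e_k \in F \cap B_k$; since the $B_k$ are pairwise disjoint, the $e_k$ are distinct, so $m \leq n$. Hence the maximal number of disjoint dibonds in $\mathfrak{B}$ is at most $n$.

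Next, (ii)$\Rightarrow$(i). Suppose the maximal number of pairwise disjoint dibonds in $\mathfrak{B}$ is some finite $n$, realised by a collection $\{B_1, \dots, B_n\}$. I would take $F := B_1 \cup \dots \cup B_n$, which is finite since each $B_i$ is a finite dibond. The claim is that $F$ is a $\mathfrak{B}$-dijoin. If not, there is some $B \in \mathfrak{B}$ with $B \cap F = \emptyset$, i.e.\ $B$ is disjoint from each $B_i$; then $\{B, B_1, \dots, B_n\}$ is a collection of $n+1$ pairwise disjoint dibonds in $\mathfrak{B}$, contradicting maximality. So $F$ is a finite $\mathfrak{B}$-dijoin.

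Finally, assume $\mathfrak{B}$ is finitely corner-closed and prove (iii)$\Rightarrow$(ii). This is the step I expect to be the main obstacle: it is essentially the ``uncrossing'' argument promised in Section~\ref{sec:examples}, carried out for a \emph{finite} family. Suppose towards a contradiction that (ii) fails, so for every $N$ there exist $N$ pairwise disjoint dibonds in $\mathfrak{B}$; I want to produce arbitrarily large pairwise disjoint \emph{and nested} subfamilies, contradicting (iii). Given $N$ pairwise disjoint dibonds $\{B_1, \dots, B_N\} \subseteq \mathfrak{B}$, I would iteratively uncross a crossing pair $B_i, B_j$ by replacing them with $B_i \wedge B_j$ and $B_i \vee B_j$. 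By Remark~\ref{rem:corners1} these are dicuts, and by Remark~\ref{rem:corners2} they are disjoint from each other and (one checks, using that $\insh(B_i \wedge B_j) \subseteq \insh(B_i), \insh(B_j) \subseteq \insh(B_i \vee B_j)$) disjoint from all the remaining $B_k$; moreover each of $B_i \wedge B_j$, $B_i \vee B_j$ is nested with every cut that was already nested with both $B_i$ and $B_j$. Since $\mathfrak{B}$ is finitely corner-closed, $B_i \wedge B_j$ and $B_i \vee B_j$ each decompose as disjoint unions of members of $\mathfrak{B}$, and picking one nonempty block from each such decomposition (discarding a corner entirely if it is empty) keeps us inside $\mathfrak{B}$ while preserving disjointness and the nestedness relations with the untouched $B_k$. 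The only subtlety is that this operation might decrease the count when a corner is empty; but $B_i \wedge B_j$ and $B_i \vee B_j$ cannot \emph{both} be empty when $B_i, B_j$ are nonempty and crossing, and a standard potential-function argument (e.g.\ strictly decreasing the total number of crossing pairs, as in the finite Lucchesi--Younger uncrossing) shows the process terminates. A cleaner route, which I would adopt if the bookkeeping above gets heavy, is this: by (ii)$\Leftrightarrow$(i) it suffices to derive (i) from (iii), so assume instead that \emph{every} finite $\mathfrak{B}$-dijoin fails, i.e.\ there is no finite $\mathfrak{B}$-dijoin; then as in (ii)$\Rightarrow$(i) one builds, for each $N$, a family of $N$ pairwise disjoint dibonds, and applies the uncrossing just described to extract $N$ pairwise disjoint nested ones. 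In either formulation the heart of the matter is verifying that finite corner-closedness lets uncrossing stay within $\mathfrak{B}$ without losing the disjointness count — this is where the finiteness of the family is essential, since an infinite uncrossing process need not terminate (precisely the phenomenon illustrated by Examples~\ref{ex:zackenstrahl} and~\ref{ex:stufengitter}).
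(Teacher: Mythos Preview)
Your treatment of (i)$\Leftrightarrow$(ii) is correct and matches the paper's argument. The trivial direction (ii)$\Rightarrow$(iii) is also correctly identified.

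The gap is in your uncrossing argument for (iii)$\Rightarrow$(ii). You assert that replacing a crossing pair $B_i, B_j$ by (dibond components of) their corners ``strictly decreases the total number of crossing pairs'', but this is not true in general: if some $B_k$ crosses $B_i$ and is nested with $B_j$, then $B_k$ may cross \emph{both} corners, so one uncrossing step can trade one crossing pair for two. The usual potential that does work --- e.g.\ $\sum_i \abs{\insh(B_i)}^2$ computed in the finite minor $D.(\bigcup_i B_i)$ --- makes sense only if you keep the full corners $B_i \wedge B_j$, $B_i \vee B_j$ as dicuts; but staying inside $\mathfrak{B}$ forces you to pass to a dibond component of each corner, and at that point the potential is no longer monotone. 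You would further need that the iterated corners remain in $\mathfrak{B}^\oplus$, which does not follow directly from corner-closedness of $\mathfrak{B}$ either. So as written the process has neither a verified invariant nor a verified termination measure.

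The paper sidesteps this entirely by proving (iii)$\Rightarrow$(i) with a different minimality setup. Fix a maximum-size nested disjoint family $\mathcal{B} \subseteq \mathfrak{B}$ and suppose some $A \in \mathfrak{B}$ is disjoint from $\bigcup \mathcal{B}$; among all such pairs $(\mathcal{B}, A)$, choose one minimising the number of elements of $\mathcal{B}$ that cross $A$. Now pick $B \in \mathcal{B}$ crossing $A$ whose in-shore (or out-shore) is inclusion-minimal among the crossing members. Because $A$ and $B$ are both dibonds and cross, the out-shore of $A \wedge B$ is weakly connected, so its dibond decomposition is governed by the components of its in-shore; the minimality of $\insh(B)$ then lets one check that every dibond in that decomposition is nested with every member of $\mathcal{B}$, and that $A \wedge B$ is itself a single dibond in $\mathfrak{B}$ (else $\mathcal{B}$ was not maximum). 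Replacing $B$ by $A \wedge B$ and $A$ by a dibond component of $A \vee B$ yields a new pair with strictly fewer crossings, contradicting the choice. The point is that only \emph{one} dibond $A$ is being uncrossed against an already nested family, and the extremal choice of $B$ is what guarantees the replacement stays nested --- this is precisely the bookkeeping your sketch defers.
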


\begin{proof}
    We start by proving the implication from~\ref{item:fpe-1} to~\ref{item:fpe-2}. 
    Let~$F$ be a $\mathfrak{B}$-dijoin of~$D$ of finite size. 
    Then, by definition, we can find at most~$\abs{F}$ many disjoint dibonds in~$\mathfrak{B}$. 

    For the implication~\ref{item:fpe-2} to~\ref{item:fpe-1} note that for any inclusion-wise maximal set~$\mathcal{B}$ of disjoint dibonds in~$\mathfrak{B}$ the set~${F := \bigcup \mathcal{B}}$ is a finite $\mathfrak{B}$-dijoin of~$D$. 
    
    The implication from~\ref{item:fpe-2} to~\ref{item:fpe-3} is immediate, 
    even if~$\mathfrak{B}$ is not finitely corner-closed. 
    
    Finally, we assume statement~\ref{item:fpe-3} and that~$\mathfrak{B}$ is finitely corner-closed, and we prove statement~\ref{item:fpe-1}. 
    
    Suppose that for some finite set~$\mathcal{B} \subseteq \mathfrak{B}$ of pairwise disjoint and pairwise nested finite dibonds which is of maximum size there is some dibond~${A \in \mathfrak{B}}$ which is disjoint to each dibond in~$\mathcal{B}$. 
    Without loss of generality, let~$\mathcal{B}$ and~$A$ be chosen such that the set of dibonds in~$\mathcal{B}$ that cross~$A$ is of minimum size among all possible choices. 
    
    Let~${B \in \mathcal{B}}$ be chosen such that~$A$ and~$B$ cross and either~$\insh(B)$ (first case) or~$\outsh(B)$ (second case) is inclusion-minimal among all sides of the elements of~$\mathcal{B}$ that cross~$A$.
    
    In the first case we consider the dicut~${A \wedge B \in \mathfrak{B}^\oplus}$. 
    Note that since both~$A$ and~$B$ are dibonds, the out-shore of~${A \wedge B}$ induces a weakly connected digraph. 
    Hence an easy case analysis shows that any dibond in its decomposition into dibonds in~$\mathfrak{B}$ is nested with every dibond in~$\mathcal{B}$ as well as with each other. 
    In particular,~${A \wedge B}$ is a dibond in~$\mathfrak{B}$, since otherwise it would contradict the maximality of~$\mathcal{B}$. 
    Moreover, let~${A'}$ be any dibond appearing in the decomposition of~${A \vee B}$ into dibonds in~$\mathfrak{B}$. 
    As before, we can show that~$A'$ is nested with~${A \wedge B}$, as well as with any dibond in~${\mathcal{B}}$ which is nested with~$A$. 
    And since~${\mathcal{B}' := (\mathcal{B} \setminus \{ B \}) \cup \{ A \wedge B \}}$ is a set of pairwise disjoint dibonds in~$\mathfrak{B}$ and~$A'$ crosses strictly fewer dicuts in~$\mathcal{B}'$ than~$A$ crosses in~$\mathcal{B}$, the pair~$\mathcal{B}'$ and~$A'$ contradicts the choice of~$\mathcal{B}$ and~$A$. 
    In the second case the same argument works with the roles of~${A \wedge B}$ and~${A \vee B}$ reversed. 
    
    In any case, this contradicts the existence of such a set~$\mathcal{B}$ and such a dibond~$A$. 
    Therefore, for any finite set~$\mathcal{B} \subseteq \mathfrak{B}$ of pairwise disjoint and pairwise nested finite dibonds which is of maximum size the set~${\bigcup \mathcal{B}}$ is a finite $\mathfrak{B}$-dijoin.
\end{proof}

\vspace{0.2cm}

Given an edge set~${N \subseteq E(D)}$, let~${\mathfrak{B}\restricted{N}}$ denote the set~${\{ B \in \mathfrak{B} \, | \, B \subseteq N \}}$. 
Note that~$\mathfrak{B}\restricted{N}$ is a class of finite dibonds of the contraction minor~${D.N}$ and if~$\mathfrak{B}$ is finitely corner-closed, then so is~$\mathfrak{B}\restricted{N}$. 
The following lemma uses a standard compactness argument to show the existence of a (nested) optimal pair for~$D$ based on the existence of (nested) optimal pairs of bounded size for its finite contraction minors.

\begin{lemma}
    \label{lem:fin-parameter-compactness}
    Let~${n \in \mathbb{N}}$. 
    If for every finite~${N \subseteq E(D)}$ there is a (nested) ${\mathfrak{B}\restricted{N}}$-optimal pair~${(F_N, \mathcal{B}_N)}$ for~${D.N}$ with~${\abs{F_N} \leq n}$, then there is a (nested) $\mathfrak{B}$-optimal pair~${(F, \mathcal{B})}$ for~$D$. 
\end{lemma}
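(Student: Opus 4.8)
The plan is to apply the compactness principle (in the form used throughout Section~\ref{sec:cases}, cf.~\cite{diestel_buch}*{Appendix~A}) to a suitable family of finite choices. First I would fix, for every finite~${N \subseteq E(D)}$, the hypothesised $\mathfrak{B}\restricted{N}$-optimal pair~${(F_N, \mathcal{B}_N)}$ for~${D.N}$ with~${\abs{F_N} \leq n}$. The key observation is that each~$F_N$ is a subset of~$N$ of size at most~$n$ (it lies in~${\bigcup \mathcal{B}_N \subseteq N}$ and has bounded size by condition~(iv) together with~${\abs{F_N}\le n}$), so the ``data'' we want to extract a limit from is essentially a bounded-size edge set together with, for each of its at most~$n$ edges, the finite dibond of~$\mathcal{B}_N$ containing it. I would set up a tree (or a directed system indexed by the finite subsets of~$E(D)$ ordered by inclusion) whose nodes at level~$N$ record the pair~${(F_N, \{ B \cap N' \mid B \in \mathcal{B}_N\})}$ restricted to smaller~$N'$; König's Lemma / compactness then yields a coherent limit choice~$F \subseteq E(D)$ with~${\abs F \le n}$ and, for each~${e \in F}$, a set~${B_e \subseteq E(D)}$ which is the ``limit'' of the dibonds containing~$e$ in the finite pairs.

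The second step is to verify that~${(F, \mathcal{B})}$ with~${\mathcal{B} := \{ B_e \mid e \in F\}}$ is the desired (nested) $\mathfrak{B}$-optimal pair for~$D$. For each property one argues by a finiteness/locality reduction to the finite minors: a single finite dibond~${B \in \mathfrak{B}}$ of~$D$ is, by Lemma~\ref{lem:contr-cuts}, a dibond of~${D.N}$ for~${N \supseteq B}$, so ${F \cap B \neq \emptyset}$ follows from~${F_N \cap B \neq \emptyset}$ for large enough~$N$ together with the coherence of the~$F_N$ (this is where one must be slightly careful that~$F$ agrees with~$F_N$ on~$N$, which the compactness setup guarantees). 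That each~$B_e$ is a dibond of~$D$ in~$\mathfrak{B}$, that the~$B_e$ are pairwise disjoint, that~${F \subseteq \bigcup \mathcal{B}}$, that~${\abs{F \cap B_e} = 1}$, and — in the nested case — that the~$B_e$ are pairwise nested, are all statements involving only finitely many edges at a time (each~$B_e$ is finite), so each can be checked inside~${D.N}$ for~$N$ large enough to contain all the relevant edges, and then transferred to~$D$ by Lemma~\ref{lem:contr-cuts} (for being a dibond and for separating vertices) and Lemma~\ref{lem:contr-cuts}'s nestedness clause. The bound~${\abs F \le n}$ is what makes the limit object finite, hence makes all of these ``local'' verifications possible.

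The main obstacle I anticipate is purely bookkeeping rather than conceptual: setting up the compactness argument so that the finite pairs for different~$N$ are made to cohere, in particular so that the limit edge set~$F$ restricts correctly onto each~$N$ and so that the limit dibonds~$B_e$ are genuinely well-defined (each~$B_e$ should stabilise, i.e.\ equal~${B \cap N}$ for the relevant~${B \in \mathcal{B}_N}$ once~$N$ is large, which uses that~$B_e$ is finite). One has to choose the ``fibre'' over each finite~$N$ to be a finite set of admissible partial configurations — namely pairs consisting of a candidate finite $F' \subseteq N$ with~${\abs{F'}\le n}$ and an assignment ${e \mapsto (\text{a finite dibond of } D.N \text{ in } \mathfrak{B}\restricted N \text{ containing } e)}$ that is induced by restricting some~${(F_{N'},\mathcal{B}_{N'})}$ with~${N' \supseteq N}$ — and then check that these fibres are nonempty, finite, and compatible under restriction, so that the inverse limit is nonempty. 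Once that scaffolding is in place, the verification of properties (i)--(iv) and of nestedness is routine via Lemma~\ref{lem:contr-cuts}.
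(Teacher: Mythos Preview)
Your approach differs from the paper's in a way that creates a genuine gap, not just bookkeeping. You try to extract both~$F$ and the dibonds~${\mathcal{B} = \{B_e \mid e \in F\}}$ simultaneously from compactness; the paper instead fixes~$\mathcal{B}$ \emph{before} any compactness argument, as a maximal (nested) set of disjoint dibonds in~$\mathfrak{B}$, observes that~${\abs{\mathcal{B}} \leq n}$ (else some finite~$N$ would violate the hypothesis), and then compactifies only over the choice of~${F \subseteq \bigcup \mathcal{B}}$. The crucial point is that for any~${N \supseteq \bigcup \mathcal{B}}$ one has~${\abs{\mathcal{B}} = \abs{\mathcal{B}_N} = \abs{F_N}}$, so~${(F_N, \mathcal{B})}$ is itself a (nested) ${\mathfrak{B}\restricted N}$-optimal pair. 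Thus the inverse system lives entirely inside the \emph{fixed} finite set~${\bigcup \mathcal{B}}$, the restriction maps are identities, and the limit~$F$ automatically satisfies~${F \subseteq \bigcup \mathcal{B}}$ and~${\abs{F \cap B} = 1}$ for~${B \in \mathcal{B}}$.

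Your setup runs into trouble precisely where you flag it as ``bookkeeping''. There is no natural restriction map on pairs: if~${(F_{N'}, \mathcal{B}_{N'})}$ is optimal for~${D.N'}$ and~${N \subseteq N'}$, then neither~${F_{N'} \cap N}$ nor~${\{B \cap N \mid B \in \mathcal{B}_{N'}\}}$ need form part of a ${\mathfrak{B}\restricted N}$-optimal pair --- an intersection~${B \cap N}$ is typically not a dibond at all, let alone one in~$\mathfrak{B}$. Consequently your inverse system has no well-defined bonding maps, and the ``limit dibond''~$B_e$ is undefined. Your argument that~$B_e$ stabilises ``which uses that~$B_e$ is finite'' is circular: finiteness of~$B_e$ is exactly what you would need stabilisation to establish, and membership in~$\mathfrak{B}$ (an arbitrary class, not a locally checkable property) requires~$B_e$ to literally equal one of the finite-stage dibonds. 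Fixing~$\mathcal{B}$ in advance, as the paper does, is the missing idea that dissolves all of this.
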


\begin{proof}
    Let~$\mathcal{B}$ be a maximal (nested) set of disjoint dicuts in~$\mathfrak{B}$. 
    Note that~${\abs{\mathcal{B}} \leq n}$, since otherwise a subset~${\mathcal{B}' \subseteq \mathcal{B}}$ of size~${n+1}$ would contradict the assumption for~${N = \bigcup \mathcal{B}'}$. 
    
    Let~${N \subseteq E(D)}$ be a finite set of edges such that~${\bigcup \mathcal{B} \subseteq N}$ holds. 
    Since~${D.N}$ is a finite weakly connected digraph, 
    there exists a (nested) ${\mathfrak{B}\restricted{N}}$-optimal pair~${(F_N, \mathcal{B}_N)}$ for~${D.N}$ by assumption. 
    By the choice of~$N$ and Lemma~\ref{lem:contr-cuts} we know that each element of~$\mathcal{B}$ is also a finite dicut of~${D.N}$. 
    Furthermore, each finite dicut in~${D.N}$ is also one in~$D$ and, thus,~${\mathcal{B}_N}$ is a set of disjoint finite dicuts in~$D$. 
    Hence,~${\abs{\mathcal{B}} = \abs{\mathcal{B}_N} = \abs{F_N}}$. 
    Using that the elements in~$\mathcal{B}$ are pairwise disjoint (and nested) finite dicuts, we get that~${(F_N, \mathcal{B})}$ is a (nested) ${\mathfrak{B}\restricted{N}}$-optimal pair for~${D.N}$ as well.
    Given a finite edge set ${M \supseteq N}$ with a (nested) ${\mathfrak{B}\restricted{M}}$-optimal pair~${(F_M, \mathcal{B}_M)}$ for~${D.M}$ we obtain that~${(F_M, \mathcal{B})}$ is also a nested optimal pair for~${D.N}$. 
    
    Note that for any finite edge set~${N \subseteq E(D)}$ satisfying~${\bigcup \mathcal{B} \subseteq N}$ there are only finitely many possible edge sets~${F_N \subseteq \bigcup \mathcal{B}}$ such that~${(F_N, \mathcal{B})}$ is a (nested) ${\mathfrak{B}\restricted{N}}$-optimal pair for~${D.N}$.
    Hence, we get via the compactness principle an edge set~${F \subseteq \bigcup \mathcal{B}}$ with~${\abs{F \cap B} = 1}$ for every~${B \in \mathcal{B}}$ such that~${(F, \mathcal{B})}$ is a (nested) ${\mathfrak{B}\restricted{M}}$-optimal pair for~${D.M}$ for every finite edge set~${M \subseteq E(D)}$ satisfying~${\bigcup \mathcal{B} \subseteq M}$. 
    
    We claim that~${(F, \mathcal{B})}$ is a (nested) $\mathfrak{B}$-optimal pair for~$D$.
    We already know by definition that~$\mathcal{B}$ is a (nested) set of disjoint finite dicuts in~$\mathfrak{B}$ 
    and that~${F \subseteq \bigcup \mathcal{B}}$ with~${\abs{F \cap B} = 1}$ for every~${B \in \mathcal{B}}$. 
    It remains to check that~$F$ is a $\mathfrak{B}$-dijoin of~$D$. 
    So let~${B' \in \mathfrak{B}}$. 
    Then the set~${N' := B' \cup \bigcup \mathcal{B}}$ is also finite and~$B'$ is a finite dicut of~${D.N'}$.
    Since~${(F, \mathcal{B})}$ is also a nested optimal pair for~${D.N'}$, we know that~${F \cap B' \neq \emptyset}$ holds, which proves that~$F$ is a $\mathfrak{B}$-dijoin of~$D$. 
\end{proof}

Hence, Lemmas~\ref{lem:fin_parameter_equi} and~\ref{lem:fin-parameter-compactness} together with Theorem~\ref{thm:LY-fin-nested} yield Theorem~\ref{thm:LY-inf-cases}~\ref{item:LY-fin-dijoin},~\ref{item:LY-fin-disj-dicuts} and~\ref{item:LY-fin-nested-disj-dicuts}.

\subsection{Every edge lies in only finitely many dibonds and reductions to this case}

We continue with verifying another special case of Question~\ref{q:general-LY}.
The proof is also based on a compactness argument.
However, we need to choose the set up for the argument more carefully.

\begin{lemma}
    \label{L-Y-fin_many_fin_dibonds}
    Conjecture~\ref{conj:LY-inf-nested} holds for weakly connected digraphs in which every edge lies in only finitely many finite dibonds.
\end{lemma}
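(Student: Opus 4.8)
The plan is to use a compactness argument in the spirit of Lemma~\ref{lem:fin-parameter-compactness}, but set up more carefully since $D$ may now carry arbitrarily (even infinitely) many disjoint finite dibonds, so there is no uniform bound~$n$ on the size of the optimal pairs of the finite contraction minors. By the reductions of Section~\ref{sec:reduce} (Lemma~\ref{lem:fin-disep} and Lemma~\ref{lem:2-con-L-Y}, via Theorem~\ref{thm:reduction}) we may first assume $D$ is countable, finitely diseparable, and has $2$-connected underlying multigraph; in particular $E(D)$ is countable. Fix an enumeration $e_1, e_2, \dots$ of $E(D)$ and let $\mathfrak{B}_{\textnormal{fin}}$ be the class of finite dibonds, which is finitely corner-closed. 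For each $n$ let $N_n := e_1 \cup \dots \cup e_n$ and consider $D.N_n$, a finite weakly connected digraph; by Theorem~\ref{thm:LY-fin-nested} it admits a nested optimal pair $(F_n, \mathcal{B}_n)$ for the class $\mathfrak{B}_{\textnormal{fin}}\restricted{N_n}$ of finite dibonds contained in $N_n$. Since every edge of $D$ lies in only finitely many finite dibonds, the hypothesis is precisely what controls the local combinatorics: each edge $e_k$ is hit by a finite dibond in only finitely many ``ways'', so that the candidate dibonds through a fixed edge form a finite set that stabilises once $N_n$ is large enough.

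The key step is to organise these finite solutions into a single tree to which K\H{o}nig's Infinity Lemma applies. For each edge $e_k$, let $\mathcal{D}_k$ be the (finite, by hypothesis) set of finite dibonds of $D$ containing $e_k$. The idea is to build, for each $n$, a partial description consisting of: whether $e_k \in F$ for $k \le n$, and, for each edge $e_k \in F$ with $k \le n$, which dibond in $\mathcal{D}_k$ is the member of $\mathcal{B}$ containing $e_k$. Because each $\mathcal{D}_k$ is finite, there are only finitely many such partial descriptions of length $n$, and the restriction map gives a tree structure. One extracts from the finite nested optimal pairs $(F_n, \mathcal{B}_n)$ (after pulling them back to $D$ via Lemma~\ref{lem:contr-cuts}, which identifies finite dicuts of $D.N_n$ with finite dicuts of $D$ contained in $N_n$) a sequence of nodes in this tree that is cofinal, hence by compactness an infinite branch. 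This branch yields an edge set $F$ and, for each edge of $F$, a finite dibond containing it; let $\mathcal{B}$ be the set of these dibonds.

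It then remains to verify the four axioms of a nested $\mathfrak{B}_{\textnormal{fin}}$-optimal pair for $D$. That $F \subseteq \bigcup \mathcal{B}$ and $|F \cap B| = 1$ for $B \in \mathcal{B}$ should be arranged to hold by construction at every finite stage and hence in the limit; disjointness and nestedness of the members of $\mathcal{B}$ should likewise be forced to hold on every finite restriction, using that any two given dibonds $B, B' \in \mathcal{B}$ lie in $\mathcal{B}_n$ for $n$ large enough, where $(F_n, \mathcal{B}_n)$ is nested. The remaining point is that $F$ is a finitary dijoin: given a finite dibond $B'$ of $D$, every edge of $B'$ lies in only finitely many finite dibonds, and for $n$ large enough $B'$ is a dibond of $D.N_n$, so $F_n \cap B' \ne \emptyset$; one then needs the compactness extraction to have been done so that the chosen branch agrees with $F_n$ on $B'$, forcing $F \cap B' \ne \emptyset$.

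\textbf{Main obstacle.} The delicate part is exactly the bookkeeping that makes the Infinity Lemma applicable: the naive tree of finite partial solutions need not have finite levels, because a priori $F \cap N_n$ could meet many dibonds and $\mathcal{B}_n$ could be huge. The finiteness hypothesis must be leveraged not just to bound $\mathcal{D}_k$ for each fixed $k$, but to ensure that the relevant data at level $n$ is finite and that the finite-stage solutions $(F_n, \mathcal{B}_n)$ can be trimmed or normalised so that their restrictions genuinely live in a locally finite tree; equivalently, one must show the set of ``dibonds relevant at stage $n$'' stabilises. Handling this normalisation — and checking it is compatible with keeping the finite pairs nested and optimal — is where the real work lies; the rest is a routine limit argument.
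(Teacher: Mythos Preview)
Your approach is essentially the paper's: a compactness argument in which the hypothesis guarantees finiteness of the local data at each stage, with Theorem~\ref{thm:LY-fin-nested} supplying the finite-stage solutions. The paper indexes its inverse system by finite sets~$\mathcal{B}$ of finite dibonds (rather than by initial segments of an edge enumeration) and records at stage~$\mathcal{B}$ a ``nested pre-optimal pair'' $(F_\mathcal{B},\mathcal{B}')$ with $F_\mathcal{B} \subseteq \bigcup\mathcal{B}$ and $\mathcal{B}' \subseteq \hat{\mathcal{B}} := \bigcup_{e \in \bigcup\mathcal{B}} \mathcal{B}_e$, where~$\mathcal{B}_e$ is your~$\mathcal{D}_k$; finiteness of~$\hat{\mathcal{B}}$ makes the set of such pairs finite and lets the paper run the general compactness principle directly, without first reducing to countable~$D$. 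Your ``main obstacle'' dissolves on inspection: in your own setup the level-$n$ data lives in $\prod_{k \le n}\bigl(\{\textnormal{out}\}\cup\mathcal{D}_k\bigr)$, which is finite straight from the hypothesis, so the tree is already locally finite with no trimming or normalisation required, and the verification you sketch (each property being witnessed by some $(F_m,\mathcal{B}_m)$ with~$m$ large) goes through.
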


\begin{proof}
    Let~$D$ be a weakly connected digraph where every edge lies in only finitely many finite dibonds. 
    For an edge~${e \in E(D)}$ let~$\mathcal{B}_e$ denote the set of finite dibonds of~$D$ that contain~$e$. 
    Our assumption on~$D$ implies that~$\mathcal{B}_e$ is a finite set. 
    For a finite set~$\mathcal{B}$ of finite dibonds of~$D$ we define~${\hat{\mathcal{B}} = \bigcup \{ \mathcal{B}_e \, | \, e \in \bigcup \mathcal{B} \}}$. 
    Again our assumption on~$D$ implies that~$\hat{\mathcal{B}}$ is finite. 
    Note that~${\mathcal{B} \subseteq \hat{\mathcal{B}}}$ holds. 
    
    Given a finite set~$\mathcal{B}$ of finite dibonds of~$D$, we call~${(F_{\mathcal{B}}, \mathcal{B}')}$ a \emph{nested pre-optimal pair} 
    for~$\mathcal{B}$ 
    if the following hold: 
    \begin{enumerate}
        \item $F_{\mathcal{B}}$ intersects every element of~$\mathcal{B}$, 
        \item ${\mathcal{B}' \subseteq \hat{\mathcal{B}}}$, 
        \item the elements of~$\mathcal{B}'$ are pairwise nested and disjoint, 
        \item ${F_{\mathcal{B}} \subseteq \bigcup \mathcal{B}}$, and 
        \item ${\abs{F_{\mathcal{B}} \cap B'} = 1}$ for every~${B' \in \mathcal{B}'}$. 
    \end{enumerate}

    Now let~$\mathcal{B}_1$ and~$\mathcal{B}_2$ be two finite sets of finite dibonds of~$D$ with~${\mathcal{B}_1 \subseteq \mathcal{B}_2}$, and let~${(F_{\mathcal{B}_2}, \mathcal{B}_2')}$ be a nested pre-optimal pair  
    for~$\mathcal{B}_2$. 
    Then it is easy to check that 
    \[
        ( F_{\mathcal{B}_2}, \mathcal{B}'_2) \restricted \mathcal{B}_1 :=
        \Big( F_{\mathcal{B}_2} \cap \bigcup \mathcal{B}_1, 
        \big\{ B \in \mathcal{B}_2' \ |\ \abs{B \cap (F_{\mathcal{B}_2} \cap \bigcup \mathcal{B}_1)} = 1  \big\} \Big)
    \]
    is a nested pre-optimal pair 
    for~$\mathcal{B}_1$. 
    
    We know that for every finite set~$\mathcal{B}$ of finite dibonds of~$D$ there exists a nested pre-optimal pair 
    for~$\mathcal{B}$, since for a nested optimal pair~${(F_{\hat{\mathcal{B}}},\mathcal{B}')}$ for~${D.(\bigcup \hat{\mathcal{B}})}$, which exists by Theorem~\ref{thm:LY-fin-nested}, 
    we have that~${(F_\mathcal{B}, \mathcal{B}') \restricted \mathcal{B}}$
    is a nested pre-optimal pair 
    for~$\mathcal{B}$. 
    However, there can only be finitely many nested pre-optimal pairs 
    for~$\mathcal{B}$ 
    as both~${\bigcup \mathcal{B}}$ and~$\hat{\mathcal{B}}$ are finite. 
    
    Now we get by the compactness principle an edge set~${F'_D \subseteq E(D)}$ and a set~$\mathcal{B}_D$ of finite dibonds of~$D$ 
    such that
    for every finite set~$\mathcal{B}$ of finite dibonds of~$D$, we have that~${(F'_D, \mathcal{B}_D) \restricted \mathcal{B}}$
    is a nested pre-optimal pair 
    for~$\mathcal{B}$.
    Furthermore, 
    let~$F_D$ be the subset of~$F'_D$ consisting of all elements of~$F'_D$ that lie on a finite dibond of~$D$. 
    Note that for every finite set~$\mathcal{B}$ of finite dibonds of~$D$ the pair~$(F_D, \mathcal{B}_D) \restricted \mathcal{B}$ is still a nested pre-optimal pair 
    for~$\mathcal{B}$. 
    We claim that~${(F_D, \mathcal{B}_D)}$ is a nested optimal pair for~$D$. 
    
    First we verify that~$F_D$ is a finitary dijoin of~$D$. 
    Let~$B$ be any finite dibond of~$D$. 
    Then~$F_D$ meets~$B$, 
    because~${(F_D, \mathcal{B}_D) \restricted \{B\}}$ 
    is a nested pre-optimal pair for~${\{B\}}$. 
    So~$F_D$ is a finitary dijoin of~$D$. 
    
    Next consider any element~${e \in F_D}$. 
    By definition of~$F_D$ we know that~${e \in B_e}$ holds for some finite dibond~$B_e$ of~$D$. 
    Using again 
    that~${(F_D,\mathcal{B}_D) \restricted \{B_e\}}$
    is a nested pre-optimal pair for~${\{B_e\}}$, we get that~${e \in \bigcup \mathcal{B}_D}$. 
    So the inclusion~${F_D \subseteq \bigcup \mathcal{B}_D}$ is valid. 
    
    Given any~${B_D \in \mathcal{B}_D}$ we know 
    that~${(F_D, \mathcal{B}_D) \restricted \{B_D\}}$ 
    is a nested pre-optimal pair for~${\{B_D\}}$. 
    Hence,~${\abs{F_D \cap B} = 1}$ holds for every~${B \in \mathcal{B}_D}$.     
    Finally, let us consider two arbitrary but different elements~$B_1$ and~$B_2$ of~$\mathcal{B}_D$. 
    We know 
    that~${(F_D, \mathcal{B}_D) \restricted \{B_1, B_2\}}$ 
    is a nested pre-optimal pair for~${\{B_1, B_2\}}$. 
    Therefore,~$B_1$ and~$B_2$ are disjoint and nested. 
    This shows that~${(F_D, \mathcal{B}_D)}$ is a nested optimal pair for~$D$ and completes the proof of this lemma. 
\end{proof}

The next lemma can be used together with Lemma~\ref{L-Y-fin_many_fin_dibonds} to deduce that Conjecture~\ref{conj:LY-inf-nested} holds for weakly connected digraphs without infinite dibonds.

\begin{lemma}
    \label{lem:inf-many-fin-dibonds->inf-dicut}
    In a weakly connected digraph~$D$ where some edge~$e$ lies in infinitely many finite dibonds of~$D$ there is an infinite dibond containing~$e$.
\end{lemma}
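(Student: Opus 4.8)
Write $e = uv$ with tail $u$ and head $v$. Since all edges of a dibond have their head on its in-shore, a dibond $B$ contains $e$ if and only if $v \in \insh(B)$ and $u \in \outsh(B)$; and the in-shores of the dibonds of $D$ containing $e$ are exactly the sets $Y \subseteq V(D)$ with $v \in Y$, $u \notin Y$, $\delta^+(Y) = \emptyset$, $\delta^-(Y)$ finite, and both $D[Y]$ and $D[V(D) \setminus Y]$ weakly connected. Let $\mathcal{Y}$ be the collection of all these in-shores. Distinct dibonds have distinct in-shores, so $\mathcal{Y}$ is infinite by hypothesis. The plan is to produce the desired infinite dibond directly, from the two extremal members of $\mathcal{Y}$.

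\textbf{The two extremal in-shores.} Observe first that the family of all $Y$ with $v \in Y$, $u \notin Y$ and $\delta^+(Y) = \emptyset$ is closed under arbitrary unions and intersections (an edge leaving a union, resp.\ an intersection, would leave one of the members), so $Y_\cap := \bigcap \mathcal{Y}$ and $Y_\cup := \bigcup \mathcal{Y}$ again have these three properties; in particular $\delta^-(Y_\cap)$ and $\delta^-(Y_\cup)$ are non-empty dicuts of $D$ containing $e$. Moreover $D[Y_\cup]$ is weakly connected, being a union of the weakly connected subgraphs $D[Y]$ ($Y \in \mathcal{Y}$) all through $v$, and dually $D[V(D) \setminus Y_\cap]$ is weakly connected, being a union of the weakly connected subgraphs $D[V(D) \setminus Y]$ all through $u$. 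Replacing $Y_\cap$ by the weak component of $v$ in $D[Y_\cap]$, and $Y_\cup$ by the complement of the weak component of $u$ in $D[V(D) \setminus Y_\cup]$ — operations preserving all three defining properties, since the remaining weak components attach, through edges of $\delta^-(Y_\cap)$ resp.\ $\delta^-(Y_\cup)$, to the big one, and which by minimality resp.\ maximality change nothing once the relevant in-boundary is finite — we may assume that $B_\cap := \delta^-(Y_\cap)$ and $B_\cup := \delta^-(Y_\cup)$ are dicuts through $e$ both of whose shores are weakly connected, i.e.\ dibonds through $e$. \emph{If either $B_\cap$ or $B_\cup$ is infinite, it is the required infinite dibond and we are done.}

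\textbf{The remaining case.} Suppose $B_\cap$ and $B_\cup$ are both finite. Then $Y_\cap, Y_\cup \in \mathcal{Y}$, and they are the $\subseteq$-minimum and $\subseteq$-maximum of $\mathcal{Y}$. Put $Z := Y_\cup \setminus Y_\cap$. Since $\delta^+(Y_\cap) = \emptyset$ every edge between $Y_\cap$ and $Z$ points into $Y_\cap$ and so lies in $B_\cap$; since $\delta^+(Y_\cup) = \emptyset$ every edge between $Z$ and $V(D) \setminus Y_\cup$ points into $Z$ and so lies in $B_\cup$; and there are no edges between distinct weak components of $D[Z]$. As $D$ is weakly connected, each weak component of $D[Z]$ is incident with an edge of the finite set $B_\cap \cup B_\cup$, so $D[Z]$ has only finitely many weak components. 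Every $Y \in \mathcal{Y}$ satisfies $Y_\cap \subseteq Y \subseteq Y_\cup$, so $Y$ is the disjoint union of $Y_\cap$ with $S_Y := Y \cap Z$, where $S_Y$ meets each weak component of $D[Z]$ in a set closed under out-edges; and the requirements that $D[Y]$ and $D[V(D) \setminus Y]$ be weakly connected translate, inside each weak component $C$ of $D[Z]$, into: every weak component of $D[S_Y \cap C]$ contains a tail of an edge of $B_\cap$, and every weak component of $D[C \setminus S_Y]$ contains a head of an edge of $B_\cup$. I would then argue that, for each fixed $C$, only finitely many restrictions $S_Y \cap C$ can occur; since there are finitely many $C$, this makes $\mathcal{Y}$ finite, the desired contradiction.

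\textbf{Main obstacle.} The crux is this last finiteness statement. If some weak component $C$ of $D[Z]$ admitted infinitely many such restrictions, I would extract from them a strictly increasing (or strictly decreasing) chain; its union $S_\infty$ is again closed under out-edges inside $C$, and I would show that then some weak component of $D[C \setminus S_\infty]$ (resp.\ of $D[S_\infty]$) is an infinite set avoiding every head of an edge of $B_\cup$ (resp.\ every tail of an edge of $B_\cap$) — precisely where the finiteness of $B_\cap, B_\cup$ together with the already established weak connectedness of both shores of $B_\cap$ and $B_\cup$ is used. That infinite weak component would then witness that $D[V(D) \setminus Y]$ (resp.\ $D[Y]$) is disconnected for the chain members $Y$ of large index, a contradiction. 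I expect this extraction-and-limit step, with its bookkeeping of which boundary edges a growing weak component can ``see'', to be the only delicate point; everything else is the routine closure and connectivity bookkeeping sketched above.
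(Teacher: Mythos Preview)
Your approach is genuinely different from the paper's compactness argument, and the first two steps are correct: the sets $Y_\cap$ and $Y_\cup$ behave as you describe, and if either of $B_\cap$, $B_\cup$ is infinite you do obtain an infinite dibond through $e$. The problem lies entirely in the remaining case. It is \emph{not} true that $B_\cap$ and $B_\cup$ both being finite forces $\mathcal{Y}$ to be finite, so the contradiction you are aiming for cannot be reached in general. The ``extract a monotone chain'' step is not just technically incomplete --- the conclusion it is meant to yield is false.

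Here is a concrete counterexample. Take the digraph $D_1$ from Example~\ref{ex:zackenstrahl} (vertices $r$, $a_i$, $b_i$ for $i\in\NN$; edges $a_ib_i$, $a_ib_{i+1}$, $b_ir$) and adjoin two new vertices $u,v$ with three new edges $e:=uv$, $ua_0$ and $b_0v$. Then $Y_\cap=\{v\}$ with $B_\cap=\{uv,b_0v\}$, and $Y_\cup=V(D)\setminus\{u\}$ with $B_\cup=\{uv,ua_0\}$; both are finite dibonds. Nevertheless $\mathcal{Y}$ is infinite: for every $k\geq 1$ the set
\[
Y'_k \;:=\; \{v,r,b_0\}\cup\{a_i,b_i : i\geq k\}
\]
is the in-shore of a finite dibond through $e$ (of size $k+2$ for $k\geq 2$). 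So your step~3 cannot succeed. The infinite dibond that the lemma promises has in-shore $\{v,r,b_0\}=\bigcap_{k\geq 1}Y'_k$, which sits strictly between $Y_\cap$ and $Y_\cup$; by intersecting over \emph{all} of $\mathcal{Y}$ --- in particular over $Y_\cap=\{v\}$ itself --- you overshoot it. The paper instead builds the desired in-shore by a compactness argument over finite vertex sets, which sidesteps this difficulty entirely.
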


\begin{proof}
   We construct with a compactness argument a dibond containing~${e =: vw}$ that is distinct from every finite dibond. 
   
   Let~${W \subseteq V(D)}$ be finite with~${v,w \in W}$. 
   Consider the set~$\mathcal{B}_W$ consisting of those bipartitions~${(A,B)}$ of~$W$ with~${v \in A}$ and~${w \in B}$ such that~${\overrightarrow{E}(B,A)}$ is empty, but~${\overrightarrow{E}(A,B)}$ contains no finite dibond of~$D$. 
   Obviously,~$\mathcal{B}_W$ is finite. 
   For any dibond~${\overrightarrow{E}(X,Y)}$ containing~$e$ that is not contained in~${E(D[W])}$ the bipartition~${(X \cap W, Y \cap W)}$ is in~$\mathcal{B}_W$. 
   And since~$e$ lies in infinitely many dibonds, such a dibond always exists. 
   Moreover, for~${W \subseteq W'}$ and~${(A,B) \in \mathcal{B}_{W'}}$ we have~${(A \cap W, B \cap W) \in \mathcal{B}_{W}}$. 
   Hence by compactness there is a bipartition~$(A,B)$ of~$V(D)$ such that~${(A \cap W, B \cap W) \in \mathcal{B}_W}$ for every finite~${W \subseteq V(D)}$ with~${v,w \in W}$. 
   Now~${\overrightarrow{E}(A,B)}$ is a dicut of~$D$ which does not contain any finite dibond of~$D$, since these properties would already be witnessed for some finite~${W \subseteq V(D)}$. 
   Therefore,~${\overrightarrow{E}(A,B)}$ is an infinite dicut of~$D$ containing only infinite dibonds of~$D$. 
\end{proof}

As noted before, we obtain the following corollary.

\begin{corollary}\label{cor:LY-no_inf-dibond}
    Conjecture~\ref{conj:LY-inf-nested} holds for weakly connected digraphs without infinite dibonds.\qed
\end{corollary}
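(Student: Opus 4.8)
The plan is to deduce this directly from Lemma~\ref{L-Y-fin_many_fin_dibonds} together with Lemma~\ref{lem:inf-many-fin-dibonds->inf-dicut}, so the proof will be essentially a one-line corollary. Let~$D$ be a weakly connected digraph without infinite dibonds. First I would take the contrapositive of Lemma~\ref{lem:inf-many-fin-dibonds->inf-dicut}: that lemma says that whenever some edge~$e$ of~$D$ lies in infinitely many finite dibonds, there is an infinite dibond of~$D$ containing~$e$. Since~$D$ has no infinite dibond at all, it follows that no edge of~$D$ can lie in infinitely many finite dibonds; that is, every edge of~$D$ lies in only finitely many finite dibonds.

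Now I would simply invoke Lemma~\ref{L-Y-fin_many_fin_dibonds}, which asserts that Conjecture~\ref{conj:LY-inf-nested} holds for every weakly connected digraph in which every edge lies in only finitely many finite dibonds. Applying it to~$D$ yields a nested optimal pair for~$D$, which is exactly what the corollary claims.

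There is no genuine obstacle here: all the substantive work --- the careful compactness set-up over nested pre-optimal pairs in the proof of Lemma~\ref{L-Y-fin_many_fin_dibonds}, and the separate compactness argument producing an infinite dibond in Lemma~\ref{lem:inf-many-fin-dibonds->inf-dicut} --- has already been carried out, and the corollary only records that these two results dovetail.
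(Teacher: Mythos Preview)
Your proposal is correct and matches the paper's approach exactly: the corollary is stated with a \qed and the paper explicitly remarks just before it that Lemma~\ref{lem:inf-many-fin-dibonds->inf-dicut} together with Lemma~\ref{L-Y-fin_many_fin_dibonds} yields the result. Your contrapositive reading of Lemma~\ref{lem:inf-many-fin-dibonds->inf-dicut} followed by an application of Lemma~\ref{L-Y-fin_many_fin_dibonds} is precisely the intended deduction.
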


We close this section with a last special case for which we can verify Conjecture~\ref{conj:LY-inf-nested}.

\begin{corollary}
    \label{cor:LY-rayless}
    Conjecture~\ref{conj:LY-inf-nested} holds for rayless weakly connected digraphs.
\end{corollary}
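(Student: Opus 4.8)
The plan is to reduce to the situation already settled in Corollary~\ref{cor:LY-no_inf-dibond}, namely digraphs without infinite dibonds, by passing to the quotient in which vertices that cannot be separated by a finite dibond are identified. So let $D$ be a rayless weakly connected digraph, let $\mathfrak{B}_\textnormal{fin}$ be its class of finite dibonds, write $\sim$ for the relation $\equiv_{\mathfrak{B}_\textnormal{fin}}$, and set $D' := D{/}{\sim}$.

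First I would collect the relevant properties of $D'$. By Proposition~\ref{prop:rayless-quotient}, $D'$ is again rayless; by Proposition~\ref{prop:quotient} it is weakly connected and $\mathfrak{B}_\textnormal{fin}$-separable, hence in particular finitely separable, so that $\Un(D')$ is a finitely separable rayless multigraph. Then Corollary~\ref{cor:fin-sep+rayless=no_inf-bond} shows that $\Un(D')$ has no infinite bond at all, and since every dibond is a bond, $D'$ has no infinite dibond. Consequently Corollary~\ref{cor:LY-no_inf-dibond} applies and produces a nested optimal pair $(F,\mathcal{B})$ for $D'$.

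It remains to transport this pair back to $D$. The point to check is that the finite dibonds of $D'$ are exactly the members of $\mathfrak{B}_\textnormal{fin}$, so that a nested optimal pair for $D'$ is the same thing as a nested $\mathfrak{B}_\textnormal{fin}$-optimal pair for $D'$: this follows from Proposition~\ref{prop:quotient}\ref{item:quo-dicut-down}--\ref{item:quo-dicut-up} (every element of $\mathfrak{B}_\textnormal{fin}$ is still a dibond of $D'$, and conversely every finite dicut of $D'$ is a finite dicut of $D$, which decomposes into finite dibonds of $D$), exactly as in the discussion preceding Remark~\ref{rem:rayless-double-quotient}. With this identification, Lemma~\ref{lem:fin-disep} applied with $\mathfrak{B} = \mathfrak{B}_\textnormal{fin}$ turns $(F,\mathcal{B})$ into a nested $\mathfrak{B}_\textnormal{fin}$-optimal pair for $D$, i.e.~a nested optimal pair for $D$ in the sense of the introduction.

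I do not anticipate a serious obstacle here: the genuinely delicate ingredient is the preservation of raylessness under the quotient, and that is already isolated in Proposition~\ref{prop:rayless-quotient}. The only care needed within this corollary is the bookkeeping of which digraph's finite dibonds are in play when invoking Corollary~\ref{cor:LY-no_inf-dibond} and Lemma~\ref{lem:fin-disep}; ensuring these classes coincide along $D \to D{/}{\sim}$ is the crux of the argument sketched above.
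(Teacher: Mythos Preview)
Your proposal is correct and follows essentially the same route as the paper's proof: pass to the quotient $D{/}{\sim}$, use Proposition~\ref{prop:rayless-quotient} and Proposition~\ref{prop:quotient} to see it is rayless and finitely (di)separable, invoke Corollary~\ref{cor:fin-sep+rayless=no_inf-bond} and then Corollary~\ref{cor:LY-no_inf-dibond}, and transfer the nested optimal pair back via Lemma~\ref{lem:fin-disep}. Your extra paragraph verifying that the finite dibonds of $D'$ coincide with~$\mathfrak{B}_\textnormal{fin}$ makes explicit a point the paper leaves implicit when applying Lemma~\ref{lem:fin-disep}, but the argument is otherwise the same.
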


\begin{proof}
    Let~$D$ be a rayless weakly connected digraph.
    We know by Proposition~\ref{prop:rayless-quotient} that~${D{/}{\equiv_{\mathfrak{B}_\textnormal{fin}}}}$ is rayless as well, and by Proposition~\ref{prop:quotient} that~${D{/}{\equiv_{\mathfrak{B}_\textnormal{fin}}}}$ is weakly connected and finitely diseparable.
    So we obtain from Corollary~\ref{cor:fin-sep+rayless=no_inf-bond} that~${D{/}{\equiv_{\mathfrak{B}_\textnormal{fin}}}}$ has no infinite dibond.
    Now Corollary~\ref{cor:LY-no_inf-dibond} implies that Conjecture~\ref{conj:LY-inf-nested} is true in the digraph~${D{/}{\equiv_{\mathfrak{B}_\textnormal{fin}}}}$.
    Using again that~${D{/}{\equiv_{\mathfrak{B}_\textnormal{fin}}}}$ is finitely diseparable, any nested optimal pair for~${D{/}{\equiv_{\mathfrak{B}_\textnormal{fin}}}}$ directly translates to one for~$D$ by Lemma~\ref{lem:fin-disep}.
    Hence, Conjecture~\ref{conj:LY-inf-nested} is true for~$D$ as well.
\end{proof}

\section{A matching problem about infinite hypergraphs}
\label{sec:hypergraphs}

In this section we discuss how Conjecture~\ref{conj:LY-inf} is related to more general questions about infinite hypergraphs, where the initial one was posed by Aharoni.
We shall give an example, which then negatively answers Aharoni's original question.
However, we leave a modified version as a conjecture which then is still open.
Then we shall strengthen the latter conjecture to obtain a new one, which is closely related to Conjecture~\ref{conj:LY-inf} and the infinite version of Menger's Theorem.
Before we can do this we have to give some definitions and set notation.

Let us fix a hypergraph~${\mathcal{H} = (\mathcal{V}, \mathcal{E})}$. 
We call~$\mathcal{H}$ \textit{simple}, if no hyperedge is contained in another one. 
Given a set~${F \subseteq \mathcal{E}}$, we shall write~${\mathcal{H}[F]}$ for the hypergraph~${(\bigcup F, F)}$ and call it a \textit{subhypergraph of~$\mathcal{H}$}.
Moreover, a subhypergraph~$\mathcal{K}$ of~$\mathcal{H}$ is called \textit{finite}, if there exists some finite~${F \subseteq \mathcal{E}}$ such that~${\mathcal{K} = \mathcal{H}[F]}$. 
Note that~$\mathcal{K}$ might have infinitely many vertices since a hyperedge can contain infinitely many vertices.
We call~$\mathcal{H}$ \textit{locally finite} if each vertex of~$\mathcal{H}$ lies in only finitely many hyperedges.
Furthermore, we say that~$\mathcal{H}$ has \textit{finite character} if no hyperedge of~$\mathcal{H}$ contains infinitely many vertices.

A set of hyperedges~${\mathcal{M} \subseteq \mathcal{E}}$ is called a \textit{matching of~$\mathcal{H}$} if any two hyperedges in~$\mathcal{M}$ are pairwise disjoint. 
A set of vertices~${A \subseteq \mathcal{V}}$ is called a \textit{cover of~$\mathcal{H}$} if every hyperedge of~$\mathcal{H}$ contains a vertex from~$A$.
Now the hypergraph~$\mathcal{H}$ is said to have the \textit{K\H{o}nig property} if a pair~${(\mathcal{M}, A)}$ exists such that the following statements hold:
\begin{enumerate}
    \item $\mathcal{M}$ is a matching of~$\mathcal{H}$.
    \item $A$ is a cover of~$\mathcal{H}$.
    \item ${A \subseteq \bigcup \mathcal{M}}$.
    \item ${\abs{M \cap A} = 1}$ for every~${M \in \mathcal{M}}$. 
\end{enumerate}
We call such a pair~${(\mathcal{M}, A)}$ an \textit{optimal pair for~$\mathcal{H}$}.

Now we are able to state the original problem on infinite hypergraphs posted by Aharoni.

\begin{problem}
    \label{prob:aharoni}
    \cite{aharoni-conj}*{Prob.~6.7}
    Let~$\mathcal{H}$ be a hypergraph and suppose that every finite subhypergraph of~$\mathcal{H}$ has the K\H{o}nig property.
    Does then~$\mathcal{H}$ have the K\H{o}nig property?
\end{problem}

We shall now point out that, in full generality, this problem has a negative answer by stating a certain infinite hypergraph~$\mathcal{H}$.
For this, consider the digraph in Figure~\ref{fig:counterexample}, call it~$D$.
Let~$\mathcal{B}$ denote the set of all dicuts of~$D$.
We now define the hypergraph~$\mathcal{H} = (\mathcal{V}, \mathcal{E})$ by setting~${\mathcal{V} = E(D)}$ and~${\mathcal{E} = \mathcal{B}}$.
As discussed in the introduction,~$\mathcal{H}$ does not have the K\H{o}nig property, since we cannot even find two disjoint hyperedges, but we need infinitely many vertices of~$\mathcal{V}$ to cover all hyperedges.
However, for every non-empty finite subset~$F$ of~$\mathcal{E}$ we can find one vertex of~$\mathcal{V}$ covering all hyperedges of~${(\mathcal{V}, F)}$.

As noticed in the introduction,~$\mathcal{H}$ does not have any finite hyperedges.
This motivates us to modify Problem~\ref{prob:aharoni} to include only hypergraphs of finite character.

\begin{conjecture}
    \label{conj:aharoni_fin_char}
    Let~$\mathcal{H}$ be a hypergraph of finite character and suppose that every finite subhypergraph of~$\mathcal{H}$ has the K\H{o}nig property.
    Then~$\mathcal{H}$ has the K\H{o}nig property.
\end{conjecture}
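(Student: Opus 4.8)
Since $\mathcal{H}$ of finite character with all finite subhypergraphs König should behave like the finite-dicut hypergraph of a digraph (a matching being a family of disjoint dicuts, a cover a dijoin), the natural plan mirrors the two-layer strategy the paper uses for Conjecture~\ref{conj:LY-inf}: first reduce to a countable situation, then split according to whether the matching number $\nu(\mathcal{H})$ is finite or infinite. For orientation, note that when $\mathcal{H}$ is the incidence hypergraph of a bipartite graph the statement is König's duality theorem, which holds for arbitrary (even uncountable) bipartite graphs with \emph{no} finiteness hypothesis; so the proof one is after must genuinely interpolate between that unconditional phenomenon and the finite-character/finite-König hypotheses, both of which are provably necessary for general hypergraphs.

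\emph{Reduction to the countable case.} I would argue it suffices to prove the theorem when $\mathcal{V}$ and $\mathcal{E}$ are countable, using the elementary-submodel chain technique that is standard for König-type statements in infinite matching theory (and is the analogue here of the quotient-and-$2$-block reduction behind Theorem~\ref{thm:reduction}). Take a continuous increasing chain $(N_\alpha)_{\alpha<\kappa}$ of elementary submodels of a large $H_\theta$ with $\mathcal{H}\in N_0$ and $\mathcal{V}\cup\mathcal{E}\subseteq\bigcup_\alpha N_\alpha$; finite character and ``every finite subhypergraph has the König property'' are absolute, hence inherited by each countable ``slice'' $\mathcal{H}_\alpha$ built from $N_{\alpha+1}\setminus N_\alpha$. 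Choosing optimal pairs for the slices coherently along the chain and taking unions should yield an optimal pair for $\mathcal{H}$. The bookkeeping obstacle here is that a hyperedge can straddle the chain, so ``slices'' have to be set up so that restricting a cover to a slice again covers it — this is where most of the care is needed.

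\emph{The bounded case.} If $\nu(\mathcal{H})=n<\infty$, fix a maximum matching $\mathcal{M}$; by finite character $V_0:=\bigcup\mathcal{M}$ is a finite vertex set, and by maximality $V_0$ is already a cover (an edge disjoint from all of $\mathcal{M}$ would enlarge the matching). It then remains to thin $V_0$ to a cover meeting each $M\in\mathcal{M}$ exactly once, which I expect to follow by a compactness argument in the spirit of Lemma~\ref{lem:fin-parameter-compactness}: over all finite windows $F\subseteq\mathcal{E}$ there are only finitely many candidate choices ``one vertex of $V_0$ per edge of $\mathcal{M}$'', so a Rado-selection limit produces a global one — provided every finite window admits such a private cover. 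Even this local step is not automatic (it needs the finite König property of $\mathcal{H}[F\cup\mathcal{M}]$ to be realigned with the \emph{fixed} matching $\mathcal{M}$, since $\mathcal{H}$ need not have any corner-closure analogue), but it should be manageable.

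\emph{The unbounded case and the main obstacle.} When $\nu(\mathcal{H})$ is infinite, compactness alone cannot succeed — the optimal pairs of finite subhypergraphs have diverging size, so there is no single profinite object to extract, and (as the counterexample to Problem~\ref{prob:aharoni} shows, with finite character being exactly the property meant to rescue us) covers may have to be ``spread out''. I would fix a maximal matching $\mathcal{M}=\{M_i : i\in I\}$, so $\bigcup\mathcal{M}$ covers $\mathcal{H}$, and recast the goal as a choice problem: pick $f(i)\in M_i$ so that $\{f(i):i\in I\}$ is still a cover — the remaining requirements $A\subseteq\bigcup\mathcal{M}$ and $\lvert M_i\cap A\rvert=1$ then hold for free since the $M_i$ are disjoint. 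By the Rado selection lemma such $f$ exists once every \emph{finite} set of hyperedges can be privately covered from finitely many of the $M_i$, and that local feasibility I would try to extract from the finite König hypothesis, possibly after first ``improving'' $\mathcal{M}$ by a transfinite exhaustion à la Lemma~\ref{L-Y-fin_many_fin_dibonds}, replacing badly-placed matching edges to make finite windows feasible. The hard part — and the reason this conjecture is in effect as open as Conjecture~\ref{conj:LY-inf} — is precisely this local step: a single hyperedge may meet infinitely many $M_i$ yet be globally forced to take its representative among them, and ruling out the resulting incompatibilities between finitely many hyperedges in a way that really uses both finite character and finite König-ness seems to require a new idea, most plausibly an adaptation of the ``waves and hindrances'' machinery underlying the Aharoni–Berger proof of the infinite Menger theorem~\cite{inf-menger}.
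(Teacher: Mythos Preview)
The statement you are attempting to prove is not a theorem in the paper but an open conjecture: the paper explicitly says that ``Variations of Problem~\ref{prob:aharoni}, particularly Conjecture~\ref{conj:aharoni_fin_char}, are very general problems about infinite hypergraphs and probably difficult to answer. Not much is known about them, not even partial answers.'' There is therefore no proof in the paper to compare your proposal against.

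Your proposal is, accordingly, not a proof but a research programme --- and you say as much yourself. In the reduction step you flag that hyperedges may straddle the elementary-submodel chain and that setting up slices correctly ``is where most of the care is needed'', without indicating how to do it. In the bounded case you need, for every finite $F\subseteq\mathcal{E}$, a cover of $\mathcal{H}[F\cup\mathcal{M}]$ that meets each edge of the \emph{fixed} matching $\mathcal{M}$ exactly once; the finite K\H{o}nig hypothesis only gives you \emph{some} optimal pair for that finite subhypergraph, and without any uncrossing or corner-closure structure there is no mechanism to realign it with $\mathcal{M}$ --- you call this ``manageable'' but give no argument. In the unbounded case you state outright that the local feasibility step ``seems to require a new idea'' and point to the Aharoni--Berger machinery as a possible source. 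Each of these is a genuine gap, not a routine omission; the last one in particular is the heart of the problem and is, as you correctly diagnose, at least as hard as Conjecture~\ref{conj:LY-inf} itself. What you have written is a reasonable map of where the difficulties lie, but it should be labelled as such rather than as a proof.
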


Variations of Problem~\ref{prob:aharoni}, particularly Conjecture~\ref{conj:aharoni_fin_char}, are very general problems about infinite hypergraphs and probably difficult to answer.
Not much is known about them, not even partial answers.
However, relaxed questions involving fractional matchings and covers have more successfully been studied, see~\cite{aharoni-conj}*{Section~6} for a brief survey on such results.

Now we modify Conjecture~\ref{conj:aharoni_fin_char} even further yielding the following stronger conjecture.

\begin{conjecture}
    \label{conj:aharoni_compact}
    Let~$\mathcal{H} = (\mathcal{V}, \mathcal{E})$ be a hypergraph of finite character and suppose that for every finite~$F \subseteq \mathcal{E}$ there exists some finite set~$F' \subseteq \mathcal{E}$ such that~$F \subseteq F'$ and~$(\mathcal{V}, F')$ has the K\H{o}nig property.
    Then~$\mathcal{H}$ has the K\H{o}nig property.
\end{conjecture}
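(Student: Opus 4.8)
The plan is to mimic the compactness argument in the proof of Lemma~\ref{L-Y-fin_many_fin_dibonds}, using that $\mathcal H$ has finite character to keep all relevant local data finite. Since every hyperedge is finite, fix, for each finite $F\subseteq\mathcal E$, a finite $F'\supseteq F$ together with an optimal pair $(\mathcal M_{F'},A_{F'})$ for $(\mathcal V,F')$, which exists by hypothesis. As in that lemma, I would introduce a restriction operation: given an optimal pair $(\mathcal M,A)$ for $(\mathcal V,F')$ and a finite $F\subseteq F'$, set
\[
    (\mathcal M,A)\restricted F := \Bigl(\,\bigl\{\, M\in\mathcal M \ \big|\ \abs{M\cap(A\cap{\textstyle\bigcup}F)}=1 \,\bigr\},\ A\cap{\textstyle\bigcup}F\,\Bigr),
\]
and check --- exactly as for the analogous restriction in the proof of Lemma~\ref{L-Y-fin_many_fin_dibonds} --- that this is a \emph{pre-optimal pair for $F$}: its second coordinate is contained in $\bigcup F$ and covers every edge of $F$, its first coordinate is a matching in the union of whose members the second coordinate lies, and each member of the first coordinate meets the second coordinate in exactly one vertex. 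Since $\mathcal H$ has finite character, $\bigcup F$ is finite, so for a fixed finite $F$ there are only finitely many candidates for the cover part $A\cap\bigcup F$.

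Next I would run the compactness principle over the family of finite subsets of $\mathcal E$, exactly as in the proofs of Lemmas~\ref{lem:fin-parameter-compactness} and~\ref{L-Y-fin_many_fin_dibonds}: regard, for each $e\in\mathcal E$, the ``state'' of $e$ --- either ``not in the matching'', or ``in the matching with chosen vertex $v$'' for some $v\in e$, finitely many options because $e$ is finite --- as a variable, let the finitely many pre-optimal pairs for each finite $F$ supply consistent pieces of finite information, and obtain in the limit a pair $(\mathcal M_{\mathcal H},A_{\mathcal H})$ whose restriction to every finite $F\subseteq\mathcal E$ is a pre-optimal pair for $F$. From this the conclusion follows routinely: $\mathcal M_{\mathcal H}$ is a matching (witnessed on pairs $F=\{e,e'\}$), we have $\abs{M\cap A_{\mathcal H}}=1$ for each $M\in\mathcal M_{\mathcal H}$ and $A_{\mathcal H}\subseteq\bigcup\mathcal M_{\mathcal H}$ (witnessed on singletons $F=\{e\}$), and $A_{\mathcal H}$ covers every $e\in\mathcal E$ (again witnessed on $F=\{e\}$); so $(\mathcal M_{\mathcal H},A_{\mathcal H})$ is an optimal pair for $\mathcal H$.

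The hard part --- and, I suspect, the reason this is only a conjecture --- is that this compactness does not actually close up without a local-finiteness assumption. In Lemma~\ref{L-Y-fin_many_fin_dibonds} the matching part of a pre-optimal pair for $F$ was confined to the \emph{finite} set $\hat{\mathcal B}$, which was finite precisely because each edge lay in only finitely many finite dibonds; here the matching edge that a pre-optimal pair uses to cover a given $e_0$ may be any hyperedge through one of the finitely many vertices of $e_0$, and there can be infinitely many such hyperedges. Equivalently, the covering requirement for $e_0$ --- ``some matching edge has its chosen vertex inside $e_0$'' --- is an infinitary disjunction over all hyperedges meeting $e_0$, hence an \emph{open} rather than closed condition in the natural product space, so a limit of finite approximations need not satisfy it, and the compactness principle (which handles only constraints on finitely many variables) does not apply. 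To push the argument through one would either have to assume $\mathcal H$ locally finite --- where the plan above works verbatim, giving a genuine theorem --- or first reduce the general case to the locally finite one, or replace the global compactness step by a transfinite or structural construction that keeps track of which matching edge is responsible for covering each hyperedge; doing any of these in full generality is precisely the open problem.
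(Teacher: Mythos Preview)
Your analysis is correct, and in fact there is nothing in the paper to compare it against: Conjecture~\ref{conj:aharoni_compact} is stated as an open conjecture, not as a theorem, and the paper offers no proof of it. The paper does exactly what you predict in your final paragraph --- it proves the locally finite special case (Lemma~\ref{lem:aharoni_loc_fin}) by the compactness argument you sketch, noting that the proof of Lemma~\ref{L-Y-fin_many_fin_dibonds} translates directly, and it leaves the general case open.

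Your diagnosis of the obstruction is also on target. The role of~$\hat{\mathcal B}$ in Lemma~\ref{L-Y-fin_many_fin_dibonds} is precisely to confine the matching side of a pre-optimal pair for~$F$ to a finite set of candidates, and this relies on each edge lying in only finitely many dibonds; without local finiteness of~$\mathcal H$ the analogous set of hyperedges meeting~$\bigcup F$ is infinite, the space of pre-optimal pairs for a fixed finite~$F$ is no longer finite, and the inverse-limit / compactness step breaks down exactly as you describe. So your proposal is not a proof of the conjecture --- nor does it claim to be --- but rather a correct identification of what the natural approach yields (the locally finite case) and where it stalls, which matches the paper's own treatment.
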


Although even stronger than Conjecture~\ref{conj:aharoni_fin_char}, this conjecture is very important, because it is closely related to the infinite version of Menger's Theorem and Conjecture~\ref{conj:LY-inf}.
In case Conjecture~\ref{conj:aharoni_compact} is verified, this would not only give another proof of the infinite version of Menger's Theorem proved by Aharoni and Berger~\cite{inf-menger} but also imply Conjecture~\ref{conj:LY-inf}.
The deductions are very similar in both of these cases; namely by defining a suitable auxiliary hypergraph.

For Menger's Theorem where an infinite graph~${G = (V, E)}$ is given as well as vertex sets~${A, B \subseteq V}$, we define an auxiliary hypergraph~${\mathcal{H}_{A, B} = (\mathcal{V}, \mathcal{E})}$ as follows.
The vertex set~$\mathcal{V}$ of~$\mathcal{H}$ consists precisely of those vertices of~$G$ that lie on at least one $A$--$B$~path in~$G$.
Now a subset~${F \subseteq \mathcal{V}}$ forms a hyperedge of~$\mathcal{H}$ if and only if~$F$ is the vertex set of an $A$--$B$~path in~$G$. 
For every finite set~$F$ of~$A$--$B$~paths consider the finite subgraph~$G_F$ induced by the vertex set spanned by the paths in~$F$.
Let~$F'$ be the set of all $A$--$B$~paths in~$G_F$.
Note that~$F'$ is a finite superset of~$F$, for which by Menger's Theorem ${(\mathcal{V}, F')}$ has the K\H{o}nig property. 
Hence, verifying Conjecture~\ref{conj:aharoni_compact} would imply the infinite version of Menger's Theorem.

With respect to the Infinite Lucchesi-Younger Conjecture, consider an infinite weakly connected digraph~${D = (V, E)}$.
We define an auxiliary hypergraph~${\mathcal{H}_{D} = (\mathcal{V}, \mathcal{E})}$ as follows.
We set~${\mathcal{V} = E}$.
Furthermore, a set~${B \subseteq E}$ forms a hyperedge of~$\mathcal{H}_{D}$ if and only if~$B$ is a dibond of~$D$.
Given a finite set~$F$ of dibonds of~$D$ we set~$F'$ to be the minimal finitely corner-closed set of dibonds containing~$F$. 
Note that~$F'$ is still a finite set. 
Now by Theorem~\ref{thm:LY-fin},
respectively~Theorem~\ref{thm:LY-fin-structural},
${(\mathcal{V}, F')}$ has the K\H{o}nig property.
So a positive answer to Conjecture~\ref{conj:aharoni_compact} would imply Conjecture~\ref{conj:LY-inf}. 

Now we conclude this section by translating some results based on compactness arguments of the previous section to yield also verified affirmative answers for special cases of Conjecture~\ref{conj:aharoni_compact}. 
Note first that a corresponding version of Lemma~\ref{lem:fin_parameter_equi} is true for hypergraphs as well:

\begin{lemma}\label{lem:fin_parameter_hyper}
    Let~$\mathcal{H}$ be a hypergraph of finite character.
    Then the following statements are equivalent:
    \begin{enumerate}[label=\textit{(\roman*)}]
        \item \label{item:hyper-1} $\mathcal{H}$ has a finite cover. 
        \item \label{item:hyper-2} The maximal size a matching of~$\mathcal{H}$ can have is finite. 
    \end{enumerate}
\end{lemma}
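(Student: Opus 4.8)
The plan is to follow the strategy of the proof of Lemma~\ref{lem:fin_parameter_equi}, which becomes considerably shorter in this setting since there is no statement analogous to \ref{item:fpe-3} and hence no uncrossing argument is required.

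First I would prove the implication from \ref{item:hyper-1} to \ref{item:hyper-2}. Fix a finite cover $A$ of $\mathcal{H}$ and let $\mathcal{M}$ be any matching of $\mathcal{H}$. Every hyperedge $M \in \mathcal{M}$ contains some vertex of $A$; selecting one such vertex for each $M$ yields a map $\mathcal{M} \to A$ that is injective because the elements of $\mathcal{M}$ are pairwise disjoint. Thus $\abs{\mathcal{M}} \leq \abs{A}$, so matchings of $\mathcal{H}$ have size at most $\abs{A}$.

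For the converse implication from \ref{item:hyper-2} to \ref{item:hyper-1}, let $n$ be the maximal size of a matching of $\mathcal{H}$ and choose a matching $\mathcal{M}$ with $\abs{\mathcal{M}} = n$. Then $\mathcal{M}$ is inclusion-wise maximal: were some hyperedge $e \in \mathcal{E}$ disjoint from every element of $\mathcal{M}$, the set $\mathcal{M} \cup \{ e \}$ would be a matching of size $n+1$, contradicting the choice of $n$. Hence every hyperedge of $\mathcal{H}$ meets some element of $\mathcal{M}$, so $A := \bigcup \mathcal{M}$ is a cover of $\mathcal{H}$. Finally, $A$ is finite: the set $\mathcal{M}$ is finite since $\abs{\mathcal{M}} = n$, and each of its finitely many elements has only finitely many vertices because $\mathcal{H}$ has finite character.

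I do not expect a genuine obstacle here; the only point that deserves care is the last finiteness check for $A = \bigcup \mathcal{M}$, which is exactly where the finite-character hypothesis is used. It cannot be dropped: if $\mathcal{V} = \mathbb{N}$ and the hyperedges are the tails $\{ m \in \mathbb{N} \, | \, m \geq n \}$ for $n \in \mathbb{N}$, then any two hyperedges intersect, so every matching has size at most $1$, yet no finite set of vertices covers all hyperedges.
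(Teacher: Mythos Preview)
Your proof is correct and follows exactly the approach the paper intends: the paper does not spell out a proof of this lemma but explicitly introduces it as the hypergraph analogue of Lemma~\ref{lem:fin_parameter_equi}, and your argument reproduces the \ref{item:fpe-1}\,$\Leftrightarrow$\,\ref{item:fpe-2} part of that proof verbatim, correctly identifying that the finite-character hypothesis is what makes $\bigcup \mathcal{M}$ finite in the backward direction.
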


Using Lemma~\ref{lem:fin_parameter_hyper} we can verify the following special case via the same compactness argument as used for Lemma~\ref{lem:fin-parameter-compactness}. 

\begin{lemma}\label{lem:aharoni_fin_parameter}
    Let~$\mathcal{H}$ be a hypergraph of finite character satisfying the premise of Conjecture~\ref{conj:aharoni_compact}. 
    Furthermore, let~$\mathcal{H}$ satisfy one of the following conditions: 
    \begin{enumerate}[label=\textit{(\roman*)}]
        \item $\mathcal{H}$ has a finite cover. 
        \item There is a finite maximal size a matching of~$\mathcal{H}$ can have.
    \end{enumerate}
    Then~$\mathcal{H}$ has the K\H{o}nig property. 
\end{lemma}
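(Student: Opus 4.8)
The plan is to mimic the proof of Lemma~\ref{lem:fin-parameter-compactness}, carrying everything over to the hypergraph setting via a compactness argument, and to reduce the size-of-the-matching to a fixed finite bound using Lemma~\ref{lem:fin_parameter_hyper}. So first I would observe that under either hypothesis~\textit{(i)} or~\textit{(ii)}, Lemma~\ref{lem:fin_parameter_hyper} tells us that $\mathcal{H}$ has a finite cover and that the maximal size of a matching of~$\mathcal{H}$ is some finite number, say~$n$. Since every matching of a finite subhypergraph $(\mathcal{V},F')$ is a matching of~$\mathcal{H}$, in particular every matching appearing in an optimal pair for such a subhypergraph has size at most~$n$.

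Next I would fix an inclusion-maximal matching $\mathcal{M}$ of~$\mathcal{H}$; by the above $\abs{\mathcal{M}} \le n$. The key point, exactly as in the proof of Lemma~\ref{lem:fin-parameter-compactness}, is that $\mathcal{M}$ itself will serve as the matching of the desired optimal pair, and compactness is only needed to produce the cover. Pick any finite $F_0 \subseteq \mathcal{E}$ with $\mathcal{M} \subseteq F_0$; by hypothesis there is a finite $F_0 \subseteq F' \subseteq \mathcal{E}$ such that $(\mathcal{V},F')$ has the K\H{o}nig property, and if $(\mathcal{M}', A')$ is an optimal pair for $(\mathcal{V},F')$, then $\abs{\mathcal{M}'} = \abs{A'} \le n$, while on the other hand $\mathcal{M} \subseteq F'$ forces $\abs{\mathcal{M}'} \ge \abs{\mathcal{M}}$ by maximality considerations; arguing as in Lemma~\ref{lem:fin-parameter-compactness} one gets that $(\mathcal{M}, A')$ is also an optimal pair for the subhypergraph spanned by $\mathcal{M}$ inside $(\mathcal{V},F')$, and the same persists when $F'$ is enlarged. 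Then for each finite $F \subseteq \mathcal{E}$ one considers $F \cup \mathcal{M}$, enlarges it to a finite $F''$ with the K\H{o}nig property, and extracts a set $A_F \subseteq \bigcup \mathcal{M}$ of representatives (one from each $M \in \mathcal{M}$) covering every hyperedge in $F$. Since $\bigcup \mathcal{M}$ is finite, there are only finitely many candidates for $A_F$, so the compactness principle yields a single finite set $A \subseteq \bigcup \mathcal{M}$ with $\abs{A \cap M} = 1$ for each $M \in \mathcal{M}$ such that $A$ covers every finite $F \subseteq \mathcal{E}$, hence every hyperedge of~$\mathcal{H}$. Then $(\mathcal{M}, A)$ is an optimal pair for~$\mathcal{H}$.

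I expect the only real subtlety — the ``main obstacle'' — to be the bookkeeping that makes $\mathcal{M}$ (rather than some varying $\mathcal{M}_F$) the fixed matching throughout: one must check that once $\mathcal{M} \subseteq F'$ and $(\mathcal{V},F')$ has the K\H{o}nig property with optimal pair $(\mathcal{M}', A')$, the pair $(\mathcal{M}, A')$ (after discarding from $A'$ the vertices not meeting $\bigcup\mathcal{M}$, and noting each $M \in \mathcal{M}$ still gets exactly one representative because $\abs{\mathcal{M}'}=\abs{A'}$ is squeezed between $\abs{\mathcal{M}}$ and $n$ and $\mathcal{M}$ is already maximal) is again an optimal pair for the relevant finite subhypergraph. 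This is precisely the analogue of the step in Lemma~\ref{lem:fin-parameter-compactness} where one replaces $\mathcal{B}_N$ by $\mathcal{B}$, and it goes through because matchings of finite subhypergraphs are matchings of~$\mathcal{H}$ and hence bounded by~$n$. Everything else — finiteness of the candidate set of covers, the compactness extraction, and verifying the four defining properties of an optimal pair — is routine and parallels Lemma~\ref{lem:fin-parameter-compactness} verbatim, with ``finite dicut'' replaced by ``hyperedge'', ``dijoin'' by ``cover'', and ``$D.N$'' by ``$(\mathcal{V}, F')$''.
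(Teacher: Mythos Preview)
Your approach is exactly what the paper intends: it states only that Lemma~\ref{lem:aharoni_fin_parameter} follows ``via the same compactness argument as used for Lemma~\ref{lem:fin-parameter-compactness}'', and your translation (fix a matching once and for all, then use compactness over the finite candidate covers inside $\bigcup\mathcal{M}$) is precisely that argument.

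One small slip to fix: you should take $\mathcal{M}$ to be a \emph{maximum-size} matching of $\mathcal{H}$ (which exists since the maximum is the finite number $n$), not merely an inclusion-maximal one. Your ``squeezing'' step needs $\abs{\mathcal{M}} = n$ so that $\abs{\mathcal{M}} \leq \abs{\mathcal{M}'} = \abs{A'} \leq n$ actually forces $\abs{A'} = \abs{\mathcal{M}}$; with a merely inclusion-maximal $\mathcal{M}$ you could have $\abs{A'} > \abs{\mathcal{M}}$, and then $A'$ need not pick exactly one vertex from each $M \in \mathcal{M}$. With that one-word change the argument goes through verbatim.
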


The other result from Section~\ref{sec:cases} we can lift to hypergraphs is Lemma~\ref{L-Y-fin_many_fin_dibonds}.
Again the proof depends on a compactness argument which can immediately be translated into the setting for hypergraphs.

\begin{lemma}\label{lem:aharoni_loc_fin}
     Let~$\mathcal{H}$ be a locally finite hypergraph of finite character satisfying the premise of Conjecture~\ref{conj:aharoni_compact}. 
     Then~$\mathcal{H}$ has the K\H{o}nig property. 
\end{lemma}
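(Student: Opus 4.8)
The plan is to mimic the proof of Lemma~\ref{L-Y-fin_many_fin_dibonds} in the hypergraph setting, replacing "finite dibond of $D$" everywhere by "hyperedge of $\mathcal{H}$" and using the premise of Conjecture~\ref{conj:aharoni_compact} in place of the appeal to Theorem~\ref{thm:LY-fin-nested}. Concretely, fix a locally finite hypergraph $\mathcal{H} = (\mathcal{V}, \mathcal{E})$ of finite character such that for every finite $F \subseteq \mathcal{E}$ there is a finite $F' \supseteq F$ with $(\mathcal{V}, F')$ having the K\H{o}nig property. For a vertex $v \in \mathcal{V}$ let $\mathcal{E}_v$ denote the (finite, by local finiteness) set of hyperedges through $v$, and for a finite $F \subseteq \mathcal{E}$ set $\hat{F} := \bigcup \{ \mathcal{E}_v \mid v \in \bigcup F \}$, which is finite since each hyperedge of $\hat F$ is a hyperedge of $\mathcal{H}$ and $\bigcup F$ is a finite vertex set (here finite character is what guarantees $\bigcup F$ is finite when $F$ is). Then apply the premise of Conjecture~\ref{conj:aharoni_compact} to $\hat F$ to obtain a finite $F'' \supseteq \hat F$ with $(\mathcal{V}, F'')$ having the K\H{o}nig property.

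Next I would define, for a finite $F \subseteq \mathcal{E}$, a \emph{pre-optimal pair for $F$} to be a pair $(A_F, \mathcal{M}')$ where $A_F \subseteq \bigcup F$ is a set of vertices meeting every hyperedge of $F$, $\mathcal{M}' \subseteq \hat F$ is a matching, $A_F \subseteq \bigcup \mathcal{M}'$, and $\lvert M \cap A_F \rvert = 1$ for every $M \in \mathcal{M}'$. Just as in Lemma~\ref{L-Y-fin_many_fin_dibonds}, one checks that pre-optimal pairs restrict along inclusions $F_1 \subseteq F_2$ (intersect $A_{F_2}$ with $\bigcup F_1$ and keep those $M \in \mathcal{M}'_2$ still hit exactly once), that a pre-optimal pair for $F$ exists whenever $(\mathcal{V}, F'')$ has the K\H{o}nig property with the optimal pair restricted down to $F$, and that there are only finitely many pre-optimal pairs for each fixed $F$ since both $\bigcup F$ and $\hat F$ are finite. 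A compactness argument over all finite $F \subseteq \mathcal{E}$ (ordered by inclusion) then yields a set $A' \subseteq \mathcal{V}$ and a set $\mathcal{M}$ of hyperedges of $\mathcal{H}$ such that $(A', \mathcal{M})$ restricts to a pre-optimal pair for every finite $F$; I then let $A$ be the set of those vertices of $A'$ lying on at least one hyperedge, and argue exactly as in Lemma~\ref{L-Y-fin_many_fin_dibonds} that $(A, \mathcal{M})$ is an optimal pair for $\mathcal{H}$: $A$ is a cover because restricting to $\{B\}$ shows $A$ meets every hyperedge $B$; $A \subseteq \bigcup \mathcal{M}$ because each $e \in A$ lies on some hyperedge $B_e$ and restricting to $\{B_e\}$ forces $e \in \bigcup \mathcal{M}$; and $\lvert M \cap A \rvert = 1$ for each $M \in \mathcal{M}$ and disjointness of distinct members of $\mathcal{M}$ follow from restricting to $\{M\}$ and to $\{M_1, M_2\}$ respectively.

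The main obstacle, as flagged in the sentence preceding the lemma in the text, is purely bookkeeping: one must verify that every step of the proof of Lemma~\ref{L-Y-fin_many_fin_dibonds} that used properties special to dibonds of digraphs — chiefly that a finite vertex set spans only finitely many dibonds, and that $\hat{\mathcal{B}}$ is finite — has an exact counterpart here, which it does, \emph{local finiteness} giving finiteness of each $\mathcal{E}_v$ and \emph{finite character} ensuring $\bigcup F$ is finite for finite $F$. Since there is no "nested" requirement on the hypergraph side, the proof is in fact strictly simpler than that of Lemma~\ref{L-Y-fin_many_fin_dibonds}, and no genuinely new idea is needed; the only care required is to state the restriction operation $(A', \mathcal{M}') \restricted F$ correctly and to check the compactness principle applies to the (finite, for each $F$) sets of pre-optimal pairs.
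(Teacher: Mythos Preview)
Your proposal is correct and matches the paper's intended argument exactly: the paper does not give a separate proof of Lemma~\ref{lem:aharoni_loc_fin} but simply states that the compactness argument from Lemma~\ref{L-Y-fin_many_fin_dibonds} translates directly to the hypergraph setting, which is precisely what you carry out. Your identification of the two key ingredients --- local finiteness for the finiteness of each~$\mathcal{E}_v$ and finite character for the finiteness of~$\bigcup F$ --- and your observation that the absence of a nestedness requirement makes the argument strictly simpler are both accurate; the one small point worth making explicit in the write-up is that any~$M \in \mathcal{M}'$ automatically lies in~$\hat F$ (since it contains a vertex of~$A_F \subseteq \bigcup F$), which is why the restriction from an optimal pair for~$(\mathcal{V}, F'')$ with~$F'' \supseteq \hat F$ lands in the right place.
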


\section*{Acknowledgements}
J. Pascal Gollin gratefully acknowledges support by the Institute for Basic Science, No. IBS-R029-C1. 

Karl Heuer was supported by a postdoc fellowship of the German Academic Exchange Service (DAAD) and partly by the European Research Council (ERC) under the European Union's Horizon 2020 research and innovation programme (ERC consolidator grant DISTRUCT, agreement No.\ 648527).

\begin{bibdiv}
\begin{biblist}

\bib{aharoni-conj}{article}{
   author={Aharoni, Ron},
   title={Infinite matching theory},
   note={Directions in infinite graph theory and combinatorics (Cambridge,
   1989)},
   journal={Discrete Math.},
   volume={95},
   date={1991},
   number={1-3},
   pages={5--22},
   issn={0012-365X},
   review={\MR{1141929}},
   doi={10.1016/0012-365X(91)90327-X},
}

\bib{inf-menger}{article}{
   author={Aharoni, Ron},
   author={Berger, Eli},
   title={Menger's theorem for infinite graphs},
   journal={Invent. Math.},
   volume={176},
   date={2009},
   number={1},
   pages={1--62},
   issn={0020-9910},
   review={\MR{2485879}},
   doi={10.1007/s00222-008-0157-3},
}

\bib{bang-jensen}{book}{
   author={Bang-Jensen, J\o rgen},
   author={Gutin, Gregory},
   title={Digraphs},
   series={Springer Monographs in Mathematics},
   edition={2},
   note={Theory, algorithms and applications},
   publisher={Springer-Verlag London, Ltd., London},
   date={2009},
   pages={xxii+795},
   isbn={978-1-84800-997-4},
   review={\MR{2472389}},
   doi={10.1007/978-1-84800-998-1},
}

\bib{diestel_buch}{book}{
   author={Diestel, Reinhard},
   title={Graph theory},
   series={Graduate Texts in Mathematics},
   volume={173},
   edition={5},
   publisher={Springer, Berlin},
   date={2018},
   pages={xviii+428},
   isbn={978-3-662-57560-4},
   isbn={978-3-662-53621-6},
   review={\MR{3822066}},
}

\bib{frank_book}{book}{
   author={Frank, Andr\'{a}s},
   title={Connections in combinatorial optimization},
   series={Oxford Lecture Series in Mathematics and its Applications},
   volume={38},
   publisher={Oxford University Press, Oxford},
   date={2011},
   pages={xxiv+639},
   isbn={978-0-19-920527-1},
   review={\MR{2848535}},
}

\bib{fujishige}{article}{
   author={Fujishige, Satoru},
   title={Algorithms for solving the independent-flow problems},
   language={English, with Japanese summary},
   journal={J. Oper. Res. Soc. Japan},
   volume={21},
   date={1978},
   number={2},
   pages={189--204},
   issn={0453-4514},
   review={\MR{0484341}},
   doi={10.15807/jorsj.21.189},
}

\bib{star-comb}{article}{
   author={Gollin, Pascal},
   author={Heuer, Karl},
   title={Characterising $k$-connected sets in infinite graphs},
   date={2018},
   eprint={1811.06411},
   note={Preprint},
}

\bib{inf-LY:2}{article}{
   author={Gollin, Pascal},
   author={Heuer, Karl},
   title={On the infinite Lucchesi-Younger Conjecture II},
   note={In preparation},
}

\bib{lovasz}{article}{
   author={Lov\'{a}sz, L\'{a}szl\'{o}},
   title={On two minimax theorems in graph},
   journal={J. Combinatorial Theory Ser. B},
   volume={21},
   date={1976},
   number={2},
   pages={96--103},
   review={\MR{0427138}},
   doi={10.1016/0095-8956(76)90049-6},
}

\bib{lucc-young_paper}{article}{
   author={Lucchesi, C. L.},
   author={Younger, D. H.},
   title={A minimax theorem for directed graphs},
   journal={J. London Math. Soc. (2)},
   volume={17},
   date={1978},
   number={3},
   pages={369--374},
   issn={0024-6107},
   review={\MR{500618}},
   doi={10.1112/jlms/s2-17.3.369},
}

\bib{zimmermann}{article}{
   author={Zimmermann, U.},
   title={Minimization of some nonlinear functions over polymatroidal
   network flows},
   conference={
      title={Bonn Workshop on Combinatorial Optimization},
      address={Bonn},
      date={1980},
   },
   book={
      series={Ann. Discrete Math.},
      volume={16},
      publisher={North-Holland, Amsterdam-New York},
   },
   date={1982},
   pages={287--309},
   review={\MR{686314}},
}

\end{biblist}
\end{bibdiv}

\end{document}